\def\Z{\mathbb{Z}}
\def\X{\mathbb{X}}
\def\O{\mathbb{O}}
\def\S{\mathbf{S}}
\def\R{\textbf{R}}
\def\x{\mathbf{x}}
\def\y{\mathbf{y}}
\def\z{\mathbf{z}}
\def\QQ {\mathcal{Q}}
\def\RR {\mathcal{R}}
\def\SS {\mathcal{S}}
\def\FF {\mathcal{F}}
\def\J {\mathcal{J}}
\def\I {\mathcal{I}}
\def\a{\mathfrak{a}}
\def\hh{\mathfrak{h}}
\def\b{\mathbf}
\def\t{\widetilde}
\def\h{\widehat}
\def\A{\alpha}
\def\B{\beta}
\def\{{\lbrace}
\def\}{\rbrace}
\theoremstyle{definition}
\newtheorem{theorem}{Theorem}[section]
\newtheorem{lemma}[theorem]{Lemma}
\newtheorem{proposition}[theorem]{Proposition}
\newtheorem{definition}[theorem]{Definition}
\newtheorem{remark}[theorem]{Remark}
\newtheorem{notation}[theorem]{Notation}
\numberwithin{equation}{section}
\newcommand{\exact}[3]{$0 \longrightarrow #1 \longrightarrow #2 \longrightarrow #3~\longrightarrow~0 $}
\begin{document}
\title
     [Combinatorial knot Floer homology and cyclic branched covers]
     {Combinatorial knot Floer homology\\ and cyclic branched covers}%
\author{Fatemeh Douroudian}%
\address{Department of Mathematics, Faculty of Mathematical Sciences, Tarbiat Modares University, P.O. Box 14115-137, Tehran, Iran.}%
\email{douroudian@modares.ac.ir}%

\author{Iman Setayesh}%
\address{Department of Mathematics, Faculty of Mathematical Sciences, Tarbiat Modares University, P.O. Box 14115-137, Tehran, Iran.}%
\email{setayesh@modares.ac.ir}%
\thanks{}%
\subjclass{}%
\keywords{}%
\begin{abstract}

Using a Heegaard diagram for the pullback of a knot $K \subset S^3$ in its cyclic branched cover $\Sigma_m(K)$ obtained from a grid diagram for $K$, we give a combinatorial proof for the invariance of the associated combinatorial knot Floer homology over $\Z$.

\end{abstract}
\maketitle

\section{Introduction} \label{intro}

Knot Floer homology is an invariant of knots and links in a three-manifold introduced by Ozsv\'ath and Szab\'o \cite{OS} and independently by Rasmussen \cite{Ras}. Computation of this invariant involves counting specific holomorphic disks in symmetric product
of a genus $g$ Heegaard surface. These computations can not be easily done by a computer. In \cite{MOS} Manolescu, Ozsv\'{a}th and Sarkar gave an algorithm that makes these calculations combinatorial. Their methods work for the homology with $\Z_2$-coefficients. 

Parallel to these advances, in \cite{Sarkar} Sarkar and Wang found a combinatorial method to compute the hat version of the Heegaard Floer homology of a three-manifold. Later Ozsv\'ath, Stipsicz and Szab\'o in \cite{nice} gave a combinatorial algorithm to reproduce $\widehat{HF}(Y)$ and gave a topological proof of its invariance properties. The above homology theories are with $\Z_2$-coefficients. 

Ozsv\'ath, Stipsicz and Szab\'o in \cite{Sign} give a general framework for the assignment of signs to bigons and rectangles (more precisely formal flows). They prove that using such sign assignments one can define a combinatorial Heegaard Floer Homology with coefficients in $\Z$ and show that the proof of the invariance of the homology in \cite{nice} lifts to this case. It is not known whether this homology and the Heegaard Floer homology over $\Z$ coincide.

Given a knot $K \subset S^3$ there  is a planer representation called the grid representation. Using such representation in \cite{MOST}, Manolescu, Ozsv{\'a}th, Szab\'o and Thurston gave a combinatorial proof for the invariance of the combinatorial knot Floer homology with coefficients in $\Z$. Levine \cite{L}  constructs a nice Heegaard diagram for the pullback of a knot $K \subset S^3$ in its cyclic branched cover. This construction allows him to compute the Heegaard Floer homology with coefficients in $\Z_2$. 

In this paper, using the sign assignment of \cite{Sign}, we lift the construction of Levine to obtain the combinatorial knot Floer homology of a knot in its cyclic branched cover with $\Z$-coefficients. We give a combinatorial proof of the invariance of this combinatorial knot Floer homology. Our arguments follow closely the proofs of \cite{D} by the first author. We have to modify certain steps to work for the general case of the cyclic branched cover. During our proof we are able to assign a sign to each $4m$-gon in the Heegaard diagram (see Definition~\ref{oct}). This is the main new ingredient in our proof. We prove that this sign assignments satisfies certain natural commutativity properties. This generalization can be used in future works on the combinatorial versions of the Heegaard Floer homology.

The following is the main theorem of this paper. For the definition of the stable combinatorial knot Floer homology see Definition~\ref{stablehomology}.

\begin{theorem}
The stable combinatorial knot Floer homology is an invariant of the knot.
\end{theorem}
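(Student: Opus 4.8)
The plan is to follow the standard template for invariance of grid-based combinatorial knot Floer homology from \cite{MOST}, but carried out one level up in the cyclic branched cover $\Sigma_m(K)$ following \cite{L} and \cite{D}, with every polygon count replaced by a signed count governed by Definition~\ref{oct}. By the classical theorem of Cromwell and Dynnikov, any two grid diagrams for the same knot $K \subset S^3$ are connected by a finite sequence of elementary grid moves --- cyclic permutation, commutation, and stabilization/destabilization --- so it suffices to prove that the stable combinatorial knot Floer homology of the pullback is unchanged under each move. Cyclic permutation is immediate: it merely relabels the curves of Levine's Heegaard diagram for $\Sigma_m(K)$ and hence induces an isomorphism of the signed complexes. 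The content is in the other two moves.

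For commutation, given $G$ and $G'$ differing by a column (or row) commutation, I would introduce Levine's interpolating Heegaard diagram in $\Sigma_m(K)$ carrying an extra $m$-tuple of curves, and define maps $\CFK(G) \to \CFK(G')$ and back by counting the pentagons $\lpent$, each weighted by its sign. Two things must then be checked. First, that these maps are chain maps: this is proved by analyzing the hexagon count $\hex$ and decomposing the boundary of each contributing hexagon into a rectangle piece and a pentagon piece, so that all terms cancel in signed pairs. Second, that the two composites are chain homotopic to the identity, again via a hexagon count, with the homotopy assembled from the degenerate polygons. The reason this is not a verbatim lift of \cite{D} is that in the $m$-fold cover the rectangles, pentagons and hexagons occurring in these identities acquire $4m$-gons in place of a degenerate corner, so one needs to know that the sign of Definition~\ref{oct} interacts with the rectangle and pentagon signs exactly in the configurations that arise --- this is the assertion that the $4m$-gon sign assignment satisfies the natural commutativity relations. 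Proving those relations, and verifying that every cancellation above is sign-respecting, is where I expect the main difficulty to lie.

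For stabilization, let $G'$ be a stabilization of $G$. I would analyze $\CFK(G')$ by splitting the generators according to their behaviour near the distinguished new row and column of Levine's diagram, exhibiting $\CFK(G')$ as the mapping cone of a map of subcomplexes; one of these is acyclic, and after cancelling it the cone is identified with $\CFK(G)$ tensored with the auxiliary factors that record the extra basepoints. Here too every map in the cone must be shown to respect the signs, using the relations among the $4m$-gon signs at the corners that degenerate near the stabilization region, which yields a signed filtered chain homotopy equivalence. Putting the three moves together shows that the homology, taken stably --- that is, modulo the tensor factors introduced by destabilization, as in Definition~\ref{stablehomology} --- does not depend on the chosen grid diagram, hence is an invariant of $K$. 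In short, the topological skeleton of the proof is dictated by \cite{MOST} and \cite{D}; the genuinely new work, and the main obstacle, is upgrading each mod-$2$ polygon identity to a signed identity, which both forces and relies upon the new compatibility properties of the $4m$-gon sign assignment.
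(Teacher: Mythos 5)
Your proposal matches the paper's overall skeleton -- invariance under Cromwell's three grid moves, pentagon/hexagon chain maps and homotopies for commutation, a filtered quasi-isomorphism for stabilization, and the $4m$-gon sign assignment as the genuinely new ingredient -- but it differs from the paper in one structurally important way, and it misplaces where the $4m$-gons actually occur. In the paper, $4m$-gons appear \emph{only} in the stabilization argument, not in commutation. For commutation the paper does not use a single interpolating diagram carrying all $m$ extra curves, as you suggest. Instead it introduces a chain of $m+1$ diagrams $E_0=\widetilde{G}, E_1, \dots, E_m=\widetilde{H}$, where $E_k$ and $E_{k+1}$ differ only in the $(k+1)$-st grid: a single lift of $\beta^{\mathrm{dist}}$ is moved at a time. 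Because each step is localized to one grid, the pentagon and hexagon counts go through essentially verbatim as in the double-cover argument of [D], with signs provided directly by the formal-rectangle sign assignment (dropping the small edges near the intersection $a$ or $b$ of $\gamma$ and $\xi$); no higher polygons arise, and the quasi-isomorphism from $C(\widetilde{G})$ to $C(\widetilde{H})$ is obtained by composing the $m$ local quasi-isomorphisms. Your one-step plan would indeed force you into the harder compound-polygon analysis you anticipate, but the paper's interpolating sequence is precisely the device that avoids it. Also note the paper's technical preliminary that the vertical-cut and horizontal-cut constructions $\widetilde{G}^v$ and $\widetilde{G}^h$ give isomorphic complexes -- this is used to switch between cutting along columns and along rows when proving row versus column moves, and your sketch omits it.

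On stabilization, your mapping-cone formulation is an acceptable repackaging of what the paper does explicitly: the map $F\colon C(\widetilde{H}) \to C(\widetilde{G})[1]\oplus C(\widetilde{G})$ built from Type $L$ and Type $R$ pseudo-domains is the cone map, and its being a quasi-isomorphism is proved via the $\mathcal{Q}$-filtration and Lemma~\ref{Homology}. The $4m$-gons, punctured rectangles and punctured hexagons, and the independence Lemma~\ref{indep-lemma} for the finger-move definition of the $4m$-gon sign are all part of this step. So the $4m$-gon sign compatibility that you correctly flag as the crux belongs to stabilization; conflating it with commutation risks a significantly more complicated and unnecessary argument there.
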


\textbf{Plan of the paper:} In Section \ref{prelim} we review the required backgrounds on grid diagrams, Levine's construction and the sign assignments. In section \ref{proofs} we prove that the stable combinatorial knot Floer homology is an invariant of the knot.

\section{Preliminaries}\label{prelim}

To be self-contained in this sections we review the construction of Levine together with the required results about the sign assignments of nice Heegaard diagrams. 

\subsection{Grid diagrams}
A grid diagram $G$ for a knot $K \subset S^3$ is an $n\times n$ planar grid equipped with two sets of markings $\X=\{ X \}_{i=1}^{i=n}$ and $\O=\{ O \}_{i=1}^{i=n}$, such that there is exactly one $X$ marking and one $O$ marking in each column and each row. The markings are placed such that after the following procedure we can retrieve the knot $K$ from the grid diagram $G$. We connect the $X$ and the $O$ in each column with a vertical segment, and we connect the $X$ and the $O$ in each row with a horizontal segment such that the horizontal segments underpass the vertical segments at any intersection. We view the grid diagram as a torus $T^2$ in $S^3$ after identifying the opposite sides of the grid. The horizontal arcs in the grid $G$ become the horizontal circles of the torus, and we denote them by $\A_1,\dots ,\A_n$ and call them alpha curves. We denote the vertical circles by $\B_1,\dots ,\B_n$ and call them the beta curves. The multi-pointed Heegaard diagram $(T^2, \bm\A=\{ \A \}_{i=1}^{i=n} ,\bm\B=\{ \B \}_{i=1}^{i=n},\X ,\O)$ represents $(S^3 , K)$. 

Let $\t K$ be the pullback of a given knot $K\subset S^3$ in the $m$-fold cyclic branched cover of $S^3$ branched along $K$ denoted by $\Sigma_m(K)$. Levine \cite{L} gave a construction for a Heegaard diagram of $(\Sigma_m(K) ,\t K)$ to compute $\h {HFK}( \Sigma_m(K) , \t K)$ over $\Z_2$. We apply the sign assignment introduced by Ozsv\'ath, Stipsicz and Szab\'o in \cite{Sign} for a nice Heegaard diagram, and provide a combinatorial proof of the invariance of the knot Floer homology of $( \Sigma_m(K) , \t K)$ over $\Z$.

\subsection{Levine's Construction}\label{Construction}

In order to be self-contained, we review the construction of a nice Heegaard diagram for $( \Sigma_m(K), \t K)$ given in \cite{L}. Let $G$ be a grid diagram for $K\subset S^3$. By identifying the opposite sides of $G$ we obtain a torus $T$. We may assume that $K$ and $T$ intersect transversely at the markings. The vertical segments of $K$ lie on one side of $T$ whch we call it \emph{the outside}, and the horizontal segments of $K$ lie on \emph{the inside} of $T$. Change $K$ isotopically such that the vertical segments of $K$ lie on $T$, and the rest of $K$ is inside $T$. $K$ has a Siefert surface $F$ that is contained in a ball inside $T$ (for details see \cite{L}). Change $K$ isotopically such that once again it becomes transverse to $T$. Now there are $n$ strips of $F$ outside the torus which intersect the torus in the vertical segments connecting $X$ and $O$ markings in each column. Using this Seifert surface, we form the $m$-fold cyclic branched cover of $S^3$ branched along $K$, i.e. $\Sigma_m(K)$. 
 
 The next step is to give a description of a Heegaard diagram for $\Sigma_m(K)$. We consider $m$ copies of $T$ and denote them by $T_1,\dots,T_{m}$. Let $\t T$ be the surface obtained by gluing $T_1,\dots,T_{m}$ along the branched cuts connecting $X$ and $O$ markings in each column. We glue different copies as follows. Whenever the $X$ marking is above (resp. under) the $O$ marking in a column, the left (resp. right) side of the branched cut that connects $X$ to $O$ in $T_k$ is glued to the right (resp. left) side of the same cut in $T_{k+1}$. Let $\pi : \t T \longrightarrow T$ denote the projection map. The map $\pi$ is an $m$-sheeted branched cover, with $2n$ branched points $X_i$ and $O_i$ for $i=1,\cdots,n$.
Each $\A$- and $\B$- curve on the grid diagram $G$ has $m$ distinct lifts to $\Sigma_m(K)$. 
Denote by $\t {\B^i_j}$ the lift of $\B_j$ in the $i^{th}$ grid
 for $i=1,\dots , m$ and $j=1,\dots , n$. The lift of $\A_j$ which has intersection with $\t {\B_1^i}$ is denoted by $\t \A^i_j$. We illustrate the Heegaard diagram $\t G= (\t T , \t{\bm\A}, \t{\bm\B}, \O, \X )$ by drawing $m$ grids as in Figure \ref{grid}.

\begin{figure}[h]
\centerline{\includegraphics[scale=0.6]{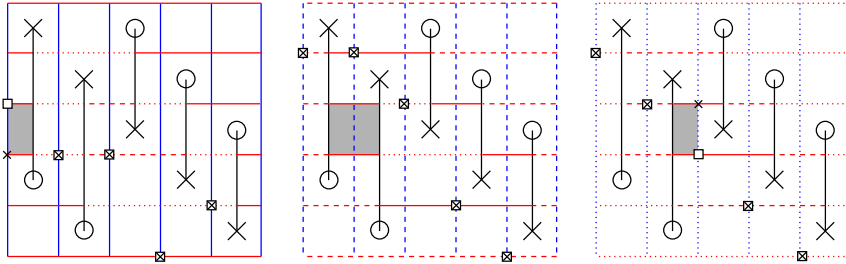}}
\caption {Here we illustrated a Heegaard diagram $\t G=(\t T, \bm{\t \alpha}, \bm{\t \beta}, \O, \X)$ for $\t K \subset \Sigma_2(K)$, where $K$ is the trefoil. The horizontal lines represent different lifts of $\A$-curves which are shown by solid lines, dashed lines and dotted lines.
The vertical lines represent lifts of $\B$-curves. 
Two generators $\x$ and $\y$, and a rectangle in $\pi(\x,\y)$ are shown in the figure. The generator $\x$ is shown with crosses and the generator $\y$ is shown with hollow squares.}
\label{grid}
\end{figure}

Let $\x$ be a subset of intersection points between $\A$- and $\B$-curves in $\t G$ such that each lift of $\A_i$ and each lift of $\B_j$ (for $i,j=1,\dots,n$) has exactly one element in $\x$ (thus $\x$ has $mn$ elements). We call such $\x$ a generator of $\t G$ and denote the set of all generators of $\t G$ by $\S(\t G)$. Each $\x$ can be decomposed (non-uniquely) as the union of the lifts of generators of the grid diagram $G$. That is $\x= \t\x_1\cup \dots \cup \t\x_m$, where $\x_1, \dots, \x_m$ are in $\S(G)$ the set of generators for the grid diagram $G$, see \cite{L}. 

There exists a grading on the set of generators of the Heegaard diagram $\t G$, which is called the Alexander grading. The Alexander grading is defined as follows.  
Given two finite sets of points $A$, $B$ in the plane, let $\J (A, B)$ be the number of pairs
$(a_1,a_2)\in A$ and $(b_1,b_2)\in B$ such that
$(b_1-a_1)(b_2-a_2) > 0$. Given $\x_i \in \S(G)$, define 
\begin{center}
$A(\x_i)=\dfrac{1}{2} \J(\x_i-\dfrac{1}{2}(\X+\O),\X-\O)- (\dfrac{n-1}{2})$.
\end{center}
Here we consider $\J$ as a bilinear function of its two variables. 

Given a decomposition for $\x \in \S(\t G)$ as $\x= \t\x_1\cup \dots \cup \t\x_m$, where $\x_i \in \S(G)$ for $i=1,\dots, m$, we define the Alexander grading of $\x$ to be the average of the Alexander gradings for the generators $\x_i \in \S(G)$ for $i=1,\dots, m$. It can be easily shown that this definition is independent of the chosen decomposition, hence it is well-defined.

A \emph{rectangle} $r$ is a topological disk whose upper and lower edges are arcs of alpha curves, and whose left and right edges are arcs of beta curves. We denote the set of rectangles in $\t G$ by $\RR(\t G)$. 

Given two generators $\x,\y \in \S(\t G)$ that differ in exactly two components, a rectangle $r\in \RR(\t G)$ is said to be from $\x$ to $\y$ if the upper right and lower left corners of $r$ be components of $\x$ and the other two corners of $r$ be components of $\y$. Let $\pi(\x,\y)$ be the set of rectangles from $\x$ to $\y$ that have no $\x$ component in their interiors.

Let $C(\t G)$ be the free abelian group generated by $\S(\t G)$.  We will make  $C(\t G)$ into a chain complex over $\Z$ by defining the boundary operator. In order to define the boundary operator we need to assign signs to rectangles. This is done in sub-section~\ref{comb}.


\subsection{Sign Assignment} \label{sign}

In order to assign signs to rectangles (and bigons created from isotopies that will appear later), we use the work of Ozsv\'ath, Stipsicz and Szab\'o in \cite{Sign}. The sign assignments are defined not only for rectangles and bigons (afterward called \emph{flows}) in a nice Heegaard diagram, but rather for formal rectangles and formal bigons. To be self-contained we review some aspects of their work.

\begin{definition}
Let $\bm \A$ and $\bm \B$ be two sets, such that $|\bm \A|=|\bm \B|=N$. A pairing of the two sets together with an assignment of $ \pm 1 $ to each intersection is called a \emph{formal generator} of \emph{power} $N$. Let $\sigma \in S_N$ denote the pairing and 
$\epsilon = (\epsilon_1, \dots , \epsilon_N) \in {\{ \pm 1 \} }^N$ be the signs assigned to intersections, we denote the formal generator by $(\epsilon , \sigma)$. 

Consider $2N$ oriented arcs $\A_1,\cdots,\A_N$ and $\B_1,\cdots,\B_N$ in the plane, such that for each $i$ the arc $\A_i$ intersects only $\B_{\sigma(i)}$ at a single point. The orientations of the arcs are in a way that the local intersection number of $\A_i$ and $\B_{\sigma(i)}$ is $\epsilon_i$. We call this configuration, \emph{a graphical representation} of the formal generator $(\epsilon , \sigma)$.

Note that given such oriented arcs we can recover the formal generator $\x$ from the local intersections. Also the classes of graphical representations under the action of the group of orientation preserving homomorphisms of the plane are in bijection with the formal generators. We denote the set of all formal generators of power $N$ by $\S_N$.
\end{definition}

\begin{definition}
Let $\x = (\epsilon , \sigma)$ and $\y = (\epsilon ' , \sigma ')$ be two formal generators of power $N$ satisfying the following properties.
\begin{itemize}
\item There exists $i_0 \in \{ 1, \dots , N \}$ such that $\epsilon_{i_0} = -\epsilon_{i_0} '$ and $\epsilon_j = \epsilon_j '$ for all $j\neq i_0$,
\item $\sigma = \sigma '$.
\end{itemize}
Given $2N$ arcs $\A_1,\cdots,\A_N$ and $\B_1,\cdots,\B_N$ in the plane, such that for each $j\neq i_0$ the arc $\A_j$ intersects only $\B_{\sigma(j)}$ at a single point, and  $\A_{i_0}$ intersects $\B_{\sigma({i_0})}$ in two points. 
We fix an orientation of the arcs such that the local intersection number of $\A_j$ and $\B_{\sigma(j)}$ is $\epsilon_j$ (for $j\neq i_0$). The intersection numbers of $\A_{i_0}$ and $\B_{\sigma({i_0})}$ will be $\epsilon_{i_0}$ and $\epsilon_{i_0} '$. Therefore if we consider a neighborhood of one intersection of $\A_{i_0}$ and $\B_{\sigma({i_0})}$ with the rest of the arcs we will get a graphical representation of $\x$, and using the other intersection point gives a graphical representation of $\y$. We may assume that no other arc intersects the bounded region between $\A_{i_0}$ and $\B_{\sigma({i_0})}$. We require that the induced orientation from the plane to $\A_{i_0}$ as the boundary of this bounded region is from the component of $\x$ to the component of $\y$. We have an action of the group of orientation preserving homomorphisms of the plane on such configurations. 

We call the class of this configuration (under the action), a \emph{formal bigon} of power $N$ from the formal generator $\x = (\epsilon , \sigma)$ to the formal generator $\y = (\epsilon ' , \sigma ')$. We denote such formal bigon by $\mathcal{B} :\x \rightarrow \y$, and call $\x$ (respectively $\y$) the initial (respectively terminal) point of the formal bigon $\mathcal{B}$.
\end{definition}

\begin{definition}\label{formalrec}
A \emph{formal rectangle} of power $N$ from the formal generator $\x = (\epsilon , \sigma)$ to the formal generator $\y = (\epsilon ' , \sigma ')$ (both of power $N$) is the following data.
\begin{itemize}
\item A choices of distinct $s,n \in \{ 1, \dots , N \} $ and distinct $e,w \in \{ 1, \dots , N \} $ such that $\sigma(s)=w$ , $\sigma(n)=e$, 
$\sigma'(s)=e$ , $\sigma '(n)=w$,
$\sigma(j)=\sigma '(j)$ for all $j\neq s, n$
and $\epsilon_j = \epsilon_j '$ for all $j\neq n, s$. 
\item Given $2N$ arcs $\A_1,\cdots,\A_N$ and $\B_1,\cdots,\B_N$ in the plane, such that for each $j\neq s,n$ the arc $\A_j$ intersects only $\B_{\sigma(j)}$ at a single point. Also $\A_{s}$ and $\A_n$ intersect $\B_{w}$ and $\B_e$ to form a rectangle, which we denote by $\R$.

We require an orientation on the arcs such that the local intersection of  the arc $\A_j$ with $\B_{\sigma(j)}$ is $\epsilon_i$ (for $j\neq s,n$ ). 
We also require the existence of an orientation for the four edges of the rectangle $\R$ ($\alpha_s$,$\alpha_n$,$\beta_w$ and $\beta_e$) such that the local intersection signs for the initial pairs of points $SW$ (intersection of $\alpha_s$ and $\beta_w$) and $NE$ coincide with the signs $\epsilon_s$ and $\epsilon_n$ respectively. Also the local intersection signs for the terminal pairs of points $SE$ and $NW$ coincide with $\epsilon'_s$ and $\epsilon'_n$ respectively. Note that this condition is equivalent to $\epsilon_s \epsilon_n \epsilon'_s \epsilon'_n =1$. Similarly we have an action of the group of orientation preserving homomorphisms of the plane on such configurations, and we only consider the class of such configuration.
\end{itemize}

We denote such formal rectangle by $\mathcal{R} :\x \rightarrow \y$, and to emphasis the edges also denote it by the sequence $\A_n \rightarrow \B_w \rightarrow \A_s \rightarrow \B_e$. We call this presentation the \emph{arrow notation}. Note that the data of $\sigma(j)$ and $\epsilon(j)$ for $j \neq n,s$, are not explicitly mentioned in the arrow notation and should be understood from the context. We realize the formal rectangles as embedded rectangles in a plane, and in this way we are able to talk about north, east, south and west edges. If we rotate the rectangle by $180$ degrees it does not change the local intersection numbers, and therefore the class of the configuration remains the same. Hence the arrow notation associated to a given formal rectangle can be shifted and it remains the same, see Figure~\ref{drawing3-1}. More precisely, we have 
$$(\A_n \rightarrow \B_w \rightarrow \A_s \rightarrow \B_e) = (\A_s \rightarrow \B_e \rightarrow \A_n \rightarrow \B_w) .$$
\end{definition}

\begin{figure}[h]
\centerline{\includegraphics[scale=0.6]{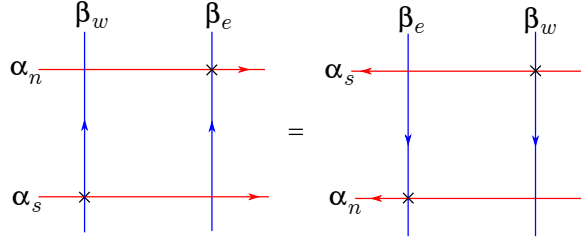}}
\caption{Two equivalent configurations.}
\label{drawing3-1}
\end{figure}

\begin{definition}
We call a formal bigon or a formal rectangle, a \emph{formal flow}. We denote a formal flow $\phi$ from a formal generator
$\x = (\epsilon , \sigma)$ to a formal generator $\y = (\epsilon ' , \sigma ')$ by $\phi :\x \rightarrow \y$.
Denote by $\FF_N$ the set of formal flows of power $N$, i.e. both the initial and the terminal formal generators are of power $N$. Let $\x = (\epsilon , \sigma)$, $\y = (\epsilon ' , \sigma ')$ and $\z = (\epsilon '', \sigma '')$ be formal generators of the same power, $\phi_1 :\x \rightarrow \y$ and $\phi_2 :\y \rightarrow \z$ be two formal flows. We can use these two formal flows consecutively to go from $\x$ to $\z$. We call this process the \emph{composition} of $\phi_1$ and $\phi_2$ and denote it by $(\phi_1 , \phi_2)$. Although we call the pair $(\phi_1 , \phi_2)$ the composition of $\phi_1$ and $\phi_2$, this is not necessarily a formal flow.
\end{definition}

\begin{definition}
A composition of two formal flows $\phi_1 :\x \rightarrow \y$ and $\phi_2 :\y \rightarrow \x$ is called a \emph{boundary degeneration} if 
with some orientations and labeling of the arcs, the pair $(\phi_1 , \phi_2)$ has one of the forms in Figure~\ref{dege}.
The boundary degeneration is of Type $\A$ when the circle(s) in Figure~\ref{dege} is decorated with $\A$, and it is of Type $\B$ if the circle(s) is decorated with $\B$.
\end{definition}

\begin{figure}[h]
\centerline{\includegraphics[scale=0.5]{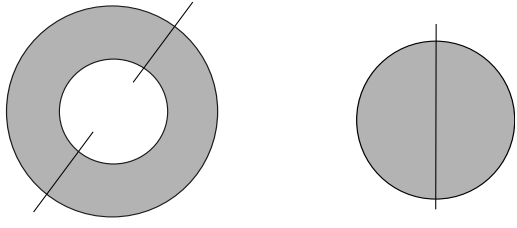}}
\caption {Boundary degenerations of compositions of flows.}
\label{dege}
\end{figure}

Now we are equipped to define a sign assignment of power $N$.

\begin{definition} \label{def sign} 
A sign assignment $\SS$ of power $N$ is a map $\SS :\FF_N \longrightarrow \{\pm 1\}$ that satisfies the following conditions: 

(S-1) For a Type $\A$ boundary degeneration $(\phi_1 , \phi_2)$
\begin{center}
$\SS(\phi_1) \cdot \SS(\phi_2)=1$
\end{center}

(S-2) For a Type $\B$ boundary degeneration $(\phi_1 , \phi_2)$
\begin{center}
$\SS(\phi_1) \cdot \SS(\phi_2)= -1$
\end{center}

(S-3) Given distinct pairs $(\phi_1, \phi_2)$ and $(\phi_3, \phi_4)$ such that the initial formal generators of $\phi_1$ and $\phi_3$ are the same and the terminal formal generators of $\phi_2$ and $\phi_4$ are the same, then
\begin{center}
$\SS(\phi_1) \cdot \SS(\phi_2)+\SS(\phi_3) \cdot\SS(\phi_4)=0$.
\end{center}
\end{definition}

In order to state the main theorem of \cite{Sign}, we need one more definition.

\begin{definition}
Let $\SS$ and $\SS'$ be two sign assignments of power $N$. We say that $\SS$ and $\SS'$ are gauge equivalent sign assignments if there exists a map $u:\S_N \rightarrow \pm 1$ such that for any formal flow $\phi:\x\rightarrow\y$ in $\FF_N$
\begin{center}
$\SS'(\phi)=u(\x)\cdot\SS(\phi)\cdot u(\y)$.
\end{center}
\end{definition}

\begin{theorem}
\cite{Sign} For a given power $N$ there exists a sign assignment, and it is unique up to gauge equivalence.
\end{theorem}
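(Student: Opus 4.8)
The plan is to prove existence and uniqueness separately, and to organize both around the graph whose vertices are the formal generators $\S_N$ and whose edges are the formal flows $\FF_N$. For \emph{uniqueness}, suppose $\SS$ and $\SS'$ are two sign assignments of power $N$. I would first show that the graph $(\S_N, \FF_N)$ is connected: given any two formal generators $\x = (\epsilon,\sigma)$ and $\y = (\epsilon',\sigma')$, one can change $\sigma$ into $\sigma'$ by a sequence of transpositions, each realized by a formal rectangle (after first adjusting signs as needed via formal bigons), so there is always a path of flows between them. Then define $u:\S_N \to \{\pm 1\}$ by fixing a basepoint $\x_0$, choosing for each $\x$ a path $\gamma_\x$ of flows from $\x_0$ to $\x$, and setting $u(\x)$ to be the product along $\gamma_\x$ of $\SS'(\phi)\cdot\SS(\phi)$ over the edges $\phi$ traversed (with the convention that reversing a flow inverts its contribution, which is consistent because both $\SS$ and $\SS'$ take values in $\{\pm 1\}$). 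The content is then to check that $u$ is well-defined, i.e. independent of the chosen path; equivalently, that $\SS'\cdot\SS^{-1}$ evaluates to $+1$ on every cycle in the graph. Since the graph is connected, it suffices to check this on a generating set of cycles.

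The heart of the argument — and the step I expect to be the main obstacle — is identifying such a generating set of cycles and verifying the cocycle condition on each. The natural candidates are: (i) length-two cycles $(\phi_1,\phi_2)$ with $\phi_2$ "reversing" $\phi_1$, which are exactly the boundary degenerations of Types $\A$ and $\B$; on these, axioms (S-1) and (S-2) pin down $\SS(\phi_1)\SS(\phi_2)$ to a value ($+1$ or $-1$) that does not depend on $\SS$, so $\SS'\cdot\SS^{-1}$ gives $+1$; and (ii) "square" cycles coming from two genuinely different ways $(\phi_1,\phi_2)$ and $(\phi_3,\phi_4)$ of getting from a common initial generator to a common terminal generator, where axiom (S-3) forces $\SS(\phi_1)\SS(\phi_2) = -\SS(\phi_3)\SS(\phi_4)$ — again a relation with a $\SS$-independent sign, so the ratio is $+1$ on the corresponding $4$-cycle. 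The genuinely hard combinatorial input is a normal-form/connectivity lemma showing that \emph{every} cycle in $(\S_N,\FF_N)$ is a product of conjugates of cycles of types (i) and (ii); this is essentially a statement that the $2$-complex built from generators, flows, and these distinguished relations is simply connected, and proving it requires a careful case analysis of how two flows can share endpoints (disjoint supports, overlapping in one index, overlapping in two indices), together with bookkeeping of the $\epsilon$-signs. Once this lemma is in hand, well-definedness of $u$ is immediate, and the gauge-equivalence identity $\SS'(\phi) = u(\x)\SS(\phi)u(\y)$ follows by reading it off the defining path for $\y$ extended by the edge $\phi$.

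For \emph{existence}, I would run the same machinery in reverse: pick a spanning tree of the connected graph $(\S_N,\FF_N)$, declare $\SS$ to be an arbitrary fixed value (say $+1$) on the tree edges, and extend to non-tree edges by requiring that the value on the unique fundamental cycle through each non-tree edge agree with whatever (S-1)/(S-2)/(S-3) demand for that cycle. The consistency check is then exactly the simple-connectivity lemma above: because all relations are consequences of the distinguished cycles (i) and (ii), and those have been satisfied by construction on the fundamental cycles, the axioms hold on all compositions. A minor additional point to verify is that the three axioms are not mutually contradictory — e.g., that a composition which is simultaneously expressible via (S-3) in two ways, or is both a square and a degeneration, gets a consistent value; this again reduces to the same structural lemma. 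I would present existence first if the simple-connectivity lemma is stated in a symmetric enough form that both directions fall out of it, and otherwise prove uniqueness first since its path-independence formulation makes the role of the lemma most transparent.
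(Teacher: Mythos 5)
The paper does not prove this theorem; it is quoted from \cite{Sign} and used as a black box, so there is no in-text proof to compare against. Your uniqueness reduction is correct in outline: if $\SS$ and $\SS'$ both satisfy (S-1)--(S-3), then $v(\phi):=\SS'(\phi)\SS(\phi)$ multiplies to $+1$ around each distinguished cycle (around a boundary degeneration because $\SS'(\phi_1)\SS'(\phi_2)=\SS(\phi_1)\SS(\phi_2)$, and around an (S-3) square because both $\SS(\phi_1)\SS(\phi_2)\SS(\phi_3)\SS(\phi_4)$ and its $\SS'$-counterpart equal $-1$), so $v$ is a coboundary $u(\x)u(\y)$ provided every cycle in the graph of formal generators and flows is generated, up to conjugation, by these distinguished cycles. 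That ``normal-form'' lemma, which you flag as the hard step and do not prove, is where essentially all of the content lives: the case analysis of how two flows can interact (disjoint, one index in common, two indices in common, together with $\epsilon$-bookkeeping) is the bulk of \cite{Sign}, and nothing in your sketch engages with it.

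The existence argument has a gap that is not just an omitted detail: it appeals to the wrong obstruction. Simple connectivity of the $2$-complex $X$ assembled from formal generators, flows, and distinguished cycles controls $H^1(X;\Z_2)$ and hence uniqueness, but existence of a $1$-cochain $\SS$ whose coboundary equals the prescribed function $c$ on the $2$-cells (namely $+1$ on Type $\A$ degenerations, $-1$ on Type $\B$ degenerations and on (S-3) squares) is governed by whether $[c]=0$ in $H^2(X;\Z_2)$, and for a $2$-complex $H^2$ can be large even when $\pi_1$ vanishes, since $H_2=\ker\partial_2$ need not be zero. Concretely, your spanning-tree recipe determines $\SS$ on a non-tree edge by decomposing its fundamental cycle into $2$-cell boundaries and multiplying the prescribed values; but two such decompositions can differ by a $2$-cycle on which $c$ need not be $+1$, so the value is not a priori well-defined, and even if it were you would still have to verify (S-1)--(S-3) on $2$-cells that do not appear in any fundamental-cycle decomposition. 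You call this consistency check ``a minor additional point,'' but it is the actual content of existence and is logically independent of the $\pi_1$ lemma. The argument in \cite{Sign} does not proceed through this abstract route; it constructs $\SS$ directly and verifies the axioms, which sidesteps the $H^2$ question entirely.
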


Let $\mathcal{D}=(\Sigma, \bm{\A}, \bm{\B}, \b w, \b z)$ be a nice Heegaard diagram. If $|\A|=N$ we say  that the Heegaard diagram is of \emph{power $N$}. Let $\S$ denote the set of generators of this Heegaard diagram. For $\x,\y \in \S$ we call the set of empty bigons and empty rectangles from $\x$ to $\y$ the set of flows from $\x$ to $\y$, and denote it by $\mathrm{Flows}(\x,\y)$.

Fix an ordering for $\bm{\A}$ and $\bm{\B}$, and an orientation for each $\A$- and $\B$-curves.
This data associates to each generator $\x$ of the Heegaard diagram, a formal generator $\x_f$ of power $N$, and to each flow $\phi$ from $\x$ to $\y$, a formal flow $F(\phi):\x_f \rightarrow \y_f$ of power $N$.
Let $\SS$ be a sign assignment of the same power.

\begin{remark}\label{signremark}
Note that a sign assignment, assigns a $+$ or $-$ sign to each formal flow. For simplicity when the initial and terminal generators are understood from the context, we talk about the sign of the underlying region. The data of those two generators is always present in the definition of the formal flow, even when it is not explicitly mentioned.

Let $\mathcal{D}$ be as above. Given three generators $\x,\y,\z\in\S$, a flow $\phi$ from $\x$ to $\y$ and a disjoint flow $\psi$ from $\y$ to $\z$. Let the underlying region of $\phi$ and $\psi$ be $r$ and $s$. We can use $s$ to go from $\x$ to another generator $\y'$, and by using $r$ from $\y'$ we arrive at $\z$. The (S-3) property implies that 
$$F(r).F(s)=-F(s).F(r).$$
     
The above equality shows that the product of the signs of flows depends on the order that we use them.

\end{remark}

 We define the boundary operator $\t{\partial}^{\Z}(\x)$ for each $\x\in \S$ as follows
$$\t{\partial}^{\Z}(\x)=\sum_{\y\in \S}\sum_{\phi \in \mathrm{Flows}(\x,\y)} \SS(F(\phi))\y .$$

\begin{theorem} \label{signassign}
\cite{Sign} The map $\t{\partial} ^\Z$ satisfies $(\t{\partial}^\Z)^2=0$ (over $\Z$), and the resulting Floer homology $\t{HF}(\mathcal{D};\Z)$ is independent of the choice of $\SS$, the order of the $\A$- and $\B$-curves and the chosen orientation on each of the $\A$- and $\B$-curves. 
\end{theorem}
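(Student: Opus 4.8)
The plan is to prove Theorem~\ref{signassign} by adapting the standard argument from \cite{nice} and \cite{Sign}, which splits into two independent assertions: the identity $(\t\partial^\Z)^2 = 0$, and the independence of the resulting homology from the auxiliary choices (the sign assignment $\SS$, the ordering of the $\bm\A$- and $\bm\B$-curves, and their orientations). The first assertion is algebraic and uses only the axioms (S-1)--(S-3); the second is a gauge-equivalence argument.

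For $(\t\partial^\Z)^2 = 0$: I would expand
\[
(\t\partial^\Z)^2(\x) = \sum_{\z\in\S}\ \sum_{\y\in\S}\ \sum_{\phi\in\mathrm{Flows}(\x,\y)}\ \sum_{\psi\in\mathrm{Flows}(\y,\z)} \SS(F(\phi))\,\SS(F(\psi))\ \z,
\]
and argue that, for fixed $\x$ and $\z$, the composite flows $(\phi,\psi)$ pair up into cancelling couples. First one classifies the domains $r\cup s$ that can arise: since the Heegaard diagram is nice, the only possibilities are (a) a disjoint union of a bigon and a rectangle, or of two rectangles, supported on disjoint parts of the diagram, which admits a second decomposition $(\psi',\phi')$ using the same regions in the opposite order, so Remark~\ref{signremark} (equivalently (S-3)) gives $\SS(F(\phi))\SS(F(\psi)) = -\SS(F(\psi'))\SS(F(\phi'))$ and the two terms cancel; (b) a ``thin'' annular region (a row or column of width one), which for $\x = \z$ is a boundary degeneration — but for a nice diagram the relevant composite flows come in pairs realizing the same domain, and one checks the contribution vanishes using (S-1) and (S-2) together with the observation that such domains have no holomorphic representatives contributing with a single sign; (c) domains that decompose in exactly two ways as an ordered pair of empty flows into the same pair $(\x,\z)$, which again cancel by (S-3). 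The key point to get right in the branched-cover setting is that these are still the only local pictures: a priori the multi-grid $\t G$ could have composite domains wrapping around the branch cuts, but one argues that the ``inside/outside'' structure of Levine's construction confines every index-$2$ domain to a single copy $T_i$ or to an adjacent pair glued along one cut, reducing it to the ordinary grid case.

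For the independence statements: changing the ordering or orientation of the curves replaces each formal generator $\x_f$ and each formal flow $F(\phi)$ by a new one, but by the uniqueness part of Theorem~2.? (the theorem of \cite{Sign} just quoted, that a sign assignment of a given power is unique up to gauge equivalence), the two resulting sign functions on $\FF_N$ are gauge equivalent; a gauge equivalence $u:\S_N\to\{\pm1\}$ induces the chain isomorphism $\x\mapsto u(\x_f)\,\x$, which intertwines the two boundary operators up to sign and hence an isomorphism on homology. Independence of $\SS$ itself is the same argument applied to two sign assignments of the same power. The only subtlety is bookkeeping: one must check that reordering or reorienting a curve does indeed only change $F(\phi)$ within its gauge-equivalence class and does not, say, change the power $N = mn$ or the combinatorial type of the flows — this is immediate since those operations are ambient-plane symmetries of the graphical representations.

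I expect the main obstacle to be case (b)/(c) in the $(\t\partial^\Z)^2=0$ computation: verifying that in $\t G$ every composite empty-flow domain from $\x$ to $\z$ either decomposes in exactly two ways (giving a $(S-3)$ cancellation) or is a boundary degeneration whose sign contribution is killed by $(S-1)$/$(S-2)$, and that no new exotic domains appear because of the gluing along the branch cuts. Once the local classification of composite domains in the branched cover is pinned down — which is where Levine's ``inside/outside'' normalization and the explicit gluing rule for the cuts are used — the sign cancellations are formal consequences of Definition~\ref{def sign}, exactly as in \cite{Sign}.
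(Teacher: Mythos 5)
This theorem is quoted from \cite{Sign} and is not proved in the paper; the paper's ``proof'' is simply a citation, so there is no in-paper argument against which to compare your sketch.  That said, a few remarks are in order.

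First, you have misread the scope of the statement.  Theorem~\ref{signassign} is stated for an arbitrary nice Heegaard diagram $\mathcal{D}=(\Sigma,\bm\A,\bm\B,\b w,\b z)$ of power $N$, not for the branched-cover diagram $\t G$ of Levine's construction.  The paragraph in your sketch about ``the inside/outside structure of Levine's construction'' and ``index-$2$ domains wrapping around the branch cuts'' is therefore out of place: there are no branch cuts in the hypotheses of this theorem, and no such classification is needed.  The application to $\t G$ is made only later (in Section~\ref{comb}), where the theorem is invoked after observing that $\t G$ is a nice diagram of power $mn$.

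Second, case~(b) of your cancellation argument is garbled.  The correct mechanism for boundary degenerations (composite flows $\x\to\y\to\x$ whose total domain is a thin annulus) in the nice-diagram setting is that, for each component of $\x$, exactly one Type~$\A$ annulus and one Type~$\B$ annulus pass through it; by (S-1) the former contributes $+1$ to the coefficient of $\x$ in $(\t\partial^\Z)^2\x$, by (S-2) the latter contributes $-1$, and these contributions cancel in pairs.  Your remark that ``such domains have no holomorphic representatives contributing with a single sign'' does not describe this and does not substitute for it; the cancellation is purely combinatorial and comes from pairing Type~$\A$ against Type~$\B$, not from absence of contributions.  (In the grid setting one can also discard such annuli outright if the differential counts only rectangles disjoint from $\X$ and $\O$, but that is a feature of a particular flavor of the theory, not of the general statement.)

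Otherwise your outline of the \cite{Sign} argument --- case~(a)/(c) cancellations via (S-3), and the invariance statements via uniqueness of sign assignments up to gauge equivalence together with the induced chain isomorphism $\x\mapsto u(\x_f)\x$ --- is essentially the right shape.  Since the paper itself offers no proof, the most accurate thing to say is: this theorem should simply be cited, and if one wished to reprove it one should argue for an arbitrary nice diagram, with the boundary-degeneration case handled by (S-1)/(S-2) pairing as above and without any reference to the branched-cover geometry.
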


\subsection{Combinatorial knot Floer homology}\label{comb}

With notations as in sub-section~\ref{Construction}, we will define the boundary map for the complex $C(\t G)$ with coefficients in $\Z$. We fix a sign assignment $\SS$ of power $mn$. We also fix an ordering and orientation for the $\A$- and $\B$-curves.

Given $\x,\y \in S(\t G)$ and a rectangle $r\in \pi(\x,\y)$, we consider the formal rectangle $F(r)$ (with the orientation and the ordering that we fixed on the edges) associate to $r$ and compute $\SS(F(r))$. For $\x\in \S(\t G)$ the boundary operator $\partial$ is defined as follows. 
$$\partial \x= \displaystyle \sum_{\y \in \S(\t G)} \left(\sum_{r\in \pi(\x,\y)} \SS(F(r)) \right) \y .$$

\begin{definition}
Given two pairs $(V,n)$ and $(W,m)$, where $V,W$ are modules of finite rank over $\Z$ and $m,n$ are integers such that $n\geq m$. We say that these pairs are equivalent if $V \cong W \otimes (\Z \oplus \Z)^{n-m}$. If $V,W$ are graded modules the isomorphism is degree preserving, where the copies of $\Z$ are taken to be in degree $0$ and $-1$.
\end{definition}

\begin{definition}\label{stablehomology}
The class of the pair $(H_*(C(\t G)),n)$ is called the stable combinatorial Floer homology of $K$. 
\end{definition}


\section{Invariance of combinatorial knot floer homology} \label{proofs}

By the work of Cromwell \cite{Cromwell} any two Heegaard diagrams $G$ and $H$ for a knot $K\subset S^3$ can be obtained from each other by a sequence of the following three moves.

\begin{itemize}
\item Cyclic permutation
\item Commutation
\item Stabilization (destabilization).
\end{itemize}

From any diagram of $K$ we obtain a diagram for $\t{K} \subset \Sigma_m(K)$. Therefore any two Heegaard diagrams $\t{G}$ and $\t{H}$ for $\t{K}$ can be obtained by a sequence of the following three moves.

\begin{enumerate}
\item Cyclic permutation: We permute columns (resp. rows) of $G$ which induces a permutation on the set of columns (resp. rows) of each grid of $\t{G}$.

\item Commutation: We consider two adjacent columns (resp. rows) of $G$. If the $X$ and $O$ markings in one of them are between the markings of the other (i.e. the top marking in the first column  is above the top marking of the second column, and the bottom marking of the first column is below the bottom marking of the second column), we switch these columns (resp. rows). This move induces a commutation on the Heegaard diagram $\t{G}$, which in each grid commutes the corresponding adjacent columns (resp. rows). 
\item Stabilization (destabilization): Start with a row (resp. column) of $G$. Add a column between the two markings in this row, and add a row below it. Move one of the markings of the selected row to the new row, and add two markings (an $X$ and an $O$) in the appropriate (uniquely determined) positions in the new column. The induced move on the Heegaard diagram $\t{G}$ adds one row and one column to each grid of $\t{G}$. The markings in each grid come from the markings of $G$.
\end{enumerate}

\subsection{Horizontal/Vertical cuts}

Given a grid diagram $G$ for the knot $K$, in the construction that we discussed in \ref{Construction}, we cut along the vertical lines connecting $X$ and $O$ markings and glue the resulting grids together to obtain a Heegaard diagram for the pullback of $K$ to $\Sigma_m(K)$. We denote this diagram by $\t{G}^v$ and the resulting complex by $(C^v,\partial^v)$. Alternatively we could have done the same with the horizontal lines connecting the markings to obtain $\t{G}^h$ and $(C^h,\partial^h)$.

 In the following subsection we show that both of these construction give the same homology and therefor we can use them interchangeably.

The construction of $\t{G}^v$ is discussed in \ref{Construction}. A few comments about $\t{G}^h$ are in order. To construct $\t{G}^h$ we take $m$ copies of $G$, and cut them along the horizontal lines connecting $X$ and $O$ markings. For each pair of markings in a row, if the $X$ marking is in the left of the $O$ marking, we glue the lower part of $i^{th}$ copy to the upper side of $i+1^{th}$ copy. If the $X$ marking is in the right of the $O$ marking, we glue the upper part of $i^{th}$ copy to the lower side of $i+1^{th}$ copy. 

In this set up since the cuts are horizontal, there are no intersections between the $\A$-curves and the cuts, but $\B$-curves might intersect the cuts, hence they might have segments in different grids . Therefore each lift of any $\A$-curve belongs to exactly one grid of $\t{G}^h$. We denote the lift of $\A_j$ to the $i^{th}$ grid of $\t{G}^h$ by $\t{\A}^i_j$, and $\t\B^i_j$ denotes the lift of $\B_j$ which intersects $\t\A^i_1$.

\begin{lemma}\label{hv-generators}
With notation as above, $\t\A_i^a \cap \t\B_j^b \neq \emptyset$ holds in $\t G^v$ if and only if it holds in $\t G^h$. 
\end{lemma}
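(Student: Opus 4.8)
The plan is to show that the combinatorial data recording which lifted $\A$-curve meets which lifted $\B$-curve is intrinsic to the grid diagram $G$ and the branched cover structure, independent of whether we cut vertically or horizontally. The key observation is that an intersection point of $\A_i$ and $\B_j$ in $G$ lifts to $m$ points in $\t T$, and the only thing that differs between the two constructions is the \emph{labeling} of which lift lands in which grid. So I would set up a bijection between the $m$ lifts of the point $\A_i \cap \B_j$ in $\t G^v$ and in $\t G^h$ that is compatible with the deck transformation, and then track how the labels are assigned.

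First I would fix, for each construction, the convention that labels the $m$ sheets. In $\t G^v$ the sheets are glued along the vertical cuts, and $\t\A_i^a$ is defined as the lift of $\A_i$ meeting $\t\B_1^a$; similarly for $\t G^h$ using the horizontal cuts. The point is that the monodromy of the branched cover $\pi \colon \t T \to T$ around the branch points $X_i, O_i$ is the same $m$-cycle in both pictures (it is determined by the Seifert surface $F$, not by the choice of cuts). So I would compute, for a loop in $T \setminus \{X_i, O_i\}$ that carries $\B_j$ to itself, the induced permutation of the sheets, and likewise for $\A_i$; the intersection $\t\A_i^a \cap \t\B_j^b \neq \emptyset$ happens precisely when the sheet over the point $\A_i \cap \B_j$ reached by following $\t\A_i^a$ coincides with the one reached by following $\t\B_j^b$.

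The main step is then a bookkeeping argument: starting from $\t\B_1^a$, a $\B$-curve $\B_j$ passes through the cuts as its index increases from $1$ to $j$, picking up a shift of the sheet label each time it crosses a vertical cut in the $v$-picture (and no shift in the $h$-picture, where instead the $\A$-curves are unshifted and the $\B$-curves cross horizontal cuts). A direct comparison shows that the resulting function $(i,j,a) \mapsto b$ such that $\t\A_i^a \cap \t\B_j^b \neq \emptyset$ is, in both constructions, given by the same formula in terms of the relative positions of the $X$ and $O$ markings in the rows and columns between the relevant indices. Hence the nonemptiness condition agrees, which is the assertion.

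The hard part will be organizing the combinatorics of the sheet-shifts cleanly: one must be careful about the sign of each shift (whether $X$ is above/below, or left/right of, $O$), and about the fact that an $\A$-curve in $\t G^v$ can have segments in several grids while in $\t G^h$ it lies in one grid (and vice versa for $\B$-curves). I expect the cleanest route is to phrase everything in terms of the monodromy representation $\pi_1(T \setminus \{X_i,O_i\}) \to \Z/m$ and observe that both constructions are just two trivializations of the same flat $\Z/m$-bundle over the $1$-skeleton, so that the pairing of lifts is manifestly the same; the marking-position case analysis then only enters in verifying that the two trivializations are related by a cocycle, which does not affect whether two lifts coincide.
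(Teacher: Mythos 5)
Your core plan---track how the sheet label shifts as a curve crosses the branch cuts, and compare the two cutting conventions---is precisely what the paper does, so the approach is sound. The paper makes this concrete by introducing the rectangle $r\subset G$ with corners $\A_1\cap\B_1$ and $\A_i\cap\B_j$, showing that in $\t G^v$ the cumulative shift along $\t\A_i^a$ from $\t\B_1^a$ to the lift of $\B_j$ is $O(r)-X(r)$, and that in $\t G^h$ the shift along $\t\B_j^b$ from $\t\A_1^b$ to the lift of $\A_i$ is $X(r)-O(r)$; the nonemptiness criterion $b-a=O(r)-X(r)$ therefore comes out the same in both pictures. Your sketch stops short of actually producing this common criterion, which is the content of the lemma.

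The closing suggestion---that the result is ``manifestly the same'' because both constructions are trivializations of one flat $\Z/m$-bundle and the marking-count case analysis only verifies a cocycle that ``does not affect whether two lifts coincide''---is where I would push back. The lemma is about \emph{labeled} lifts, and the two labeling conventions are genuinely different: in $\t G^v$ the $\B$-curves are indexed by the grid they lie in and the $\A$-curves by which $\t\B_1$-lift they meet, while in $\t G^h$ the roles are exchanged. Two trivializations of a cover related by a nonconstant cocycle do, in general, produce \emph{different} labeled intersection patterns, so one cannot conclude agreement from abstract bundle theory alone. What rescues the argument is a geometric fact you have not yet invoked: the reference curves $\A_1$ and $\B_1$ cross neither family of branch cuts (the cuts run between the markings inside a column or row, and $\A_1,\B_1$ lie on the grid boundary), so the deck-equivariant identification of the two covers pinned at a preimage of $\A_1\cap\B_1$ also preserves the grid index of the preimages of every $\A_i\cap\B_1$ and every $\A_1\cap\B_j$. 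If you want the monodromy framing to be a genuine shortcut, this is the observation that must be made explicit; otherwise you are back to the marking-count computation that the paper carries out via the rectangle $r$.
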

\begin{proof}

Let $r$ be the rectangle in $G$ whose upper right corner is the intersection of $\A_i$ and $\B_j$, and its lower left corner is the intersection of $\A_1$ and $\B_1$. Let $X(r)$ (resp. $O(r)$) be the number of $X$ (resp. $O$) markings in $r$.

We start at the intersection of $\t\A_i^a$ and $\t\B_1^a$ in the $a^{th}$ grid of $\t G^v$, and move along $\t\A_i^a$, one column at a time. Let $s$ and $t$ be chosen such that $\t\A_i^a \cap \t\B_u^s \neq \emptyset$ and $\t\A_i^a \cap \t\B_{u+1}^t \neq \emptyset$. In the $u^{th}$ column of $G$, exactly one of the following cases happens.
\begin{itemize}
\item If both $X$ and $O$ marking in that column are in the same side of $\A_j$ (either both are above it or both lie below it), in this case $t=s$.
\item The $X$ marking is above $\A_j$ and the $O$ marking is below it, in this case $t=s+1$. 
\item The $X$ marking is below $\A_j$ and the $O$ marking is above it, in this case $t=s-1$. 
\end{itemize}
Therefore $t-s$ is equal to the number of $O$ markings in the $u^{th}$ column of $r$ minus the number of $X$ markings in it. Since $\t\A_i^a \cap \t\B_1^a \neq \emptyset$, by adding the contribution from each column of $r$ we get
$$\t\A_i^a \cap \t\B_j^b \neq \emptyset \text{ in } \t G^v \Leftrightarrow b-a= O(r)-X(r) .$$

In the horizontal cut configuration $\t G^h$, we start at the intersection of $\t\A_1^b$ and $\t\B_j^b$. We move along the $\t\B_j^b$ one row at a time. Similarly we obtain 
$$\t\A_i^a \cap \t\B_j^b \neq \emptyset \text{ in } \t G^h \Leftrightarrow a-b= X(r)-O(r) .$$
Therefore  
$$\t\A_i^a \cap \t\B_j^b \neq \emptyset \text{ in } \t G^v \Leftrightarrow b-a= O(r)-X(r) \Leftrightarrow \t\A_i^a \cap \t\B_j^b \neq \emptyset \text{ in } \t G^h .$$

\end{proof}

This gives us a natural isomorphism $I_{v,h}$ between the two sets $\S(\t G^v)$ and $\S(\t G^h)$. The isomorphism $I_{v,h}$ is given by sending a generator $\x\in\S(\t G^v)$ to the generator $I_{v,h}(\x) \in\S(\t G^h)$ with the same intersections as $\x$. By abuse of notation we also denote $I_{v,h}(\x)$ by $\x$. By extending $I_{v,h}$ linearly, we obtain an isomorphism between $C^v$ and $C^h$ as groups. In the following lemma we show that this group isomorphism is actually an isomorphism of chain complexes.
\begin{lemma}\label{lemma-hor-ver}
Let $\x,\y \in \S(\t G^v)\cong\S(\t G^h)$. The relation $\y \in \partial\x$ holds in $(C^v,\partial^v)$ if and only if it holds in $(C^h,\partial^h)$.
\end{lemma}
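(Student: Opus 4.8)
The plan is to show that the isomorphism $I_{v,h}\colon \S(\t G^v)\to\S(\t G^h)$ not only matches generators but also matches flows together with their signs. First I would set up a bijection between the rectangles contributing to $\partial^v$ and those contributing to $\partial^h$. Given $\x,\y\in\S(\t G^v)$ differing in two components, a rectangle $r\in\pi(\x,\y)$ in $\t G^v$ is supported on two adjacent $\t\A$-lifts and two adjacent $\t\B$-lifts, glued up across the vertical branch cuts; under the projection $\pi$ to $T$ it covers a region (possibly wrapping around several sheets) of the base grid. The key observation, in the spirit of Lemma~\ref{hv-generators}, is that the region $\pi(r)\subset T$ together with the combinatorial data ``which sheet each corner lies on'' is intrinsic to the base grid and the sheet-counting formula $b-a=O(r')-X(r')$; the same underlying base region, reinterpreted by gluing along \emph{horizontal} cuts instead of vertical ones, yields a unique rectangle $r'\in\pi(I_{v,h}\x,I_{v,h}\y)$ in $\t G^h$, and this assignment $r\mapsto r'$ is a bijection $\pi(\x,\y)\cong\pi(I_{v,h}\x,I_{v,h}\y)$. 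I would prove this by the same ``move one column/row at a time'' bookkeeping used in Lemma~\ref{hv-generators}, tracking how the corners of the rectangle sit in the various sheets as one crosses each cut, and checking that emptiness of the interior (no $\x$-component inside) is preserved since the set of intersection points inside the region is determined by the base data.

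Once the bijection of flows is in hand, the remaining point is that it preserves signs, i.e. $\SS(F(r))=\SS(F(r'))$. Here I would invoke Theorem~\ref{signassign}: the homology $\t{HF}(\mathcal D;\Z)$ is independent of the chosen sign assignment $\SS$ and of the ordering and orientation of the $\A$- and $\B$-curves. More precisely, $\t G^v$ and $\t G^h$ have the same power $mn$, and $I_{v,h}$ identifies their generators; choosing compatible orderings and orientations of the lifted curves on the two sides makes $F$ agree, so the formal rectangles $F(r)$ and $F(r')$ are literally the same element of $\FF_{mn}$ (the underlying pairing $\sigma$, the local signs $\epsilon$, and the NE/SW versus NW/SE corner data all coincide because they are read off from the base grid). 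Then $\SS(F(r))=\SS(F(r'))$ is immediate, and hence $\partial^h(I_{v,h}\x)=I_{v,h}(\partial^v\x)$ coefficient by coefficient.

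Therefore $I_{v,h}$ is a chain isomorphism, which is the assertion of the lemma; I would phrase the conclusion as: for each fixed $\y$, the coefficient $\sum_{r\in\pi(\x,\y)}\SS(F(r))$ computed in $\t G^v$ equals the coefficient $\sum_{r'\in\pi(\x,\y)}\SS(F(r'))$ computed in $\t G^h$, because the bijection $r\mapsto r'$ is sign-preserving term by term.

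The main obstacle I anticipate is the careful verification that the rectangle bijection is well-defined and sign-preserving in the wrap-around cases, where a rectangle in $\t G^v$ passes through several branch cuts and thus lives in a union of grids: one must check that reinterpreting the same base region with horizontal gluings still produces an \emph{embedded} rectangle (an honest topological disk with the correct corner pattern), that its interior really is $\x$-empty, and that the induced orientations on its four edges—and hence the local intersection signs at the four corners—match those of the original. All of this is combinatorial but requires a patient case analysis of the positions of the $X$ and $O$ markings relative to the edges of the rectangle, analogous to the three-case split in the proof of Lemma~\ref{hv-generators}, now carried out for a two-by-two block of curves rather than a single curve.
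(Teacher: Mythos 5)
Your plan matches the paper's proof in its essential structure: both project $\t r^v$ to the base grid $G$ and re-lift via the horizontal gluings, using the sheet-counting formula established in Lemma~\ref{hv-generators} to verify that the resulting rectangle $\t r^h$ has the correct boundary curves and therefore connects $\x$ to $\y$. The paper phrases the re-lifting step via the lifting property of covering maps and leaves the emptiness check and the sign agreement implicit (addressed only when it later asserts $I_{v,h}$ is a chain isomorphism), so your additional care about these points is a correct refinement of the same approach rather than a different one.
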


\begin{proof}
We assume that $\y \in \partial\x$ in $(C^v,\partial^v)$. Thus $\x$ and $\y$ differ in exactly two components. Let these components be at the intersections of $\t\A_i^a$ and $\t\A_j^b$ with $\t\B_k^c$ and $\t\B_l^d$ with $i\leq j$ and $k\leq l$. By definition there exist an empty rectangle $\t r^v$ in $\t G^v$ connecting $\x$ and $\y$. 

Let $r_1$ be the rectangle in $G$ whose upper right corner is the intersection of $\A_i$ and $\B_k$, and its lower left corner is the intersection of $\A_1$ and $\B_1$. See Figure~\ref{hor-ver}. Since $\t\A_i^a \cap \t\B_k^c \neq \emptyset$, the argument of Lemma~\ref{hv-generators} shows that $c-a=O(r_1)-X(r_1)$.

\begin{figure}[h]
\centerline{\includegraphics[scale=0.9]{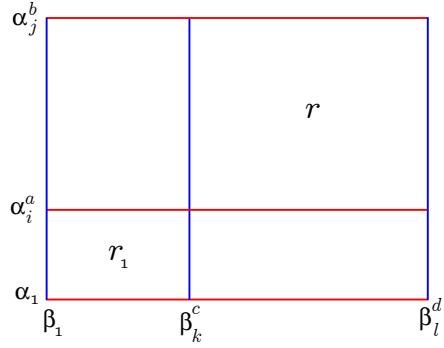}}
\caption {Arrangement of the rectangles $r$ and $r_1$ together with $\A$- and $\B$-curves as in the proof of lemma~\ref{lemma-hor-ver}. }
\label{hor-ver}
\end{figure}

Let us denote the projection of $\t r^v$ to $G$ by $r$. We lift $r$ to $\t G^h$ to obtain $\t r^h$ as follows. By lemma~\ref{hv-generators} we know that in $\t G^h$ the curve $\t\A_j^b$ intersects both $\t\B_k^c$ and $\t\B_l^d$. We lift the upper edge of $r$ to this segment of $\t\A_j^b$. By the lifting property of covering maps fixing the lift of the upper edge of $r$, uniquely determines a lift of $r$ to $\t G^h$. Since we are in the horizontal cut configuration, the lower edge of $\t r^h$ lies on one lift of $\A_i$, which we denote by $\t\A_i^{a'}$. Similar to the vertical case, from $\t\A_i^{a'}\cap\t\B_k^c\neq \emptyset$ we conclude that $c-a'=O(r_1)-X(r_1)$. Thus $a=a'$, and the two rectangles $\t r^v$ and $\t r^h$ have the same boundary curves. Therefore $\t r^h$ is the desired rectangle connecting $\x$ to $\y$ in $\t G^h$.

\end{proof}

The above two lemmas complete the proof of the following lemma.

\begin{lemma}
There is a natural chain map $I_{v,h}:(C^v,\partial^v)\to(C^h,\partial^h)$ which is an isomorphism.
\end{lemma}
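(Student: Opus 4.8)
The plan is to combine the two preceding lemmas directly. Lemma~\ref{hv-generators} establishes that the intersection patterns of lifted $\A$- and $\B$-curves agree in $\t G^v$ and $\t G^h$, which is exactly what is needed to make the naive identification of generators well-defined. Concretely, a generator of $\t G^v$ is a choice of one intersection point on each lift of each $\A_i$ and each lift of each $\B_j$; by Lemma~\ref{hv-generators} the same set of points determines a valid generator of $\t G^h$, and vice versa. So first I would spell out that $I_{v,h}\colon \S(\t G^v)\to \S(\t G^h)$, $\x\mapsto \x$, is a well-defined bijection, and extend it $\Z$-linearly to a group isomorphism $I_{v,h}\colon C^v\to C^h$.

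Next I would verify that $I_{v,h}$ is a chain map, i.e.\ $I_{v,h}\circ\partial^v=\partial^h\circ I_{v,h}$. This is precisely the content of Lemma~\ref{lemma-hor-ver}: for generators $\x,\y$ identified under $I_{v,h}$, there is an empty rectangle from $\x$ to $\y$ in $\t G^v$ if and only if there is one in $\t G^h$, and in fact the proof exhibits a canonical bijection $\t r^v\leftrightarrow \t r^h$ between the rectangles in $\pi(\x,\y)$ in the two diagrams via projection to $G$ and re-lifting. Since $\partial^v$ and $\partial^h$ are defined by summing $\SS(F(r))$ over these rectangle sets, to conclude I must check that the matched rectangles $\t r^v$ and $\t r^h$ receive the same sign. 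But the sign $\SS(F(r))$ depends only on the formal rectangle $F(r)$, which is determined by the local intersection data at the four corners together with the fixed orientations and orderings of the $\A$- and $\B$-curves; since $\t r^v$ and $\t r^h$ have literally the same corner points on the same (identified) curves with the same orientations, $F(\t r^v)=F(\t r^h)$ as formal rectangles, hence the same sign. I would state this sign-matching as the one genuinely new observation needed beyond the two lemmas.

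The main obstacle — really the only subtle point — is making sure the identification of curves and orientations used to define the formal rectangles is consistent between $\t G^v$ and $\t G^h$, so that "same corners, same curves" is a legitimate statement. Once the reader accepts that the ordering/orientation conventions are transported along $I_{v,h}$ (which is the natural choice, since each lift $\t\A^i_j$ in $\t G^h$ is identified with the lift carrying the same generator component as in $\t G^v$), the equality $F(\t r^v)=F(\t r^h)$ and hence $\SS(F(\t r^v))=\SS(F(\t r^h))$ is immediate, and the chain-map property follows termwise. Finally, since $I_{v,h}$ is a bijection on generators it is an isomorphism of groups, and being a chain map it is an isomorphism of chain complexes; in particular it induces an isomorphism $H_*(C^v)\cong H_*(C^h)$, which is the "naturality" the subsequent arguments will invoke when passing freely between the vertical and horizontal cut constructions.
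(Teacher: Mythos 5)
Your proposal takes exactly the paper's route: the paper derives the lemma by citing Lemma~\ref{hv-generators} (for the bijection on generators) and Lemma~\ref{lemma-hor-ver} (for the correspondence of rectangles), with no further argument. Your additional observation — that the matched rectangles $\t r^v$ and $\t r^h$ determine identical formal rectangles and hence receive the same sign $\SS(F(\cdot))$ once the ordering/orientation conventions are transported along $I_{v,h}$ — is a genuine improvement, since the paper's Lemma~\ref{lemma-hor-ver} only asserts existence of corresponding rectangles and leaves the $\Z$-coefficient sign agreement implicit.
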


\subsection{Cyclic Permutation} \label{perm}

Let $H$ be the grid diagram obtained by cyclically permuting the rows of the grid diagram $G$. The underlying Heegard surface does not change and the set of $\A$- and $\B$- curves are permuted. In the previous subsection we showed that if we use the vertical or horizontal cuts the resulting complexes are isomorphic. 

In order to permute the rows of $G$, we assume that we have horizontal cuts. By cyclically permuting the rows of $G$, the $\A$-curves permute accordingly. Note that the $\A$-curves of $\t G$ and $\t H$ (the Heegaard diagrams of $m$-sheeted branched covers of $G$ and $H$) are related by the same permutation. The $\B$-curves of $\t H$ are obtained by a permutation $\sigma$ of the $\B$-curves of $\t G$ that gives the isomorphism between the associated complexes. The permutation $\sigma$ on $\B$-curves is as follows. If the markings on the first row are in column $i$ and $j$ with $i < j$, the $\B$-curves $\t\B_k^l$ with $k \leq i$ or $k > j$ remain the same. The $\B$-curve $\sigma(\t\B_k^l)$ for $i<k\leq j$ is $\t\B_k^{l+1}$ (resp. $\t\B_k^{l-1}$) if the marking in the $i^{th}$ column is $X$ (resp. $O$). To be more precise, if the intersection of $\t \A_a^b$ and $\t \B_c^d$ on $\t G$ is non-empty, then $\t \A_{a-1}^b \cap \sigma(\t \B_c^d) \neq \emptyset$ on $\t H$. Using the same permutations, we obtain a map between the empty rectangles of $\t G$ and the empty rectangles of $\t H$. The sign assignment to rectangles might change, but in \cite{Sign} they show that if we permute $\A$- and $\B$-curves the homology remains invariant (see Theorem \ref{signassign}). Therefore the knot Floer homology is invariant under the cyclic permutation of rows. 

Similar argument shows that the knot Floer homology is invariant under cyclic permutation of columns.


\subsection{Commutation} \label{comm}

Let $H$ be the grid diagram obtained by a commutation of two adjacent columns of the grid diagram $G$ for the knot $K$. We call the $\B$-curve between these two adjacent columns  of $G$ the \emph{distinguished} $\B$-curve and denote it by $\B^{dist}$. 

Let $\t H$ be the Heegaard diagram associated to the grid diagram $H$. For a given commutation we define Heegaard diagrams $E_0=\t G, E_1, \dots,E_{m-1} , E_{m}=\t H$ as follows. The Heegaard diagram $E_k$ consists of $m$, $n\times n$ grids, where the first $k$ grids have the same $X$ and $O$ markings as the grid diagram $H$, and the remaining grids have the same markings as $G$. We denote the lift of $\B^{dist}$ in the $(k+1)^{th}$ grid of $E_k$ by $\gamma$, and in the $(k+1)^{th}$ grid of $E_{k+1}$ by $\xi$. Note that $E_k$ and $E_{k+1}$ differ in only one grid i.e. the $(k+1)^{th}$ grid. For simplicity we draw the grid diagrams $E_k$ and $E_{k+1}$ in the same diagram. In this diagram we leave the $m-1$ common grids of $E_k$ and $E_{k+1}$ unchanged, and in the $(k+1)^{th}$ grid we draw both $\gamma$ and $\xi$ as curly lines, such that if we omit the $\xi$ (resp. $\gamma$) we obtain $E_k$ (resp. $E_{k+1}$). This is best understood by looking at Figure \ref{Pentagon}. 

\begin{figure}[h]
\centerline{\includegraphics[scale=0.9]{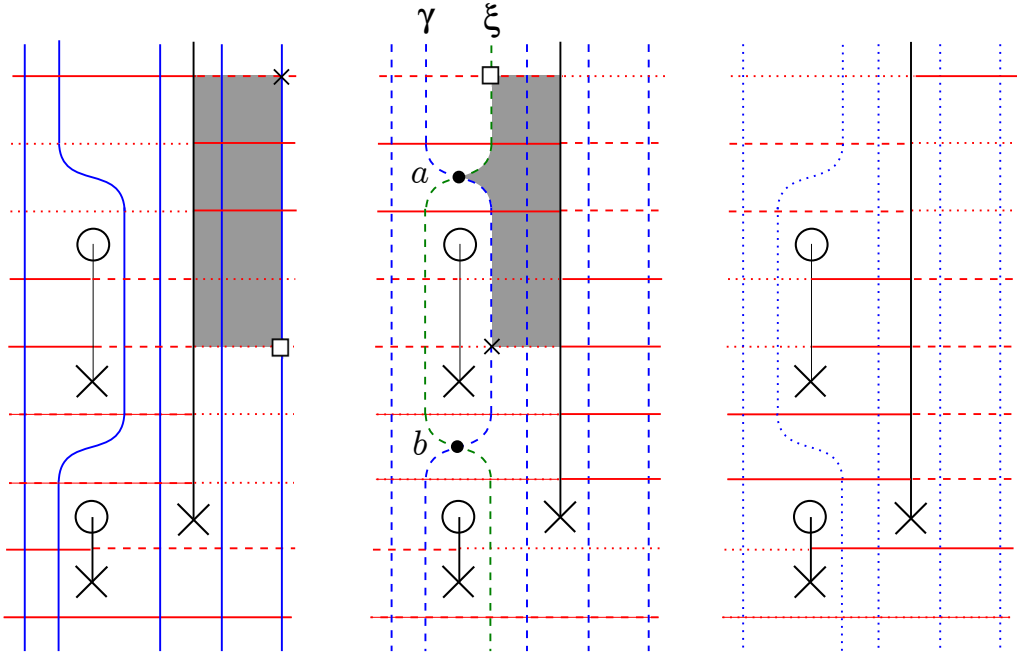}}
\caption {A typical pentagon between two generators $\x$ and $\y$ is shown in gray. The $\x$ components are shown by crosses and the $\y$ components are shown by hollow squares.}
\label{Pentagon}
\end{figure}

Using this method of drawing the grids for $E_k$'s, every grid has the same position of $X$ and $O$ markings.  This allows us to glue the $m$ grids according to the position of markings as before (see subsection~\ref{Construction}).

We denote the $j^{th}$ vertical arc in the $i^{th}$ grid of $E_k$ by $\beta_j^{i}(k)$, where $i=1,\dots ,m$ and $j=1,\dots , n$. 
We denote the lift of $\A_j$ which intersects $\B_1^{i}(k)$ by $\A_j^{i}(k)$.

A generator of the Heegaard diagram $E_k$ consists of $mn$ points of intersections of $\A$- and $\B$-curves, such that each $\A$- (and $\B$-) curve contains exactly one point. The set of all generators of $E_k$ is denoted by $\S (E_k)$.

Note that if we project a generator $\x\in \S (E_k)$ into an $n\times n$ grid, by K\H{o}nig's theorem it can be decomposed as $\x_1\cup\cdots\cup\x_m$, such that each $\x_i$ has exactly one component in each row and each column (also see \cite[Lemma 3.1]{L}). Hence we have $\x= \t\x_1\cup \dots \cup \t\x_m$, in which $\t{\x}_i$ is a lift of $\x_i$. We say that $\x_i$ is \emph{of type} $G$ (resp. $H$) if the component of $\t{\x}_i$ on the lift of $\B^{dist}$ lies in the last $m-k$ grids that have the same markings as $G$ (resp. first $k$ grids that have the same markings as $H$). 

\begin{definition}
With notations as above, we define the Alexander grading of $\x$ to be:

$$A(\x)=\frac{1}{m}\left( \sum_{\t{\x}_i\text{ is of type }G } A_G(\x_i) + \sum_{\t{\x}_i\text{ is of type }H } A_H(\x_i) \right).$$

$A_G$ (resp. $A_H$) is the Alexander grading obtained by considering the markings of $G$ (resp. $H$). Note that this definition is independent of the chosen decomposition of $\x$. If a component of $\x$ is not above $\B^{dist}$ (i.e. a lift of $\B^{dist}$) the contribution of that component to the Alexander grading of $G$ and $H$ are the same. For the components above $\B^{dist}$ the contribution to the Alexander grading depends only on the grid of that component (whether it is in the first $k$ grids or not), and does not depend on the chosen decomposition.

\end{definition}

Here we follow very closely the argument of \cite[Section 3.2]{D} and \cite{MOST}.  To each $E_k$ we associate a chain complex. Let $C(E_k)$ be the free $\Z-$module generated by $\S(E_k)$. In order to define the boundary operator, we need to introduce the set of rectangles $\RR(E_k)$. Each element of $\RR(E_k)$ is a rectangle $r$ which is a topological disk whose upper and lower edges are arcs of $\A$-curves, and whose left and right edges are arcs of $\B$-curves. We assume that the rectangles do not pass through the branched points, so the interior of $r$ has no $X$ or $O$ markings. 

Let $\x, \y $ be two generators in $\S (E_k)$ which agree along all but two components that lie on two vertical circles. We say that a rectangle $r\in \RR(E_k)$ connects $\x$ to $\y$ if it satisfies the following properties.  
\begin{itemize}
\item The lower-left and upper-right corners of $r$ are components of $\x$, and its lower-right and upper-left corners are components of $\y$.
\item $r$ does not contain any components of $\x$ in its interior.
\end{itemize}

We denote the set of rectangles connecting $\x$ to $\y$ by $\pi (\x,\y)$.  We omit the index $k$ from the notation, since it can be recovered from $\x$ and $\y$.

We fix a sign assignment $\SS$ of power $mn$. 
We also fix an orientation and an ordering of $\A$- and $\B$-curves, simultaneously for $\t G$ and $\t H$. This means that the index of the $\B$-curve $\gamma$ in $\t H$ is equal to the index of $\xi$ in $\t G$, and they have the same orientation. The remaining $\A$- and $\B$-curves in $\t G$ are naturally in correspondence with the curves of $\t H$, and the corresponding curves have the same order and orientation. This data induces an orientation and an ordering for the $\A$- and $\B$-curves of each $E_k$.
 
Therefore for $\x,\y \in \S(E_k)$ and any rectangle $r\in\pi(\x,\y)$ we can consider the formal rectangle $F(r)$ (with the orientation and the ordering that we fixed on the edges) associate to $r$ and compute $\SS(F(r))$. Note that we use the same sign assignment $\SS$ for all the $E_k$'s. For $\x\in \S(E_k)$ the boundary operator $\partial_{k}$ is defined as follows. 
$$\partial \x= \displaystyle \sum_{\y \in \S(E_k)} \left(\sum_{r\in \pi(\x,\y)} \SS(F(r)) \right) \y .$$

Note that if $\pi(\x,\y)=\emptyset$ then $\y$ appears with coefficient $0$. Since $\SS$ satisfies property (S-3) of the sign assignment, we have $\partial^2=0$ and we obtain a chain complex.

\begin{lemma}
There exist an anti-chain map $\Phi_k : C(E_k)\to C(E_{k+1})$ that preserves the Alexander grading.
\end{lemma}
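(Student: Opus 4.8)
The plan is to build the map $\Phi_k$ by counting pentagons between generators of $E_k$ and $E_{k+1}$, mimicking the commutation map in \cite{MOST} and \cite{D}, but with the extra bookkeeping forced by the $m$-fold cover and the $\Z$-coefficients. Recall that in the common diagram for $E_k$ and $E_{k+1}$ the two curves $\gamma$ (the lift of $\B^{dist}$ in the $(k+1)$-st grid of $E_k$) and $\xi$ (its counterpart in $E_{k+1}$) are drawn as curly arcs that cross each other in two points; call these intersection points $a$ (the one whose small bigon region is empty of markings, to be the distinguished intersection used for the "nearest point" generator) and the other as needed. For generators $\x\in\S(E_k)$ and $\y\in\S(E_{k+1})$ that agree outside the strip between $\gamma$ and $\xi$, one defines a \emph{pentagon} from $\x$ to $\y$ as an embedded disk whose boundary runs along arcs of $\A$-curves, $\B$-curves, $\gamma$ and $\xi$, with the appropriate corners at components of $\x$ and $\y$ and one corner at a point of $\gamma\cap\xi$; let $\mathrm{Pent}(\x,\y)$ be the set of such pentagons with no $\x$-component in the interior. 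I would then set
\[
\Phi_k(\x)=\sum_{\y\in\S(E_{k+1})}\Biggl(\sum_{p\in\mathrm{Pent}(\x,\y)}\SS(F(p))\Biggr)\y,
\]
where $\SS(F(p))$ is obtained by the same sign-assignment machinery of \cite{Sign}: a pentagon, like a rectangle, has a well-defined associated formal flow once we fix orientations and orderings of all the curves (including $\gamma$ and $\xi$), and the sign assignment of power $mn$ that we fixed in subsection~\ref{comm} assigns it a sign. One also checks directly that pentagons preserve the Alexander grading, using the description of $A(\x)$ above together with the fact that the distinguished $\B$-curve and the markings are positioned so that a pentagon crosses exactly the markings that account for the difference between $A_G$ and $A_H$; this is the same computation as in \cite{MOST}, adapted grid by grid.

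The heart of the argument is the chain-homotopy identity. I would show $\Phi_k\circ\partial_k + \partial_{k+1}\circ\Phi_k = 0$ (the sign making $\Phi_k$ an \emph{anti-}chain map, as in \cite{D}) by analyzing the ends of the one-dimensional moduli spaces of "pentagon followed by rectangle" and "rectangle followed by pentagon" domains, i.e. juxtapositions whose underlying region is a domain of index one higher. Each such composite region degenerates at its ends in exactly two ways — either as $(\text{rectangle in }E_k,\text{ pentagon})$ or as $(\text{pentagon},\text{ rectangle in }E_{k+1})$ — except for a small number of special thin regions (those that contain the distinguished bigon between $\gamma$ and $\xi$, or that are "forced" hexagon-type degenerations), and one must check these special regions cancel in pairs or contribute trivially. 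The sign bookkeeping is handled by the axioms (S-1), (S-2), (S-3) of Definition~\ref{def sign}: (S-3) gives the basic relation $F(r)\cdot F(s) = -F(s)\cdot F(r)$ recorded in Remark~\ref{signremark} when two flows are composed in the two possible orders, and the boundary-degeneration axioms control the contributions of the thin annular regions. Concretely, for each pair of ends of a given index-one composite region, the product of signs along the two orders of traversal differ by exactly $-1$ by (S-3), so the two contributions to $\Phi_k\partial_k + \partial_{k+1}\Phi_k$ cancel; summing over all regions gives the identity.

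The main obstacle I expect is precisely the classification and sign-analysis of the exceptional thin regions in the cover. In the classical grid case (\cite{MOST}) one knows exactly which "bad" domains occur — the ones containing the short bigon between the two curly curves — and one checks by hand that they cancel; in the branched cover the curly curve $\gamma$ (or $\xi$) may have segments in several grids, so a composite domain can wrap around in a more complicated way, and one has to be sure no new type of degeneration appears and that every exceptional region still has a canonical partner with opposite total sign. This is where the author's "new ingredient" — the sign assignment to $4m$-gons of Definition~\ref{oct} and its commutativity properties — is presumably used: the relevant exceptional degenerations factor through such $4m$-gons, and the commutativity property of that sign assignment is exactly what forces the cancellation. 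I would therefore organize the proof so that, after setting up $\Phi_k$ and the grading claim, the bulk of the work is: (i) enumerate the index-one composite domains; (ii) match up their ends; (iii) invoke (S-3)/(S-1)/(S-2) and the $4m$-gon commutativity to see each matched pair cancels; and (iv) conclude $\Phi_k\partial_k+\partial_{k+1}\Phi_k=0$.
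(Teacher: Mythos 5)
Your plan has the right shape at the top level (define a pentagon map counted with signs, check it preserves the Alexander grading, and prove the anti-chain-map identity by matching ends of composite index-two domains), but it contains two concrete errors, one of which is the sign definition at the heart of the construction.

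First, you write that one can set the weight of a pentagon $p$ to $\SS(F(p))$ because ``a pentagon, like a rectangle, has a well-defined associated formal flow.'' It does not: the sign assignment $\SS$ of Definition~\ref{def sign} has domain $\FF_N$, which consists only of formal bigons and formal rectangles. A pentagon has five edges and is neither, so $F(p)$ is simply undefined for a pentagon and $\SS(F(p))$ has no meaning. The paper sidesteps this by exploiting the fact that the ordering and orientation are chosen so that $\gamma$ and $\xi$ carry the \emph{same} index and orientation. Then for $p = \A_i\to\B_j\to\A_l\to\xi\to\gamma$ the two \emph{formal rectangles} $\A_i\to\B_j\to\A_l\to\xi$ and $\A_i\to\B_j\to\A_l\to\gamma$ are identical as formal data and hence have the same sign, and the sign $\epsilon(p)$ of the pentagon is declared to be this rectangle sign, with an extra $-1$ if $p$ lies to the right of $\gamma$ and $\xi$. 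Without that reduction to a genuine formal rectangle the sign is not defined, and without the left/right $\pm$ convention the anti-chain map identity does not close up. Your proposal is missing exactly this step.

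Second, you predict that the $4m$-gon sign of Definition~\ref{oct} and its commutativity (Lemma~\ref{indep-lemma}) are where the exceptional degenerations in the commutation argument are controlled. That ingredient is in fact never used in the commutation section: the $4m$-gons arise only in the stabilization argument of Subsection~\ref{stab}, where pseudo-domains of Type $R$ pass through the lift of the distinguished $O_1$--$X_1$ column. In the commutation step the composite regions are only rectangles composed with pentagons (and, for the chain homotopy, hexagons as in the following lemma), and the entire sign bookkeeping is handled by the (S-1)/(S-2)/(S-3) axioms and the left/right $\pm$ convention in the pentagon sign; the paper then invokes the earlier analysis in the two-sheeted case of \cite{D} and \cite{MOST}, which carries over grid by grid. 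So the obstacle you flag does not occur here, and conversely the actual subtlety (how to attach a sign to a pentagon using only a sign assignment on rectangles) is not addressed in your plan.
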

\begin{proof}

We denote the lift of $\B^{dist}$ in the $(k+1)^{th}$ grid of $E_k$ by $\gamma$ and in the $(k+1)^{th}$ grid of $E_{k+1}$ by $\xi$. Note that $E_k$ and $E_{k+1}$ differ in only one grid i.e. the $(k+1)^{th}$ grid. As we explained in the beginning of this subsection, we draw the grid diagrams $E_k$ and $E_{k+1}$ in the same diagram. Recall that in this diagram we leave the $m-1$ common grids of $E_k$ and $E_{k+1}$ unchanged, and in the $(k+1)^{th}$ grid we draw both $\gamma$ and $\xi$ as curly lines, such that if we omit the $\xi$ (resp. $\gamma$) we obtain $E_k$ (resp. $E_{k+1}$). See Figure~\ref{Pentagon}.

We denote the upper intersection point of $\gamma$ and $\xi$ by $a$ and their lower intersection point by $b$. Using a small perturbation we assume that neither $a$ or $b$ lies on an $\A$-curve.

Let $\t T$ be the Heegaard surface associated with $\t G$, this surface is obtained by gluing the diagrams of $E_k$ along the branched cuts. Note that if we glue the diagrams of $E_{k+1}$ instead of $E_k$ the resulting topological surface will be the same, and the only difference is the location of the lifts of the $\beta^{dist}$ i.e. $\gamma$ and $\xi$. Therefore we use $\t T$ for both $k$ and $k+1$.  Hence we can consider $\gamma$ and $\xi$ as curves on $\t T$.

We describe the map $\Phi$ on the set of generators of $E_k$. Given a generator $\x\in \S(E_k)$ and a generator $\y \in \S(E_{k+1})$, a pentagon $p$ connecting $\x$ to $\y$ is an embedded disk in $\t T$ such that $\x$ and $\y$ have the same components except at vertices of $p$. The boundary of $p$ consists of five arcs as follows. Starting from the $\x$ component on $\gamma$ and moving counter clockwise on the boundary of $p$ we traverse along an $\A$-circle $\A_i$. We reach the component of $\y$ on $\A_i$, continuing along the $\B$-curve $\B_j$ we reach the component of $\x$ along $\B_j$. Moving along the $\A$-curve $\A_l$ we reach the component of $\y$ on $\xi$. Going through $\xi$ we reach $a$, and continuing along $\gamma$ we arrive at the initial $\x$ component. We require all the angles of $p$ to be acute, and the interior of $p$ to be empty from $X$ and $O$ markings and $\x$ components. See Figure \ref{Pentagon}. We denote the set of pentagons between $\x$ and $\y$ by $Pent_{a}(\x,\y)$. In order to keep track of the boundary arcs, we represent such $p$ by $\A_i \to \B_j \to \A_l \to \xi \to \gamma$ .

Recall that we fixed a sign assignment $\SS$ of power $mn$, together with an orientation and ordering on all the $\A$- and $\B$-curves such that $\gamma$ and $\xi$ have the same orientation and index. This choice implies that the sign of the formal rectangles $\A_i \to \B_j \to \A_l \to \xi$ and $\A_i \to \B_j \to \A_l \to \gamma$ are the same. We define the sign of the pentagon $p=\A_i \to \B_j \to \A_l \to \xi \to \gamma$ to be the sign (resp. minus the sign) of the formal rectangle $\A_i \to \B_j \to \A_l \to \xi$ if $p$ is to the left (resp. right) of $\gamma$ and $\xi$, and denote it by $\epsilon(p)$.

For generator $\x\in \S(E_k)$ we set
$$\Phi_k(\x)=\displaystyle\sum_{\y\in \S(E_{k+1})} \sum_{p\in Pent_{a}(\x,\y)} \epsilon(p) \y .$$

Clearly $\Phi_k$ preserves the Alexander grading, since we considered pentagons with no marking inside them, hence the Alexander grading of such $\x$ and $\y$ are the same. The fact that $\Phi_k$ is an anti-chain map follows readily from the proof of \cite[Lemma 3.2]{D}. See also \cite{MOST}.

\end{proof}

\begin{lemma}
There exist an anti-chain map $\Psi_{k+1}:C(E_{k+1})\to C(E_{k})$ that preserves the Alexander grading.
\end{lemma}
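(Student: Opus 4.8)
The plan is to build $\Psi_{k+1}$ in exact analogy with $\Phi_k$, now counting pentagons anchored at the lower intersection point $b$ of $\gamma$ and $\xi$ rather than at the upper one $a$. Recall that our global choice of orientations and ordering gives $\gamma$ and $\xi$ the same index and orientation, so that for every $i,j,l$ the formal rectangles $\A_i\to\B_j\to\A_l\to\gamma$ and $\A_i\to\B_j\to\A_l\to\xi$ receive the same sign from $\SS$. For $\x\in\S(E_{k+1})$ and $\y\in\S(E_k)$, I would define a pentagon $p$ from $\x$ to $\y$ to be an embedded disk in $\t T$ with acute corners, empty of $X$ and $O$ markings and of $\x$ components, with $\x$ and $\y$ agreeing away from the corners of $p$, and whose boundary --- starting at the $\x$ component on $\xi$ and read counterclockwise --- runs along an $\A$-curve $\A_i$ to the $\y$ component on $\A_i$, then along a $\B$-curve $\B_j$ to the $\x$ component on $\B_j$, then along an $\A$-curve $\A_l$ to the $\y$ component on $\gamma$, then along $\gamma$ to $b$, and finally along $\xi$ back to the start. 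In arrow notation $p=\A_i\to\B_j\to\A_l\to\gamma\to\xi$; write $Pent_b(\x,\y)$ for the set of such pentagons. I would set $\epsilon(p)$ equal to the sign of the formal rectangle $\A_i\to\B_j\to\A_l\to\gamma$ when $p$ lies to the left of $\gamma$ and $\xi$, and to minus that sign when $p$ lies to the right, and then define
$$\Psi_{k+1}(\x)=\sum_{\y\in\S(E_k)}\sum_{p\in Pent_b(\x,\y)}\epsilon(p)\,\y .$$

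There are then two things to verify. Alexander-grading preservation is immediate from the definition: since the counted pentagons contain no $X$ or $O$ marking, $A(\x)=A(\y)$ whenever $Pent_b(\x,\y)\ne\emptyset$. For the anti-chain identity $\Psi_{k+1}\circ\partial_{k+1}=-\,\partial_k\circ\Psi_{k+1}$, I would run the argument used for $\Phi_k$ in \cite[Section 3.2]{D} (and \cite{MOST}), now with the roles of $E_k$ and $E_{k+1}$, and of $a$ and $b$, interchanged: expand the two composites as signed counts of composite domains from a generator of $E_{k+1}$ to a generator of $E_k$, namely pairs (rectangle in $E_{k+1}$, pentagon) on one side and (pentagon, rectangle in $E_k$) on the other; the composite domains that occur are disjoint unions of a rectangle and a pentagon, hexagons, or thin strips running along $\gamma$ or $\xi$. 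The first kind cancel between the two sides by commuting the two disjoint pieces; each hexagon admits exactly two decompositions of opposite type, whose contributions cancel by property (S-3) of the sign assignment; and the thin strips are controlled by (S-1) and (S-2). As for $\Phi_k$, I would either reproduce this bookkeeping or simply invoke \cite[Section 3.2]{D}.

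The step I expect to require the most care --- and the only genuine departure from the planar case --- is checking that the whole computation survives the passage to the $m$-sheeted branched surface $\t T$: every rectangle and pentagon entering a composite domain must be embedded in $\t T$ and must avoid the branch points $X_i$, $O_i$, and the pairing of composite domains must be compatible with the identification of $\S(E_{k+1})$ with $\S(E_k)$. This is handled just as in the construction of $\Phi_k$ and in the analysis underlying Lemma~\ref{hv-generators}: the domains in question have empty interiors, so they lift uniquely from the base grid, and the common index and orientation of $\gamma$ and $\xi$ make $\epsilon(p)$ well defined independently of which grids of $\t T$ the pentagon $p$ meets. Once this is settled, the anti-chain identity follows from the model computation, and $\Psi_{k+1}$ is the desired anti-chain map preserving the Alexander grading.
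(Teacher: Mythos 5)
Your proposal matches the paper's proof: the paper defines $\Psi_{k+1}$ by counting pentagons with vertex at $b$ from generators of $E_{k+1}$ to generators of $E_{k}$ with the sign $\epsilon(p)$ defined exactly as for $\Phi_k$, and then delegates the verification of the anti-chain and grading-preservation properties to \cite[Lemma 3.2]{D} and \cite{MOST}, just as you do. The additional bookkeeping you spell out (disjoint pieces, hexagons, thin strips, and the lift to $\t T$) is the content of those cited arguments, so your outline is consistent with the paper's treatment.
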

\begin{proof}
Similarly we define $\Psi_{k+1}:C(E_{k+1})\to C(E_{k})$ by considering pentagons with vertex at $b$ connecting a generator $\y\in \S(E_{k+1})$ to a generator $\x\in \S(E_{k})$, which we denote by $Pent_{b}(\y,\x)$. We set
$$\Psi_{k+1}(\y)=\displaystyle\sum_{\x\in \S(E_{k})} \sum_{p\in Pent_{b}(\y,\x)} \epsilon(p) \x .$$
The proof of \cite[Lemma 3.2]{D} shows that $\Psi_{k+1}$ satisfies the required conditions. See also \cite{MOST}.
\end{proof}

\begin{lemma}
For $0\leq k \leq m$ there exist two maps $H_k^{k+1}:C(E_k)\to C(E_k)$ and $H_{k+1}^{k}:C(E_{k+1})\to C(E_{k+1})$ such that the following identities hold.

\begin{eqnarray*}
\mathbb{I}+\Psi_{k+1}\circ\Phi_k+\partial\circ H_k^{k+1} + H_k^{k+1} \circ \partial =0  \\
\mathbb{I}+\Phi_k\circ\Psi_{k+1}+\partial\circ H_{k+1}^{k} + H_{k+1}^{k} \circ \partial =0 
\end{eqnarray*}

\end{lemma}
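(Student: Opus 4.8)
The plan is to construct the chain homotopies $H_k^{k+1}$ and $H_{k+1}^k$ by counting \emph{hexagons}, following the strategy of \cite[Lemma 3.2]{D} and \cite{MOST}, and to verify the two stated identities by a careful bookkeeping of the degenerations of composite domains. First I would fix, once and for all, the common Heegaard surface $\t T$ and view $\gamma$ and $\xi$ as the two lifts of $\B^{dist}$ inside the $(k+1)^{st}$ grid, with upper intersection point $a$ and lower intersection point $b$. The map $H_k^{k+1}:C(E_k)\to C(E_k)$ should be defined by counting (with signs coming from the sign assignment $\SS$ of power $mn$) embedded hexagons that start and terminate in generators of $E_k$, passing through both $\gamma$ and $\xi$: such a hexagon has boundary arcs along two $\A$-curves and two further $\B$-curves together with one arc on $\gamma$ and one arc on $\xi$, and it necessarily contains exactly one of $a,b$ as a corner. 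One checks $H_k^{k+1}$ preserves the Alexander grading exactly as for the pentagons, because the hexagons are required to be empty of $X$ and $O$ markings. The map $H_{k+1}^k$ is defined symmetrically using hexagons whose endpoints lie in $\S(E_{k+1})$.

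Next I would prove the first identity by analyzing the boundary of the moduli space of \emph{juxtaposed} domains of the relevant type. Concretely, one considers all ways a generator $\x\in\S(E_k)$ can be connected to a generator $\x'\in\S(E_k)$ by a domain $D$ that is a juxtaposition of either (i) a pentagon of type $Pent_a$ followed by a pentagon of type $Pent_b$ (this yields the $\Psi_{k+1}\circ\Phi_k$ term), (ii) a rectangle followed by a hexagon or a hexagon followed by a rectangle (these yield $\partial\circ H_k^{k+1}$ and $H_k^{k+1}\circ\partial$), or (iii) the ``thin'' bigon-like configuration bounded between $\gamma$ and $\xi$ whose only contribution is the identity (this yields $\mathbb{I}$). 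For each fixed such domain $D$ one shows its decompositions into admissible pieces come in pairs, except for one distinguished degenerate configuration; summing the signs $\SS(F(\cdot))$ of the pieces over all decompositions of $D$ and over all $D$ then gives zero, which is precisely the first displayed equation. The key input here, beyond the combinatorics of planar domains already present in \cite{MOST}, is that the sign assignment $\SS$ satisfies (S-1), (S-2), (S-3) of Definition~\ref{def sign}, together with the compatibility established earlier that the formal rectangles $\A_i \to \B_j \to \A_l \to \xi$ and $\A_i \to \B_j \to \A_l \to \gamma$ receive the same sign. The second identity is proved by the mirror-image argument with the roles of $E_k$ and $E_{k+1}$, of $\Phi_k$ and $\Psi_{k+1}$, and of $a$ and $b$ interchanged.

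The main obstacle I anticipate is the sign computation, not the domain combinatorics. In the $\Z_2$ setting of \cite{L} and \cite{MOST} the pairing of decompositions is all that is needed, but over $\Z$ one must check that the two members of each pair actually carry \emph{opposite} signs, and that the one unpaired term carries sign $+1$ so as to cancel the $\mathbb{I}$. This forces one to track how $\SS$ behaves under the relevant boundary degenerations of composite flows on $\t T$ (Type $\A$ versus Type $\B$, as in Figure~\ref{dege}), and in particular to distinguish the degenerations supported near $\gamma,\xi$ from those supported in the unchanged $m-1$ grids. Here the ``new ingredient'' advertised in the introduction enters: one assigns a sign to each $4m$-gon obtained by going around all $m$ lifts, as in Definition~\ref{oct}, and uses its commutativity properties to reduce the $\Z$-sign bookkeeping to the already-established properties (S-1)--(S-3) of $\SS$ in power $mn$. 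Once this sign ledger is in place the two identities follow formally, exactly as in \cite[Lemma 3.2]{D}.
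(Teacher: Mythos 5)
Your overall strategy is the right one and matches the paper: define $H_k^{k+1}$ and $H_{k+1}^k$ by counting hexagons, pair up the decompositions of juxtaposed domains, and use properties (S-1)--(S-3) of the sign assignment $\SS$ to show that paired decompositions carry opposite signs, with the unpaired thin annular region between $\gamma$ and $\xi$ contributing the $\mathbb{I}$ term. That part of your outline is sound.

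However, there are two concrete errors. First, your description of the hexagons is wrong. The hexagons counted by $H_k^{k+1}$ connect two generators of $E_k$, both of which have their distinguished component on $\gamma$, not $\xi$. Consequently the boundary cannot consist of ``two $\A$-curves and two further $\B$-curves together with one arc on $\gamma$ and one arc on $\xi$'' with only one of $a,b$ as a corner: in that configuration one endpoint of the $\xi$-arc would be a generator component on $\xi$, which would force the relevant generator to lie in $\S(E_{k+1})$. The hexagon in the paper's proof has boundary $\A_i \to \B_j \to \A_l \to \gamma \to \xi \to \gamma$, with one ordinary $\B$-arc, two separate $\gamma$-arcs, one $\xi$-arc, and \emph{both} $a$ and $b$ as corners; the sign $\epsilon(h)$ is then defined to be $\SS$ of the formal rectangle $\A_i \to \B_j \to \A_l \to \gamma$. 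Your combinatorial shape would not produce the correct cancellations, and the sign definition attached to it would not be coherent. Second, your appeal to the $4m$-gon sign assignment of Definition~\ref{oct} is misplaced: that machinery belongs to the stabilization argument, not to commutation. In the commutation proof, pentagons and hexagons involve only a single lift $\gamma$/$\xi$ of $\B^{dist}$ in one grid; the only extra input beyond (S-1)--(S-3) is the compatibility, arranged by fixing the orderings and orientations so that $\gamma$ and $\xi$ carry the same index and orientation, that the formal rectangles ending on $\gamma$ and on $\xi$ receive the same sign. No $4m$-gon arises here.
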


\begin{proof}

Given $\x,\y\in \S(E_k)$ a hexagon $h$ connecting $\x$ to $\y$ is   an embedded disk in the Heegaard surface $\t T$ such that its boundary is a path from $\x$ to $\y$ which consists of six arcs. If we start from the $\x$ component on $\gamma$, moving counter clockwise along an $\A$-curve $\A_i$, we reach a component of $\y$. Continuing in this way we traverse along $\B_j$ and $\A_l$ to reach a component of $\y$ on $\gamma$. Then we move along $\gamma$ to reach $b$, then along $\xi$ to reach $a$ and finally along $\gamma$ again to reach the starting component of $\x$. See Figure \ref{hexagon}. We represent such $h$ by $\A_i \to \B_j \to \A_l \to \gamma \to \xi \to \gamma$, and denote the set of hexagons connecting $\x$ to $\y$ with no $X$ or $O$ marking and no $\x$ component inside them by $Hex_k^{k+1}(\x,\y)$. Note that such hexagons lie to the left of $\gamma$ and $\xi$. Using the sign assignment $\SS$ of power $mn$ that we fixed to define the sign of pentagons, we define the sign of $h$ to be the sign of the formal rectangle $\A_i \to \B_j \to \A_l \to \gamma$, and denote it by $\epsilon(h)$.

Similarly for $\x',\y' \in \S(E_{k+1})$ we can consider the set of hexagons between $\x'$ and $\y'$ which will lie to the right of $\gamma$ and $\xi$, and we denote the set of all such hexagons by $Hex_{k+1}^k(\x',\y')$. The sign of such hexagon is defined to be the sign of the associated formal rectangle.

Given $\x\in\S(E_k)$ we set $$H_k^{k+1}(\x)=\displaystyle \sum_{\y\in \S(E_k)} \sum_{h\in Hex_k^{k+1}(\x,\y)} \epsilon(h) \y .$$ $H_{k+1}^k$ is defined similarly. The rest of the argument follows readily from the proof of \cite[Proposition 3.2 and 4.24]{MOST}.

\end{proof}

\begin{figure}[h]
\centerline{\includegraphics[scale=0.9]{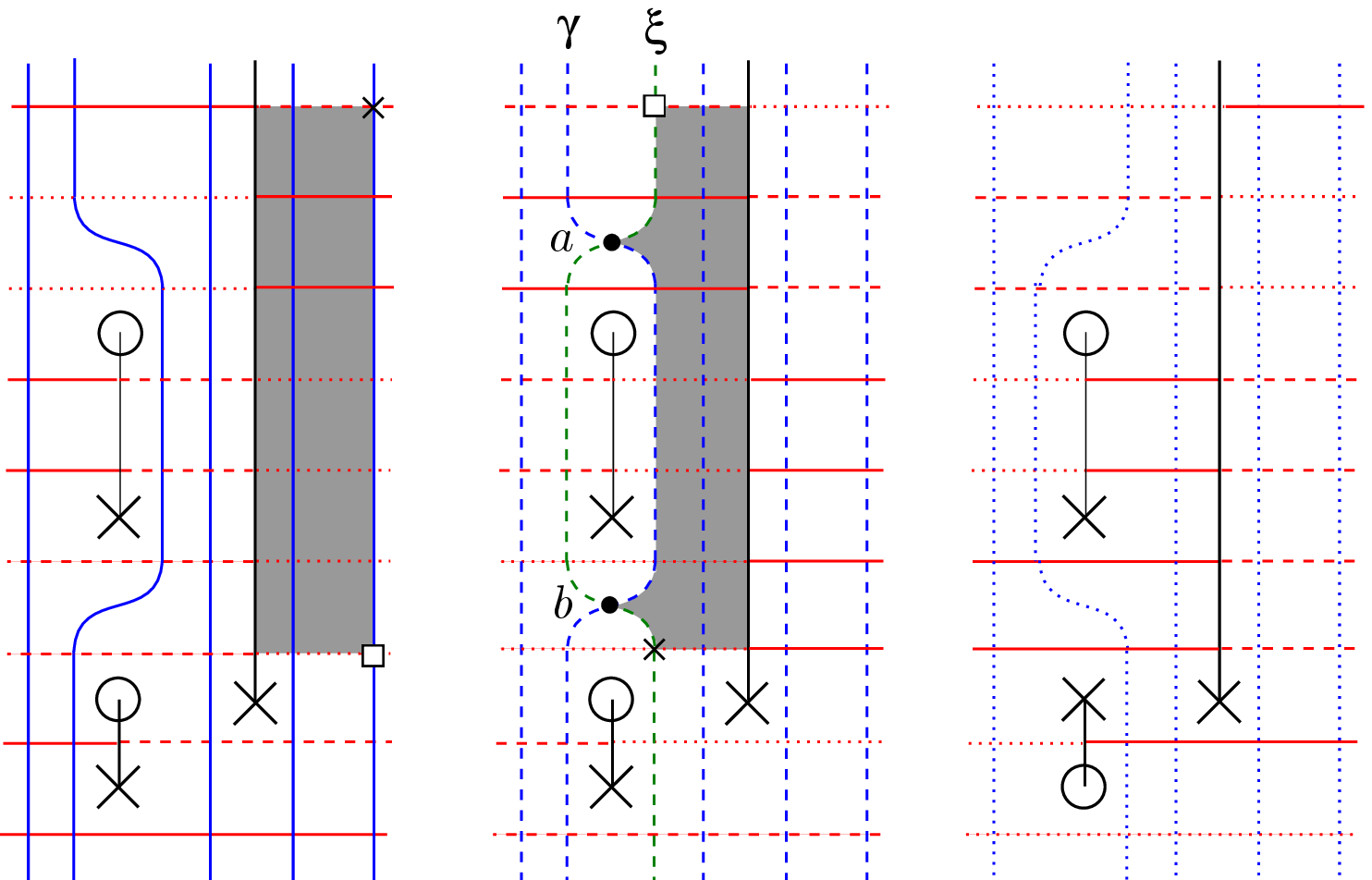}}
\caption {A typical hexagon in shown in gray.}
\label{hexagon}
\end{figure}

The above lemmas show that the chain homotopy between $C(E_k)$ and $C(E_{k+1})$ is a quasi-isomorphism. Composing these maps for $0\leq k \leq m-1$ we obtain a quasi-isomorphism between $C(\t G)$ and $C(\t H)$. Hence the knot Floer homology is invariant under commutation.


\subsection{Stabilization} \label{stab}

In this subsection we show that the stable knot Heegaard Floer homology is invariant under stabilization. We assume that the Heegaard diagram $H$ is obtained from $G$ by adding a new column and a new row. Let $X_1$ and $O_1$ be the markings in the new column of $H$. Since the markings $X_1$ and $O_1$ are in consecutive rows, using the commutation move we can assume that $O_1$ is adjacent to the $X$ marking in its row.  We denote the $\A$-curve between $O_1$ and $X_1$ by $\A_1$, the $\B$-curve to the left of $O_1$ is denoted by $\B_1$ and the $\B$-curve to right of $O_1$ is $\B_2$. See Figure~\ref{Stbl}. 

\begin{figure}[h]
\centering
\centerline{\includegraphics[scale=0.8]{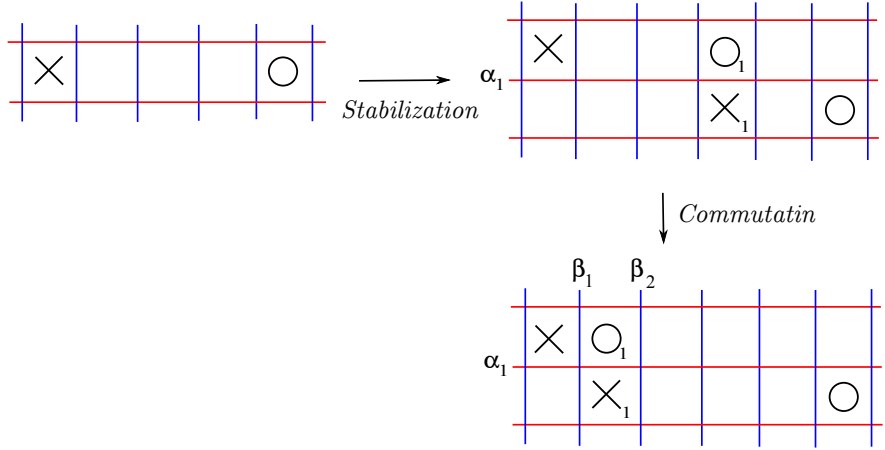}}
\caption{An stabilization followed by commutations moves.}
\label{Stbl}
\end{figure}

During this subsection we introduce the concept of a \emph{pseudo-domain}, which is a generalization of a domain. In order to prove the invariance under the stabilization, we will assign a sign to such pseudo-domains.

\subsubsection{R,L-Shape}

Let $\lambda$ be a curve around the line segment that connects $O_1$ to $X_1$ in the heegaard diagram $H$ of $K$. We denote the inverse image of $\lambda$ in the $k^{th}$ grid of the heegaard diagram $\t{H}$ by $\lambda_k$. 

\begin{definition} Let $\x,\y\in \S(\t H)$. A \emph{pseudo-domain} from $\x$ to $\y$ is a two-chain in the Heegaard diagram $\t H$ whose boundary consists of a path from $\x$ to $\y$ and possibly a number of copies of $\lambda_k$'s. We denote by $\sigma(\x,\y)$ the set of pseudo-domains from $\x$ to $\y$. Note that $\pi(\x,\y) \subset \sigma(\x,\y)$. We define psedo-domains only for combinatorial purposes.
\end{definition}

\begin{remark} In this definition we allow the two-chain to have a number of copies of $\lambda_k$'s in its boundary, but for most parts of the paper we only work with regions with at most one such curve. The reason that we allow multiples of $\lambda_k$'s is to be able to extend the $*$ operator to $\sigma(\x,\y)$.
\end{remark}

There is a natural $*$ operator on domains, given by adding the two-chains. We consider the natural extension of $*$ to pseudo-domains as follows.
Given $\mathbf{a} , \mathbf{b} ,\mathbf{c} \in S(\t H)$, $p \in \sigma(\mathbf{a}, \mathbf{b})$ and $p' \in \sigma (\mathbf{b} , \mathbf{c})$. We can add $p$ and $p'$ as two-chains. By the definition of pseudo-domains, their sum will be an element of $\sigma(\mathbf{a},\mathbf{c})$. In this way we get
$$*:\sigma(\mathbf{a}, \mathbf{b})\times \sigma (\mathbf{b} , \mathbf{c})\longrightarrow \sigma(\mathbf{a},\mathbf{c}) $$

\begin{definition}\label{prect} Let $\x, \y \in S(\t H)$ be two generators that differ exactly in two components located in the $k^{th}$ grid along $\t\B_1^k$ and $\t\B_2^k$. A \emph{punctured rectangle} $\a$ connecting $\x$ to $\y$ is topologically a punctured disk $\t D$ embedded in the Heegaard surface of $\t H$. We require the puncture of $\t D$ to be mapped to the $\lambda_k$. In this case, the boundary of $\a$ consists of $\lambda_k$ and a path from $\x$ to $\y$ consisting of four arcs such that $\partial \a \cap \bm{\t \beta} \subset \t\B_1^k \cup \t\B_2^k$. We denote by $A(\x , \y)$ the set of punctured rectangles from $\x$ to $\y$. Note that the width of a punctured rectangle is exactly $1$ and also $A(\x , \y)$ has at most 1 element. See Figure~\ref{punctured}.

We associate to ${\mathfrak{a}} \in A(\x , \y)$, the formal rectangle $F({\mathfrak{a}}): \x_f \rightarrow \y_f$ in $\FF_{m(n+1)}$ that has the boundary arcs $\partial{\mathfrak{a}}\cap (\bm{\t \A} \cup \bm{\t \B})$. Here $\x_f$ is the formal generator associated with $\x$. We define the sign of the punctured rectangle $\a$ by $\mu(\a) := \SS(F({\a}))$. 
\end{definition}

\begin{remark}
For ${\mathfrak{a}}\in A(\x , \y)$, there is a unique empty rectangle in $Rect(\y, \x)$. We denote it by $r_{\mathfrak{a}}$ and call it the complementary rectangle of $\a$. Note that the support of the union of ${\mathfrak{a}}$ and $r_{\mathfrak{a}}$ is topologically an annulus with a puncture.  From the definition of $\mu(\a)$ and the (S-2) property of the sign assignments we have $\mu(\a) \cdot \SS(F(r_{\a}))=-1$, where $F(r_{\a})$ is the formal rectangle corresponding to $r_{\a}$. 
\end{remark}

\begin{figure}[h]
\centerline{\includegraphics[scale=.5]{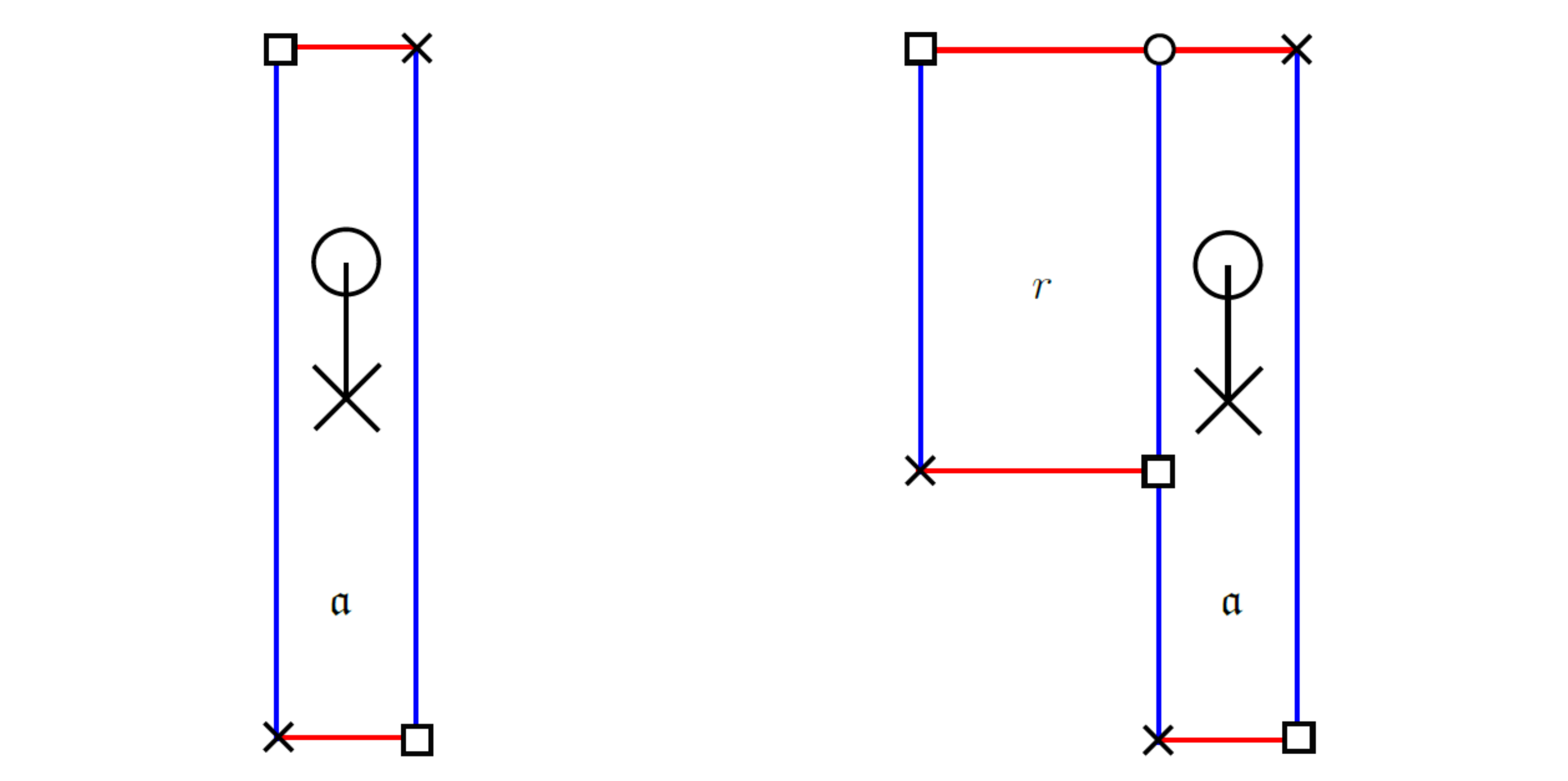}}
\caption{The picture on the left is a punctured rectangle. The picture on the right shows a punctured hexagon.}
\end{figure}

\begin{definition}
Let $\x, \y \in S(\t H)$ be two generators that differ exactly in three components located along $\t\B_1^k$,$\t\B_2^k$ and another $\beta$-curve (say $\t\B_i^j$). A \emph{punctured hexagon} $\hh$ with obtuse corner (or corner for short) at $c\in \t \B_1^k$ connecting $\x$ to $\y$ is the following data.

\begin{itemize}
\item A topologically embedded punctured disk in $\t U$ (the Heegaard surface of $\t H$) with puncture being mapped to $\lambda_k$.
\item If $c$ is in the $k^{th}$ grid (as above), the boundary of $\hh$ consists of $\gamma_k$ and a path from $\x$ to $\y$ consisting of six arcs such that $\partial \hh \cap \bm{\t \beta} \subset \t\B_1^k \cup \t\B_2^k \cup \t\B_i^j$.
\item We require that $\hh$ can be decomposed as the union of a punctured rectangle $\a \in A(\x , \z)$
and $r\in Rect(\z, \y)$, where $\z \in S(\t H)$ differs from $\x$
exactly along $\t \B_1^k$ and $\t \B_2^k$, the upper edge of $r$ and $\a$ are the same $\A$-curve, the lower edge of $r$ contains the point $c$, and the
right (resp. left) edge of $r$ is an arc in $\t \B_1^k$ (resp. $\t\B_i^j$), i.e. $\hh = \a *r$.
\end{itemize}
 We denote by $H(\x , \y)$ the set of punctured hexagons from $\x$ to $\y$. See Figures \ref{punctured} and \ref{LGeneral}.

Note that $c$ is the unique component of $\y$ on $\t \B_1^k$, and the third condition implies that $\hh$ has an obtuse corner at $c$.

\begin{figure}[h]
\centerline{\includegraphics[scale=1]{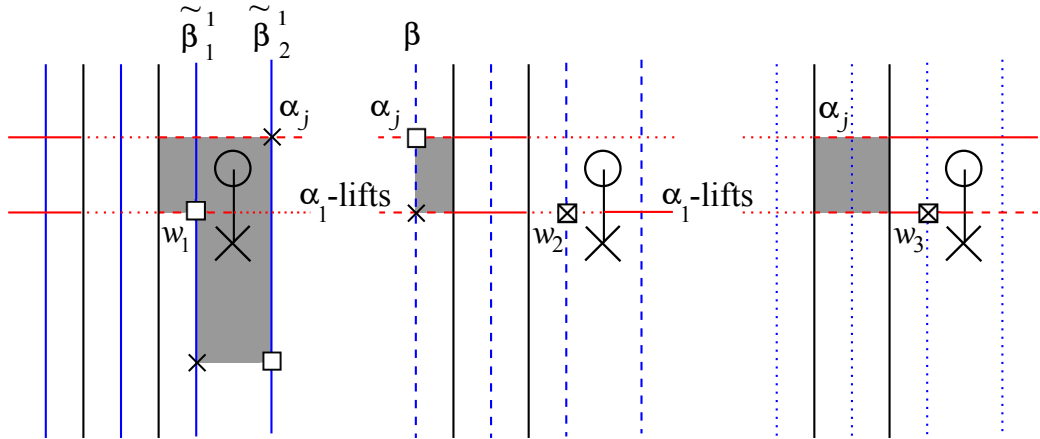}}
\caption {A punctured hexagon from $\x$ to $\y$. The generator $\x$ is shown with crosses and $\y$ is shown with hollow squares. The branched cuts are shown in black.}
\label{LGeneral}
\end{figure}

We define the sign of a punctured hexagon $\hh = \a *r$ as $$\mu(\hh) := \mu(\a) \cdot \SS(F(r)),$$
\noindent where $F(r)$ is the formal rectangle associate with the empty rectangle $r$. 
\end{definition}

\begin{notation}\label{I-index}
We decompose the set of generators of the Heegaard diagram $\t H$ according to the position of the components of a generator on $\t\alpha^i_1$ for $i=1,\cdots,m$. 
We represent the type of each generator with an $m$-tuple with entries $I$, $J$ and $N$. The $i^{th}$ entry is $I$ if the component on $\t\alpha^i_1$ is on one of the lifts of $\B_1$ (i.e. $\t\beta_1^j$ for some $j$). The $i^{th}$ entry is $J$ when the $\t\alpha_1^i$ component is on one of the lifts of $\beta_2$. The $N$ in the $i^{th}$ entry shows that the generator has its $\t\alpha^i_1$ component neither on the lifts of $\beta_1$ nor on the lifts of $\beta_2$. We denote the set of all such $m$-tuples with $\I_m$. Hence we have the following decomposition for the set of generators:

$$\S(\t H)= \displaystyle\coprod_{\omega \in \I_m} \omega $$

Having the above decomposition of the set of generators of $C(\t H)$ we get a decomposition of $C(\t H)$ as the direct sum of the sub-modules generated by the generators of the same type.

$$C(\t H)= \displaystyle\bigoplus_{\omega \in \I_m} C^{\omega} $$

 Let $k$ be between $1$ and $m$. We denote by $\I_m^{J,\leq k}$ (resp. $\I_m^{J,k}$) the subset of $\I_m$ consisting of sequences that have at most (resp. exactly) $k$ entries equal to $J$. Let $C^{J,\leq k}$ (resp. $C^{J,k}$) be the complex generated by generators of type $\I_m^{J, k}$ (resp. $\I_m^{J,k}$). Similarly we can define $\I_m^{I,\leq k}$ and $\I_m^{I,k}$ (resp. $\I_m^{N,\leq k}$ and $\I_m^{N,k}$) by replacing the role of $J$ by $I$ (resp. $N$).

\end{notation}

\begin{notation}
We denote the intersection point of $\A_1$ and $\B_1$ in the Heegaard diagram $H$ by $w$. Let $w_i$ be the lift of $w$ to the $i^{th}$ grid of $\t H$. Note that $w_i$ is the intersection of a lift of $\A_1$ with $\t\B_1^i$. 
\end{notation}

\begin{definition} \label{Ldomain} Let $\x \in S(\t H)$ and $\y \in \I_m^{I,m} \subset S(\t H)$. Let $\Omega=\left\lbrace i :  w_i\in\x\right\rbrace$ and $\{ j_1,\cdots,j_k \}:=\{1,\cdots,n \} \setminus \Omega$. A pseudo-domain $p\in \sigma (\x,\y)$ is called of \emph{Type} (L,$k$) (or \emph{Type L} when there is no confusion) if the following condition is satisfied:

\begin{itemize}

\item  For each $l$ with $1\leq l \leq k$ there exists a generator $\z_l \in S(\t H)$ and a punctured hexagon $\hh_l\in H(\z_{l-1},\z_l)$ (we let $\z_0=\x$) with obtuse corner at $w_{j_l}$ such that $p=\displaystyle\bigcup_{l=1}^k \hh_l$. We define the sign of $p$ by $$\displaystyle\mu(p):= \prod_{l=1}^k \mu(\hh_l).$$

\end{itemize}

Note that each $\hh_j$ is the union of two regions (a rectangle and a punctured rectangle), so the above definition is independent of the order of the product. See Remark~\ref{signremark} and Remark~\ref{signremark2} below. If $\Omega=\{1,\cdots,n \}$ then $\x=\y$, and $p$ is the trivial domain. In this case we set $\mu(p):= 1$.  
\end{definition}

\begin{remark}\label{signremark2}
Note that the order of terms in the product $\prod_{l=1}^k \mu(\hh_l)$ is not important since each term $\mu(\hh_l)$ is the product of the signs of two pseudo-domains, and if we switch the order of two punctured hexagons the sign is multiplied by $(-1)^4=1$.
\end{remark}

\begin{definition} \label{oct} For $\x \in S(\t H)$ and $\y \in \I_m^{I,m} \subset S(\t H)$, a \emph{$4m$-gon} $\theta \in \sigma(\x , \y)$ is topologically an embedded disk in $\t U$ (the Heegaard surface of $\t H$) whose boundary is a path form $\x$ to $\y$ that consists of $4m$ arcs, arranged as follows. Starting from the component of $\x$ on $\t\B_1^1$, we traverse the boundary with the orientation that comes from the $4m$-gon. We go along an $\A$-curve to meet a $\y$ component. We traverse a $\B$-curve to reach a component of $\x$ on one of the lifts of $\A_1$, then we go through that lift, passing the branch cut connecting $O_1$ and $X_1$, we meet a component of $\y$ that is on the $\t\B_1^2$. Since $\y \in \I_m^{I,m}$ this component of $\y$ is located at $w_2$. We go through $\t\B_1^2$ and get to a component of $\x$ on $\t\B_1^2$. We continue in this way and go through the components of $\x$ on $\t\B_1^3,\cdots,\t\B_1^{m-1},\t\B_1^m$ and also $w_3,\cdots,w_m,w_1$ and return to the component of $\x$ on $\t\B_1^1$ that we started from. The interior of $\theta$ is empty from the components of $\x$ and basepoints other that $X_1$. See Figure.~\ref{4m-gon}. 

\begin{figure}[h]
\centerline{\includegraphics[scale=1]{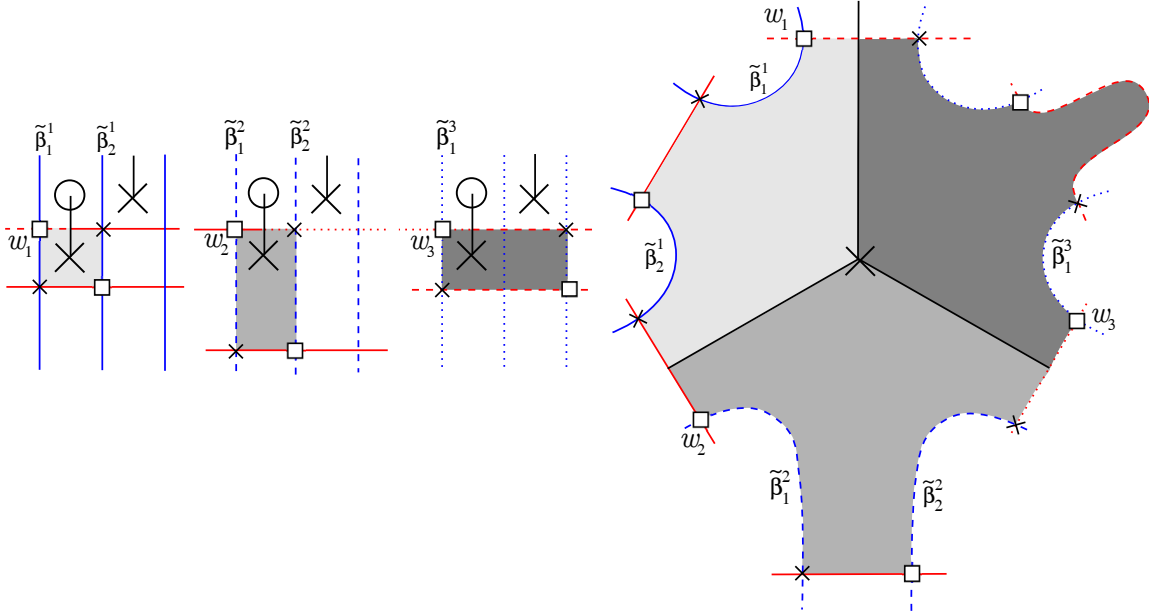}}
\caption {A $12$-gon.}
\label{4m-gon}
\end{figure}

\end{definition}

Given $\x \in S(\t H)$ and $\y \in \I_m^{I,m} \subset S(\t H)$, let $\theta \in \sigma(\x , \y)$ be a $4m$-gon. For simplicity we denote the edges of $\theta$ by $a_1,b_1,a_2,b_2,\cdots,a_{2m},b_{2m}$, in a way that each $a_i$ is an arc of an $\A$-curve and each $b_j$ is an arc of a $\B$-curve. 
We change $a_1$ by a finger move and make it intersect $b_2,b_3,\cdots,b_{2m-1}$. See Figure.~\ref{4m}.

\begin{figure}[h]
\centerline{\includegraphics[scale=1]{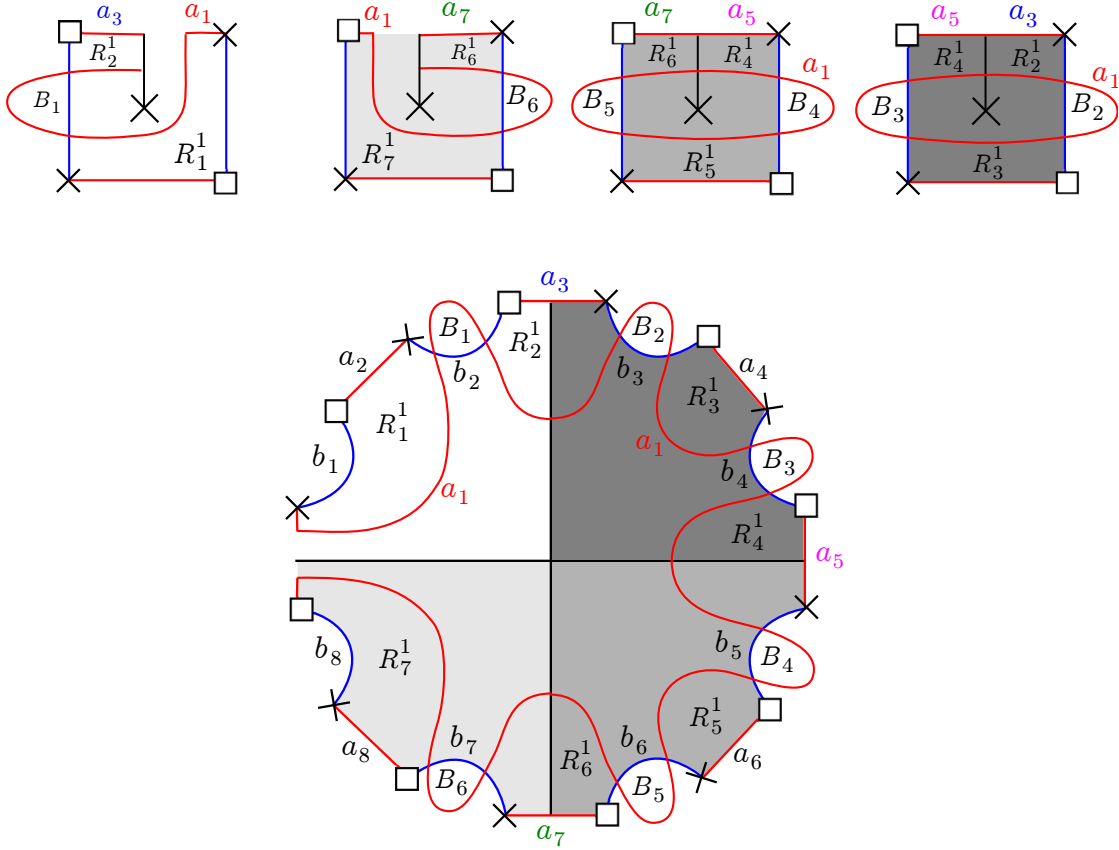}}
\caption {A 4m-gon and the required finger moves to define the sign assignment.}
\label{4m}
\end{figure}

For $i=1,\cdots,2m-1$ we define $R_i^1$ to be the rectangle defined by $a_{i+1} \rightarrow b_i \rightarrow a_1 \rightarrow b_{i+1}$. For $j=1,\cdots,2m-2$ we take $B_j$ to be the bigon defined by $a_1$ and $b_{j+1}$. See Figure.~\ref{4m}. We denote the region obtained from $B_j$ with the reverse orientation by $-B_j$. Note that if we use $R_1^1$ followed by $-B_1$, then $R_2^1$ followed by $-B_2$ and so on till we reach $R_{2m-1}^1$, this composition takes us from $\x$ to $\y$. 

Using the convention of Remark~\ref{signremark} and notations of Figure.~\ref{drawing1}, the S-1 and the S-3 properties of the sign assignment imply that:
$$
\begin{array}{rl}
\SS(F(-B_{2i-1}))\SS(F(R_{2i}^1))\SS(F(-B_{2i}))=&
\SS(CD)\SS(A)\SS(BC)
\\
=&
-\SS(AC)\SS(D)\SS(BC)
\\
=&-\SS(AC)
\\
=&
-\SS(F({R'}_{2i}^1))
\end{array}
$$
\noindent in which ${R'}_{2i}^1$ is the rectangle defined by the same boundary curves $a_{2i+1} \rightarrow b_{2i} \rightarrow a_1 \rightarrow b_{2i+1}$ as ${R}_{2i}^1$, but has different initial and terminal generators. Note that in Figure~\ref{drawing1}, $R_{2i}^1$ (resp. ${R'}_{2i}^1$) has the underlying region $A$ (resp. $AC$).

In order to distinguish between these two formal rectangles we use the following notation. Let $p_i$ be the common vertex of $B_i$ and $R^1_i$, and $q_i$ be the common vertex of $B_i$ and $R^1_{i+1}$. With this notation  $-B_i$ connects a generator $\x_i$ to $\y_i$ that differ in exactly one component. The aforementioned component of $\x_i$ and $\y_i$ are $p_i$ and $q_i$ respectively.

The vertices of ${R}_{2i}^1$ on the curve $a_1$ are $q_{2i-1}$ and $p_{2i}$, and the vertices of ${R'}_{2i}^1$ on $a_1$ are $p_{2i-1}$ and $q_{2i}$. We will say that ${R}_{2i}^1$ is the \emph{actual} rectangle obtained by $a_{2i+1} \rightarrow b_{2i} \rightarrow a_1 \rightarrow b_{2i+1}$ after the finger move, and ${R'}_{2i}^1$ is the \emph{virtual} rectangle obtained from the same boundary curves. The motivation for calling ${R}_{2i}^1$ an actual rectangle comes from Figure~\ref{4m}. This actual rectangle is visualized in this picture, while the virtual one is a formal rectangle that we use in our computations.

\begin{figure}[h]
\centerline{\includegraphics[scale=1]{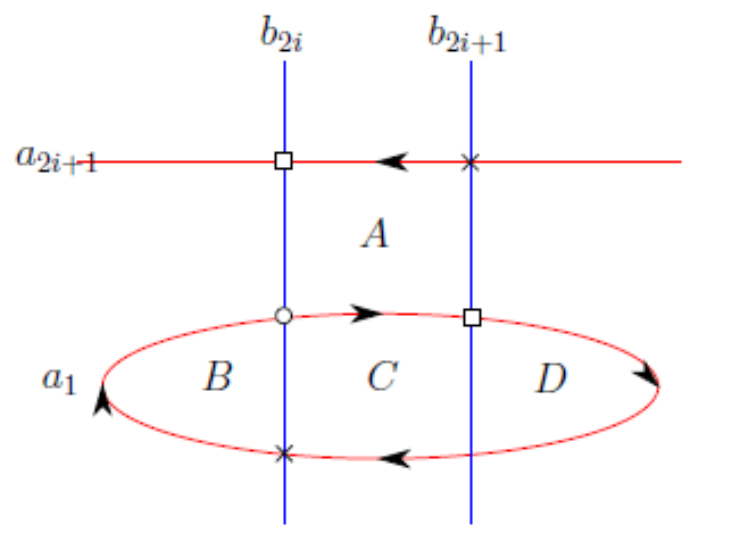}}
\caption{A pictorial representation of composition of formal flows.}
\label{drawing1}
\end{figure}

We define the sign of the $2m$-gon $\theta$ as follows.
\begin{small}
$$\begin{array}{rl}
\mu(\theta,a_1)
&:=\SS(F(R_{1}^{1})) \cdot\SS(F(-B_{1}))\cdot\SS(F(R_{2}^1))\cdot\SS(F(-B_{2}))\cdot\SS(F(R_{3}^{1}))\cdots \SS(F(R_{2m-1}^{1}))
\\
&=(-1)^{m-1} \SS(F(R_{1}^{1}))\cdot \SS(F({R'}_{2}^{1}))\cdot \SS(F(R_{3}^{1}))\cdot \SS(F({R'}_{4}^{1})) \cdots \SS(F(R_{2m-1}^{1})) .
\end{array} $$
\end{small}

The term $a_1$ in $\mu(\theta,a_1)$ indicates that we used the finger move on $a_1$ in order to define the sign of $\theta$. 

Using the same language as above, we can use the finger move on $a_k$ for any $k\in \{1,\cdots,2m\}$. Let $R_{i}^{k}$ be the \emph{actual} rectangle defined by $a_{k+i} \rightarrow b_{k+i-1} \rightarrow a_k \rightarrow b_{k+i}$ (which will appear after the finger move), and ${R'}_{i}^{k}$ be the \emph{virtual} rectangle defined by $a_{k+i} \rightarrow b_{k+i-1} \rightarrow a_k \rightarrow b_{k+i}$ (the indices are considered in a cyclic way modulo $2m$, that is $a_{i}=a_{i+2m}$). 
We define:
$$ \mu(\theta,a_k):=(-1)^{m-1}\SS(F(R_{1}^{k}))\cdot \SS(F({R'}_{2}^{k}))\cdot \SS(F(R_{3}^{k}))\cdot \SS(F({R'}_{4}^{k})) \cdots \SS(F(R_{2m-1}^{k})).$$

We prove that the sign is independent of the edge that we use for the finger move. More precisely we have the following lemma. 

\begin{lemma}\label{indep-lemma}
For any $1\leq i,j \leq 2m$, we have $\mu(\theta,a_i)=\mu(\theta,a_j)$.
\end{lemma}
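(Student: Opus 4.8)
The plan is to show that changing the finger-move edge from $a_k$ to the cyclically adjacent edge $a_{k+1}$ (equivalently $a_{k-1}$) leaves $\mu(\theta,\cdot)$ unchanged; since the $4m$-gon has $2m$ alpha-edges arranged cyclically, transitivity then gives $\mu(\theta,a_i)=\mu(\theta,a_j)$ for all $i,j$. So it suffices to prove $\mu(\theta,a_k)=\mu(\theta,a_{k+1})$ for a single $k$, and by the cyclic symmetry of the definition we may take $k=1$.

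First I would set up both computations explicitly. For the finger move on $a_1$ we have the product $(-1)^{m-1}\SS(F(R_1^1))\SS(F({R'}_2^1))\SS(F(R_3^1))\cdots\SS(F(R_{2m-1}^1))$, where each factor is a formal rectangle whose one horizontal edge is $a_1$ and whose other horizontal edge is $a_{i+1}$. For the finger move on $a_2$ we get the analogous product of rectangles whose common horizontal edge is $a_2$. The two families of rectangles differ: in the $a_1$-picture, $R_i^1$ has edges $a_{i+1}\to b_i\to a_1\to b_{i+1}$, while in the $a_2$-picture, $R_i^2$ has edges $a_{i+2}\to b_{i+1}\to a_2\to b_{i+2}$. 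The key geometric observation is that both products compute the same quantity: the sign of the composite flow from $\x$ to $\y$ obtained by "combing" the $4m$-gon, and the sign-assignment axioms (S-1), (S-2), (S-3) force any two such combings to agree up to the overall powers of $(-1)$ that are already built into the formulas. Concretely, I would introduce the sequence of intermediate generators visited as one traverses $\partial\theta$, and express $\mu(\theta,a_k)$ as $(-1)^{m-1}$ times an alternating product of $\SS$-values along a chain of flows connecting $\x$ to $\y$ through those generators; then use (S-3) (in the form of Remark~\ref{signremark}, $F(r)F(s)=-F(s)F(r)$ for disjoint flows) to reorder, and (S-1)/(S-2) to cancel the bigon/annulus contributions, arriving at a canonical expression in terms of $\SS$ evaluated on, say, the "long thin" rectangles spanning from the start to each corner.

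The cleanest route I expect is an induction on $m$. For $m=1$ there are only two alpha-edges $a_1,a_2$ and the statement $\mu(\theta,a_1)=\mu(\theta,a_2)$ is (essentially) the standard square/commutativity identity for sign assignments on rectangles, following from (S-3) applied to the single rectangle together with (S-1). For the inductive step, I would peel off one of the $2$-periodic "teeth" of the $4m$-gon — the portion of $\partial\theta$ passing through one branch cut, consisting of a pair $a_j,b_j,a_{j+1},b_{j+1}$ — realize it via a punctured rectangle and an empty rectangle as in Definition~\ref{prect}, absorb its contribution using the compatibility built into $\mu(\hh):=\mu(\a)\cdot\SS(F(r))$, and reduce to a $4(m-1)$-gon for which the inductive hypothesis applies. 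Throughout, I would keep careful track of how the finger move on $a_k$ versus $a_{k+1}$ changes exactly two of the rectangle factors (the ones straddling the "seam" of the relabeling), and show that these two changes are related by one application of (S-3) plus one of (S-1), contributing a net factor of $+1$ after accounting for the $(-1)^{m-1}$ prefactor.

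The main obstacle will be bookkeeping: correctly identifying, for each $i$, the initial and terminal formal generators of $R_i^k$ and of the virtual rectangles ${R'}_i^k$ — which, as the text stresses, share boundary curves but differ in their endpoints ($q_{2i-1},p_{2i}$ versus $p_{2i-1},q_{2i}$) — and verifying that the disjointness hypotheses needed to invoke Remark~\ref{signremark} genuinely hold for the relevant pairs of flows as one slides the finger move across a branch cut. The subtlety specific to the branched-cover setting (absent in \cite{MOST}, \cite{D}) is that one of the "rectangles" in each tooth is really a punctured rectangle $\a$ with $\mu(\a)\cdot\SS(F(r_\a))=-1$ by (S-2); I would need to confirm that these punctures enter symmetrically in the $a_k$- and $a_{k+1}$-computations, so that their contributions cancel in the comparison. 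Once the endpoint bookkeeping is pinned down, the algebraic cancellation is routine application of (S-1) and (S-3), exactly as in the displayed computation preceding the lemma.
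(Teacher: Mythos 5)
Your overall framing is right: it does suffice to show $\mu(\theta,a_k)=\mu(\theta,a_{k+1})$ and invoke transitivity, which is exactly how the paper begins. But both concrete routes you sketch after that have problems, and neither one is what the paper does.

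The induction on $m$ does not get off the ground. A $4m$-gon is an embedded disk in the Heegaard surface of $\widetilde H$ whose existence and shape are dictated by the $m$-fold branched-cover structure (the boundary alternates through $w_1,\dots,w_m$ and the branch cuts). ``Peeling off a tooth'' produces a smaller two-chain, but it is not a $4(m-1)$-gon in the sense of Definition~\ref{oct}, nor does it live in any Heegaard diagram to which an inductive hypothesis could apply -- there is no $\Sigma_{m-1}(K)$ sitting inside the picture. Also, punctured rectangles and the identity $\mu(\a)\cdot\SS(F(r_\a))=-1$ play no role in $\mu(\theta,a_k)$ at all: the $4m$-gon sign is defined purely from the actual rectangles $R_i^k$ and virtual rectangles ${R'}_i^k$ arising from the finger move, with the $(-1)^{m-1}$ already absorbing the bigon contributions, so there is nothing to ``cancel symmetrically.''

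Your first route is closer in spirit but cites the wrong tool. Remark~\ref{signremark} gives anti-commutativity $F(r)F(s)=-F(s)F(r)$ for \emph{disjoint} flows, whereas consecutive factors in the product defining $\mu(\theta,a_k)$ share the edge $a_k$ and share corners, so they are not disjoint and cannot simply be reordered. The paper's actual argument introduces a new auxiliary formal rectangle $a_2\to b_1\to a_1\to b_{2i}$ (not among the $R_i^k$ or ${R'}_i^k$) and applies the full (S-3) property twice -- once to commute it past ${R'}_{2i}^1$ and once past $R_{2i+1}^1$, producing $R_{2i-1}^2\cdot {R'}_{2i}^2$ and shifting the auxiliary rectangle's index from $b_{2i}$ to $b_{2i+2}$. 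Telescoping this for $i=1,\dots,m-1$, and then closing the loop with the $180^\circ$-rotation identity $(a_2\to b_1\to a_1\to b_{2m})=(a_1\to b_{2m}\to a_2\to b_1)$, converts $\mu(\theta,a_1)$ term-by-term into $\mu(\theta,a_2)$. That precise chain of identities is the content you are missing; ``reorder via (S-3) to a canonical expression'' is not a proof until you exhibit the chain.
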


\begin{proof}
By symmetry it is enough to show that $\mu(\theta,a_1)=\mu(\theta,a_2)$.

During this proof we will use several formal rectangles, that do not neccesairly have a topological representation on the Heegaard diagram.  For example, the diagrams in Figure \ref{drawing2} are not part of the Heegaard diagram. They can be thought of as a visual representation of certains flows, that enable us to use the S-3 property of the sign assignment and obtain the required identities. It should be noted that in any expression regarding the sign of formal flows, the initial and terminal generators must be compatible.
 
Using the S-3 property of the sign assignment for the two diagrams in Figure ~\ref{drawing2}, we obtain the following identities.


\begin{figure}[h]
\centerline{\includegraphics[scale=.4]{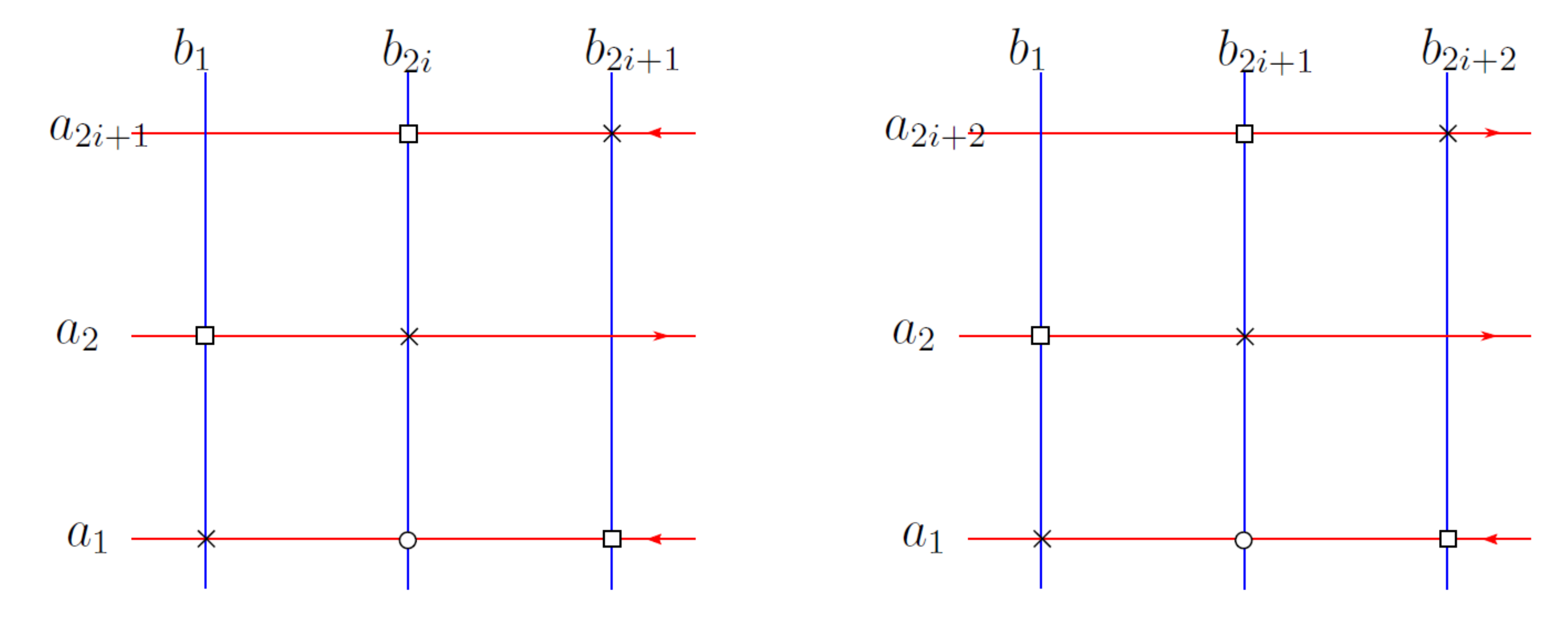}}
\caption{A pictorial representation of composition of formal rectangles.}
\label{drawing2}
\end{figure}

\begin{footnotesize}
$$\begin{array}{l}
\SS(F(a_{2} \rightarrow b_1 \rightarrow a_1 \rightarrow b_{2i}))
\cdot\SS(F({R'}_{2i}^{1}))
\cdot\SS(F({R}_{2i+1}^{1}))
\\
=
\SS(F(a_{2} \rightarrow b_1 \rightarrow a_1 \rightarrow b_{2i}))
\cdot\SS(F(a_{2i+1} \rightarrow b_{2i} \rightarrow a_{1} \rightarrow b_{2i+1}))
\cdot\SS(F(a_{2i+2} \rightarrow b_{2i+1} \rightarrow a_1 \rightarrow b_{2i+2}))
\\ 
=
- \SS(F(a_{2i+1} \rightarrow b_{2i} \rightarrow a_2 \rightarrow b_{2i+1}))\cdot\SS(F(a_{2} \rightarrow b_{1} \rightarrow a_{1} \rightarrow b_{2i+1}))\cdot\SS(F(a_{2i+2} \rightarrow b_{2i+1} \rightarrow a_1 \rightarrow b_{2i+2}))
\\
=
\SS(F(a_{2i+1} \rightarrow b_{2i} \rightarrow a_2 \rightarrow b_{2i+1}))\cdot\SS(F(a_{2i+2} \rightarrow b_{2i+1} \rightarrow a_2 \rightarrow b_{2i+2}))
\cdot\SS(F(a_{2} \rightarrow b_{1} \rightarrow a_{1} \rightarrow b_{2i+2}))
\\
=
\SS(F({R}_{2i-1}^{2}))
\cdot\SS(F({R'}_{2i}^{2}))
\cdot\SS(F(a_{2} \rightarrow b_1 \rightarrow a_1 \rightarrow b_{2i+2}))
\end{array}$$
\end{footnotesize}

Using this equation for $i=1,\cdots,m-1$, allows us to relate $\mu(\theta,a_1)$ and $\mu(\theta,a_2)$.
\begin{small}
$$\begin{array}{rl}
 \mu(\theta,a_1)=
&
(-1)^{m-1} \SS(F(R_{1}^{1}))\cdot \SS(F({R'}_{2}^{1}))\cdot \SS(F(R_{3}^{1}))\cdot \SS(F({R'}_{4}^{1})) \cdots \SS(F(R_{2m-1}^{1})) 
\\
=&
(-1)^{m-1}
\SS(F(R_{1}^{1}))\cdot \displaystyle\left( \prod_{i=1}^{m-1} \SS(F({R'}_{2i}^{1}))\cdot \SS(F(R_{2i+1}^{1})) \right)
\\
=&
(-1)^{m-1}
\SS(F(a_{2} \rightarrow b_1 \rightarrow a_1 \rightarrow b_{2})) \cdot
 \displaystyle \left(\prod_{i=1}^{m-1} \SS(F({R'}_{2i}^{1}))\cdot \SS(F(R_{2i+1}^{1}))\right)
\\
=&
(-1)^{m-1}
\displaystyle \left(\prod_{i=1}^{m-1}
\SS(F(R_{2i-1}^{2}))\cdot \SS(F({R'}_{2i}^{2}))\right)
\cdot  \SS(F(a_{2} \rightarrow b_{1} \rightarrow a_{1} \rightarrow b_{2m}))
\\
=&
(-1)^{m-1}
\displaystyle \left(\prod_{i=1}^{m-1}
\SS(F(R_{2i-1}^{2}))\cdot \SS(F({R'}_{2i}^{2}))\right)
\cdot  \SS(F(a_{1} \rightarrow b_{2m} \rightarrow a_{2} \rightarrow b_{1}))
\\
=&
(-1)^{m-1}
\left(
\displaystyle \prod_{i=1}^{m-1}
\SS(F(R_{2i-1}^{2}))\cdot \SS(F({R'}_{2i}^{2}))
\right) \cdot \SS(F(R_{2m-1}^{2}))
\\
=&
(-1)^{m-1}
\SS(F(R_{1}^{2}))\cdot \SS(F({R'}_{2}^{2}))\cdot \SS(F(R_{3}^{2}))\cdot \SS(F({R'}_{4}^{2})) \cdots \SS(F(R_{2m-1}^{2})) 
\\
=&  \mu(\theta,a_2)
\end{array}
$$

\end{small}

Note that we used the fact that $(a_{2} \rightarrow b_{1} \rightarrow a_{1} \rightarrow b_{2m})=(a_{1} \rightarrow b_{2m} \rightarrow a_{2} \rightarrow b_{1})$ (See Figure~\ref{drawing3}).


\begin{figure}[h]
\centerline{\includegraphics[scale=.4]{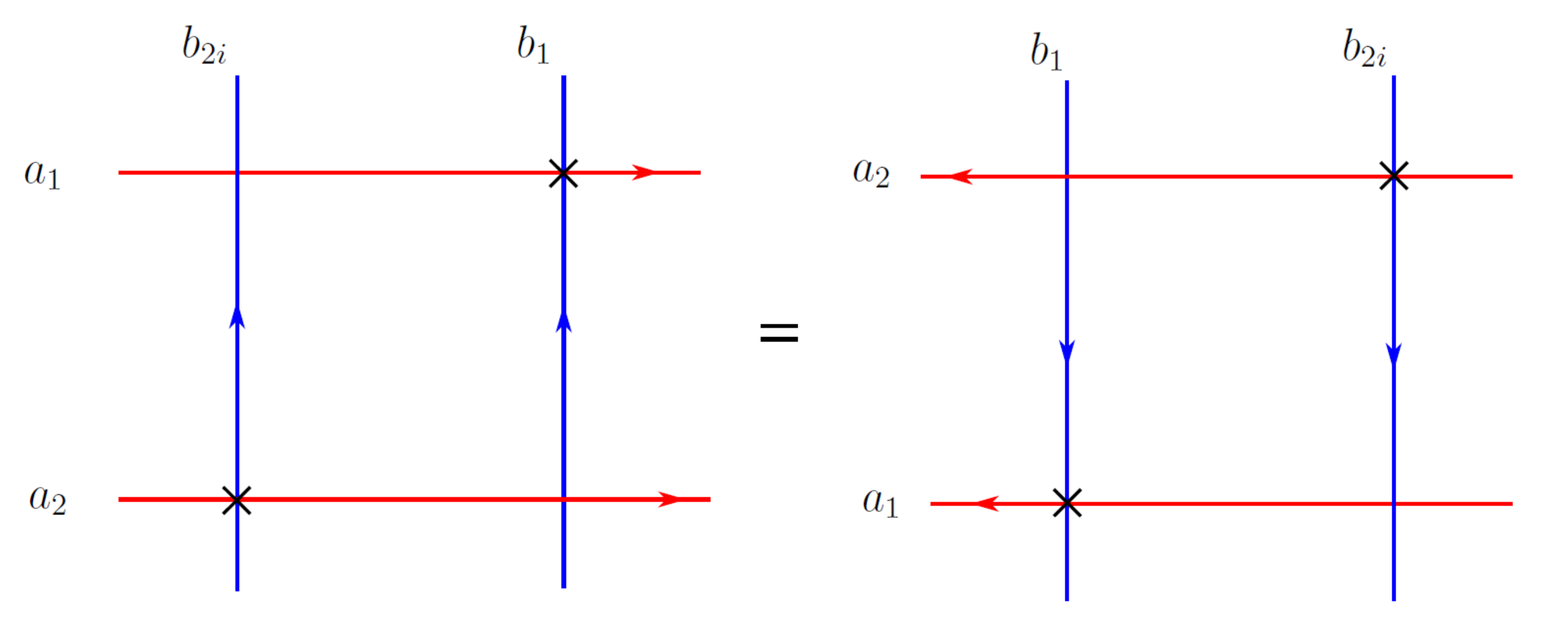}}
\caption{A pictorial representation of two equivalent configurations.}
\label{drawing3}
\end{figure}

\end{proof}



\begin{definition} \label{Rdomain} Let $\x \in S(\t H)$ and $\y \in \I_m^{I,m} \subset S(\t H)$. A pseudo-domain $p\in \sigma(\x,\y)$ is called of \emph{Type R} if the following conditions are satisfied:

\begin{itemize}
\item There exist a set of generators $\{\z_0=\x,\z_1,\cdots,\z_k\}$ such that for each $j=1,\cdots,k$, there exists a punctured hexagon $\hh_j \in H(\z_{j-1},\z_j)$ with corner at $c_j\neq w_{i_j}$ on $\t\B_1^{i_j}$.

\item There exist a $4m$-gon $\theta \in \sigma(\z_k,\y)$.

\item $p=\displaystyle\bigcup_{j=1}^k \hh_j \cup \theta$.
\end{itemize}

We define $$\displaystyle\mu(p):= \prod_{l=1}^k \mu(\hh_l).\mu(\theta).$$

\end{definition}

\begin{definition}\label{F-def}
Let $\x \in S(\t H)$ and $\y \in \I_m^{I,m} \subset S(\t H)$. We denote the set of pseudo-domains $p\in \sigma(\x,\y)$ of Type $L$ by $\sigma^L(\x,\y)$, and the set of Type $R$ pseudo-domains by $\sigma^R(\x,\y)$. We denote by $\sigma^F$ the union of all Type $L$ or $R$ pseudo-domains.

We define a map $F':C(\t H) \to C^{I,m}_Q[1]\oplus C^{I,m}_Q$ as follows.
$$F'(\x)=(\sum_{\y\in\I_m^{I,m}}\sum_{p\in \sigma^R(\x,\y)} \mu(p)\cdot \y , \sum_{\y\in\I_m^{I,m}}\sum_{p\in \sigma^L(\x,\y)} \mu(p)\cdot \y).$$ 
By identifying $C^{I,m}_Q$ and $C(\t G)$, we obtain a map $F:C(\t H) \to C(\t G)[1]\oplus C(\t G)$.

We endow $C(\t G)[1]\oplus C(\t G)$ with the differential
$\partial(a,b)=(-\partial a,\partial b)$
where $\partial$ denotes the differential within $C(\t G)$.
 
\end{definition}

\begin{remark}\label{psi-remark}
We will say a few words about the correspondence between $C^{I,m}_Q$ and $C(\t G)$. Given a generator $\x \in \I_m^{I,m}$, from the definition we know that for each $i\in \{1,\cdots,m\}$, $w_i$ is a component of $\x$. Since $\t G$ is obtained from $\t H$ by removing the $\A$- and $\B$-curves through all the $w_i$'s, if we forget the $w_i$ components of $\x$ we obtain a canonical element in $\S(\t G)$. We denote this map by $\psi:C^{I,m}_Q \to C(\t G)$.  

\end{remark}

\begin{remark}
In order to show that the stable knot Heegaard Floer homology is invariant under the stabilization, we will show that the above $F$ is a quasi-isomorphism. The proof consists of two main pieces. First in Lemma~\ref{part1} we show that $F$ is a chain map. Using a new filtration in Proposition~\ref{quasi-final} we prove that $F$ is a quasi-isomorphism.
\end{remark}

\begin{lemma}\label{part1}
$F$ is a chain map (over $\Z$) and it preserves the Alexander grading.
\end{lemma}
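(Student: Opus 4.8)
The plan is to verify the chain map equation $F \circ \t\partial^{\t H} = \partial \circ F$ by expanding both sides and showing that all terms not already matched cancel in pairs. Write $F = (F_R, F_L)$ with $F_R$ counting Type $R$ pseudo-domains and $F_L$ counting Type $L$ pseudo-domains. Because the target differential is $\partial(a,b) = (-\partial a, \partial b)$, I must separately establish
\begin{equation*}
F_L \circ \t\partial^{\t H} = \partial^{\t G} \circ F_L, \qquad F_R \circ \t\partial^{\t H} = -\partial^{\t G} \circ F_R.
\end{equation*}
For the Alexander grading statement I would note that empty rectangles in $\t H$ that appear in $\t\partial^{\t H}$ change the Alexander grading by $0$ unless they contain $X_1$ or $O_1$; the pseudo-domains of Type $L$ and $R$ are built from punctured rectangles/hexagons and $4m$-gons whose only permitted basepoint in the interior is $X_1$, and the bookkeeping of which $\A_1$-lifts carry $w_i$ components exactly compensates, just as in \cite{D} and \cite{MOST}; this part is essentially a direct grading count and I would present it briefly.

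For the Type $L$ identity, the strategy is the standard ``juxtaposition'' argument: a term in $F_L(\t\partial^{\t H}\x)$ or $\partial^{\t G}(F_L\x)$ corresponds to a region of the form (Type $L$ pseudo-domain) $*$ (empty rectangle), i.e. a sequence of punctured hexagons $\hh_1,\dots,\hh_k$ together with one extra empty rectangle $r$ somewhere in the composition. Classifying the boundary of such a ``domain with an extra rectangle'' by its possible decompositions, each such region decomposes in exactly two ways into a Type $L$ pseudo-domain followed or preceded by an empty rectangle — the two ways differ by sliding the rectangle $r$ past the adjacent $\hh_j = \a_j * s_j$ (a punctured rectangle composed with a rectangle). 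The key point is that the signs of the two decompositions are opposite: since each $\hh_j$ is a product of the signs of \emph{two} sub-regions, moving $r$ across it by the (S-3) commutativity of Remark~\ref{signremark} introduces a factor $(-1)^2 = +1$ from the two crossings, while $r$ and the single extra region it must ultimately pair with contribute one factor of $-1$; the net effect is that the two contributions cancel. Degenerate cases — where $r$ and some $\hh_j$ overlap so their union is not a disjoint juxtaposition but a larger connected region (a punctured hexagon split differently, or a region with one $w_i$ newly excluded) — must be handled by hand, exactly as the corresponding cases in \cite[Lemma 3.2]{D}.

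For the Type $R$ identity the argument is the same in spirit, but now the pseudo-domain contains the $4m$-gon $\theta$, and the subtle point is the interaction of an extra empty rectangle with $\theta$. Here Lemma~\ref{indep-lemma} is the essential tool: when an empty rectangle $r$ is slid into or out of $\theta$, we are free to compute $\mu(\theta)$ using whichever finger-move edge $a_k$ is convenient — typically the edge disjoint from $r$ — and then the computation reduces to the same (S-1)/(S-3) manipulation already carried out in the definition of $\mu(\theta, a_k)$ and in the proof of Lemma~\ref{indep-lemma}. The overall sign discrepancy between the Type $L$ and Type $R$ identities (the $-$ in front of $\partial^{\t G}$ for $F_R$) is exactly the $(-1)^{m-1}$ bookkeeping factor together with the extra $-1$ from the complementary-rectangle relation $\mu(\a)\cdot\SS(F(r_\a)) = -1$ of the (S-2) property; tracking this constant carefully through the $4m$-gon is the one place where our situation genuinely differs from \cite{D}, and it is the main obstacle. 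I expect the remaining cancellations to follow formally from the commutativity properties of the sign assignment established above, so once the $\theta$-versus-rectangle interaction and the overall sign are pinned down, the rest is the routine case analysis of \cite[Lemma 3.2]{D} and \cite{MOST}, which I would invoke rather than reproduce.
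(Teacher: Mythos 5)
Your approach is the same as the paper's: expand $\partial\circ F + F\circ\partial$, classify the composite regions by how the extra rectangle meets the pseudo-domain, and verify each cancellation over $\Z$ using the (S-1)/(S-2)/(S-3) properties, with Lemma~\ref{indep-lemma} as the essential new ingredient for the $4m$-gon interactions. You also correctly identify the parity heuristic --- a Type~$L$ pseudo-domain decomposes into an even number of $\SS$-factors (each $\hh_j$ contributes a pair), while a Type~$R$ pseudo-domain contributes an odd number --- which is exactly why $F_L$ commutes with $\partial$ while $F_R$ anti-commutes.

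That said, two points in your sign accounting are off. First, the $(-1)^{m-1}$ in the definition of $\mu(\theta)$ is \emph{not} the source of the extra minus sign in $F_R\circ\partial = -\partial\circ F_R$: that prefactor is a constant appearing in both $\mu(p)$ and $\mu(\overline p)$ and simply cancels. The relevant fact is that $\mu(\theta)$ is a product of the $2m-1$ factors $\SS(F(R^k_i))$, an \emph{odd} count, so a disjoint rectangle picks up a net $-1$ when commuted past the $4m$-gon by (S-3). Second, your prescription ``use the finger-move edge disjoint from $r$'' is backwards: in the paper's computations (Figures~\ref{I1General4m3}--\ref{I1General4m6b}) the finger move is always taken along the edge $a_1$ that makes $r$ interact with a single \emph{actual} rectangle $R^1_1$ (or $R^1_{2m-1}$) at one end of the expansion, so that (S-3) can be applied once and cleanly. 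Finally, while you identify the $4m$-gon interaction as the genuine novelty, dismissing the remaining cancellations as ``the routine case analysis of \cite{D}'' understates the work: the decomposition into types I(0), I(1), I($1'$), I(2), I(3), II(0), II(1), and the matching of I($1'$)/I(3), I($1'$)/II(1), and I(2)/II(0), each require their own sign check with Type~1 versus Type~2 rectangles and the identity of Remark~\ref{type2-sign}, and several new configurations arise that have no analogue at $m=2$. Your outline gets the structure right, but a full proof has to carry out those cases explicitly.
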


\begin{remark}\label{psi-inverse}
In the course of the following proof we will need to compare $\partial_{\t G} \circ F$ and $F\circ\partial_{\t H}$. Given an empty rectangle $r\in Rect(\x,\y)$ in the Heegaard diagram $\t G$, we can take its inverse image under the identification of Remark~\ref{psi-remark} to obtain a rectangle $r'$ in the Heegaard diagram $\t H$. To be more precise we take the inverse of $\x$ and $\y$ under $\psi$, and we take $r'$ to be the unique rectangle (not necessarily empty) connecting them. The rectangle $r'$ may not be an empty rectangle. If $r'$ is non-empty the only possibility for $r'$ is to contain all the $w_i$'s and both the $O_1$ and $X_1$ markings. We will need to consider these rectangles during the proof. To distinguish them from empty rectangles in the Heegaard diagram $\t H$, we call an empty rectangle (in $\t H$) a Type 1 rectangle, and a non-empty rectangle $r'$ (as above) is called a Type 2 rectangle.
\end{remark}

\begin{remark}\label{type2-sign}
The sign assignment for the Heegaard diagram $\t G$ is obtained from the fixed sign assignment of $\t H$ as follows. Given an empty rectangle $r\in Rect(\x,\y)$ in the Heegaard diagram $\t G$, we consider its inverse under $\psi$ (see Remark~\ref{psi-inverse}) to obtain a rectangle $r'$ in the Heegaard diagram $\t H$. This rectangle as a formal rectangle in $\t H$ has a  sign which we take to be the sign of $r$. 

Note that in \cite{Sign} (during the proof of Proposition~4.4) it has been shown that for a non-empty rectangle $r'$ (in our language a Type 2 rectangle) given by Figure~\ref{type2}  we have the following identity.
$$\SS(F(r'))=\SS(F(r_1))\cdot\mu(\a) \cdot\SS(F(r_2)).$$

More generally in \cite{Sign} it has been shown that for a region as in Figure~\ref{type2}, we have the following identities.
\begin{eqnarray*}
\SS(F(ABCD))&=\SS(B)\cdot\SS(AC)\cdot\SS(D)\\
&=\SS(C)\cdot\SS(BD)\cdot\SS(A).
\end{eqnarray*}

\end{remark}


\begin{figure}[h]
\centerline{\includegraphics[scale=.4]{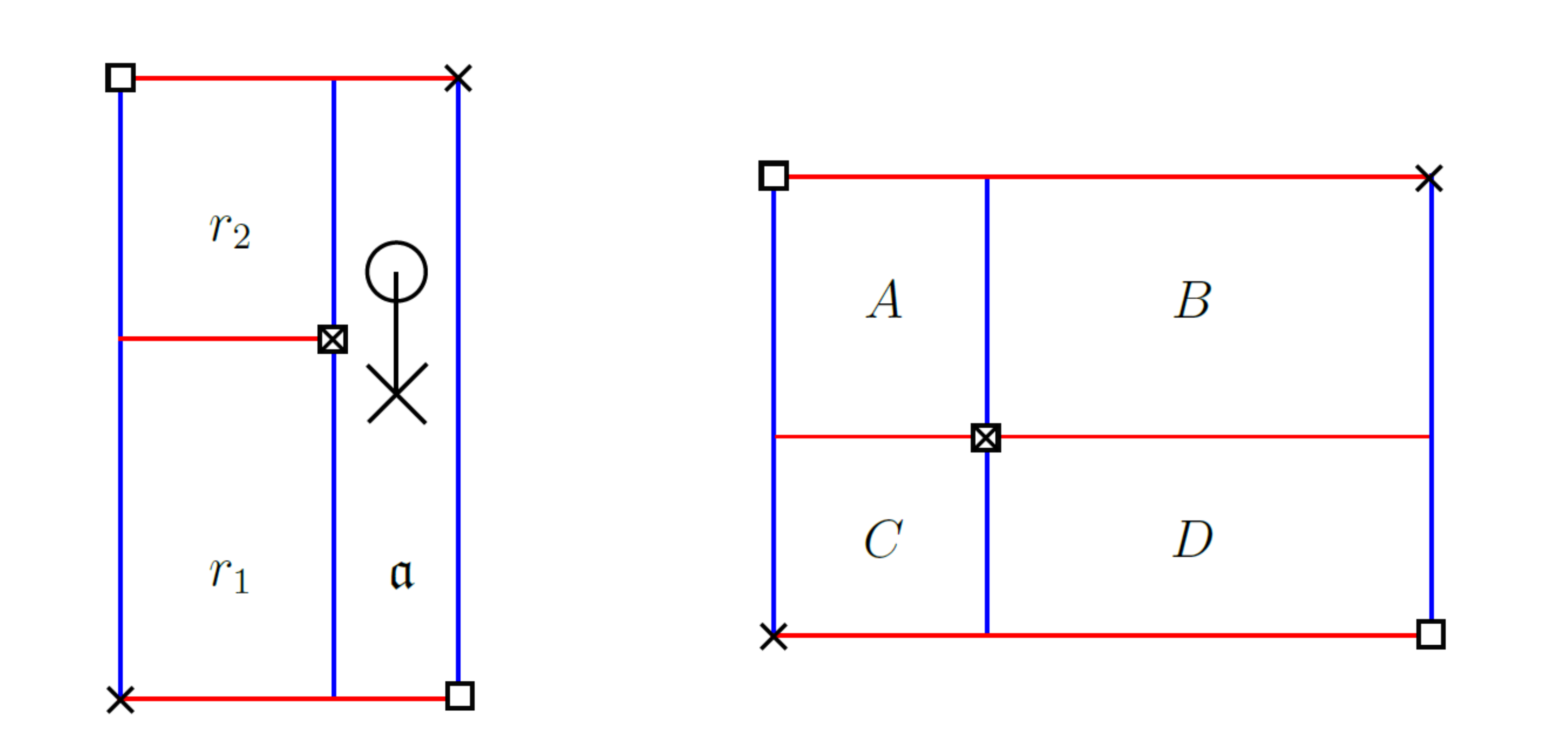}}
\caption{The picture on the left illustrates a Type 2 rectangle. The picture on the right illustrates a rectangle containing a common component of both of the initial and the terminal generators.}
\label{type2}
\end{figure}

\begin{proof}
We show that for each generator $\x$ the sum of contributions from $\partial \circ F$ and $F\circ\partial$ cancel out. From the definition of $F$ and $\partial$ we have to consider various ways a rectangle and a pseudo-domain can be composed. We consider all the cases of the composition of a rectangle and a pseudo-domain $p\in \sigma^F$, and put them into several groups according to the number of the corners they have in common and the type of the rectangle. We begin by listing all the possibilities, then in each case we show that the sum of contributions from $\partial \circ F$ and $F\circ\partial$ cancel out over $\Z_2$. This is an intermediate step in our proof. Then we show that the cancellations hold with $\Z$ coefficients as well.

As we mentioned at the beginning of this subsection, using the commutation move we can assume that $O_1$ is adjacent to the $X$ marking in its row. Depending on whether the $X$ marking is in the left or in the right of $O_1$ we have two cases. If the $X$ marking is in the left of $O_1$ then the only possible $L$-shape region is empty and all the $R$-shape regions are $4m$-gons. Also in this case we do not have any Type 2 rectangles. Therefore the proof for this case is covered in the proof of the more complicated case, where the $X$ marking is in the right of $O_1$. From now on we assume that the $X$ marking is adjacent to $O_1$ and in the right of it.

The rectangle $r$ is either of Type $1$ or $2$. We consider two cases.

\textbf{Case I}: Let $r$ be a Type 1 rectangle.

I(0) There are no common corners between $r$ and $p$, i.e. they are disjoint. The composition can be counted in either way as a term in $\partial \circ F$ or $F\circ\partial$.

I(1) There is one common corner between $r$ and $p$ and the rectangle $r$ does not contain any of the $w_i$'s for $i=1,\dots ,m$, except possibly at the common corner of $r$ and $p$.

I($1'$) Again there is one common corner between $r$ and $p$, but the rectangle $r$ contains exactly one of the $w_i$'s where $0 \leq i \leq m$. There are two possibilities: First, $w_i$ is in the     interior of the right edge of $r$. Second, the lower-right corner of $r$ is $w_i$.
In both cases $r$ has an edge that is not contained in $\t G$ so the composition can only be counted as a term in $F\circ\partial$. To be more precise, since $r$ has an edge that goes through $w_i$ it is not equal to the inverse image of any rectangle from $\t G$ under $\psi$ (see Remark~\ref{psi-inverse}). 

I(2) There are two common corners between $r$ and $p$, other than possibly $w_i$ for some $0 \leq i \leq m$.

I(3) There are three common corners between $r$ and $p$, other than possibly $w_i$ for some $0 \leq i \leq m$. In this case the image of the  $r$ and $p$ in the grid diagram $H$ have at least three corners in common, hence the composite region must contain a whole column or a row. On the other hand, a composite region in $\widetilde{H}$ can not contain any markings other than $X_1$ and $O_1$. Hence the composite region of $r$ and $p$ (in $\widetilde{H}$) contains one of the inverse images of the column in $H$ that contains $X_1$ and $O_1$.

\textbf{Case II : }Let $r$ be a Type 2 rectangle and $p\in \sigma^F$. A Type 2 rectangle contains the $X_1$ and the $O_1$ markings, hence it can only be considered as an empty rectangle in $\widetilde{G}$ (not in $\t H$) and appears as a term in $\partial \circ F$. For $p\in \sigma^F$ the only possible composition is of the form $p*r$. There are two cases.

II(0) The rectangle $r$ of Type 2, does not have any corner in common with $p$.

II(1) $r$ and $p$ shares a corner.
It is not hard to see that in this case, the composite region of $r$ and $p$ (in $\widetilde{H}$) contains one of the inverse images of the column in $H$ that contains $X_1$ and $O_1$.

Now we show that $F$ is a chain map over $\Z_2$. In order to do so we explain which configurations in $\partial \circ F + F \circ \partial$ pair together and cancel out (over $\Z_2$). The proof is similar to Lemma 3.5 from \cite{MOST} and Lemma 3.2 in \cite{D}.

\textbf{I(0)}: This case is obvious, since by changing the order of using the rectangle and the pseudo-domain, we get one contribution form $\partial \circ F$ and one contribution from $F\circ\partial$.

\textbf{I(1)}: A composite region that is formed by a rectangle $r$ and a pseudo-domain $p\in \sigma^F$ which belongs to the case I(1) has two different decompositions. The composition has a unique concave corner that belongs to the boundary of both $r$ and $p$. Cutting horizontally or vertically through the concave corner gives two decompositions. Theses two decompositions can be counted as terms of $\partial \circ F$ or $F\circ\partial$. It is possible that both terms belong to the same composition. These two decompositions cancel each other out. See Figures~\ref{I1General4m3}-\ref{I1General4m6b}.

\textbf{I($\mathbf{1'}$), I(3) and II(1)}: 
It is a simple geometric exercise to show that the contribution from any composite region of type I($1'$) can be paired (hence canceled) by the contribution from a regions of either type I(3) or II(1), and vice versa. This has been illustrated in Figures~\ref{I3LGeneral}-\ref{II1General}, for more details see the proof of Lemma 3.5 of \cite{MOST}.

\textbf{I(2) and II(0)}:
Similar to the previous case, one can show that the contribution from terms of I(2) and II(0) pair together. This occurs mainly in three ways that we illustrate in Figure~\ref{II0General}. 

\begin{remark}\label{Additional-comp}
For any given case in the corresponding figure, we have drawn a rectangle and a pseudo-domain that intersect each other (in a certain way associated to that case) in the first grid. Note that this piece of the pseudo-domain might pass through a cut and go to other grids, and also the pieces of the pseudo-domain in other grids may vary. These pictures should be considered as a schematic image for the aforementioned case. Also note that the possible changes in the pieces of the pseudo-domain do not change the proof, and in this sense we have considered \emph{all the} cases.
\end{remark}

Now we give the argument over $\Z$. In fact we check that the cancellations that we explained over $\Z_2$ still hold with signs. We use the following notations.  

\begin{itemize}
\item As we mentioned in Remark~\ref{signremark}, in order to make it easier to follow the arguments, we use the name of the underlying region to indicate a given pseudo-domain. Note that we keep track of the initial and the terminal generators for each pseudo-domain although it does not appear in this notation.  
\item For a pseudo-domain $p$ as in Figure~\ref{I1General4m3} we write $AB\sqsubset p$. Note that $AB$ is not a domain, since there is a branch cut along its boundary. But with an appropriate finger move we can obtain a domain, which by abuse of notation we denote by $AB$ see Figure~\ref{I1General4m3}. In the first few cases we explicitly draw the finger move to make the arguments easier to follow, and later on in order to make the pictures less complicated we do not draw the finger moves.
\end{itemize}

\begin{remark}
In what follows we will check that given a pseudo-domain $p$ and a rectangle $r$, the above cancellations hold over $\Z$. Using the notation of Definition~\ref{F-def} we have to show that $F^R\circ\partial=-\partial\circ F^R$ and $F^L\circ\partial=\partial\circ F^L$. Depending on the order of the compositions in different cases, these identities translate into the following equalities:
\begin{itemize}
\item If $p,p'\in\sigma^L$ and $p*r=r'*p'$, then $\mu(p)\cdot \SS(r)=\SS(r')\cdot \mu(p')$
\item If $p,p'\in\sigma^R$ and $p*r=r'*p'$, then $\mu(p)\cdot \SS(r)=-\SS(r')\cdot \mu(p')$
\item If $p,p'\in\sigma^F$ and $p*r=p'*r'$ or $r*p=r'*p'$, then $\mu(p)\cdot \SS(r)=-\mu(p')\cdot \SS(r').$
\end{itemize}

We will express $\mu(p)$, the sign of $p$, as the product of signs of certain simpler regions ($R_i^j$'s and $\hh_k$'s). Note that $r$ has intersection with at most one of these regions which we call the \emph{main piece}, and denote it by $main(p)$. The main piece is the only region that we have to analyze carefully, since all the other regions are disjoint from the rectangle $r$ and using the S-3 property they can be swapped with $r$. In the course of the proof we only use the alphabetical notation for the underlying region of the main piece, and the other regions of $p$ are understood from the initial generator.

\end{remark}

\textbf{I(0) over $\Z$:}

 This case follows from the S-3 property of the sign assignment.

\textbf{I(1) over $\Z$:}

In this case all the non-trivial configurations involve $4m$-gons. Since in all other configurations the cancellation over $\Z$ follows from the S-3 property of the sign assignment. The difficulty arises in the case of a $4m$-gon as we used the finger move to define the sign of a $4m$-gon. In this case the cancellation between two different ways that we can cut the composite region does not readily follow from the S-3 property. To summarize, the case of a Type $L$ pseudo-domain follows from the S-3 property, and the case of a general Type $R$ pseudo-domain follows from the case of a $4m$-gon and the S-3 property. So we only discuss the case of a $4m$-gon.

If a rectangle $r$ has a corner in common with a $4m-$gon, we get five possible configurations. See Figures \ref{I1General4m3}-\ref{I1General4m6b}. Using Lemma \ref{indep-lemma}, in each case we use an appropriate finger move to compute the sign of the $4m-$gon.

In the configuration of Figure \ref{I1General4m3}, we use the finger move along $a_1$. In this figure $r=B$ , $main(p)=AC$ and $r'=C$.

\begin{figure}[h]
\centerline{\includegraphics[scale=0.8]{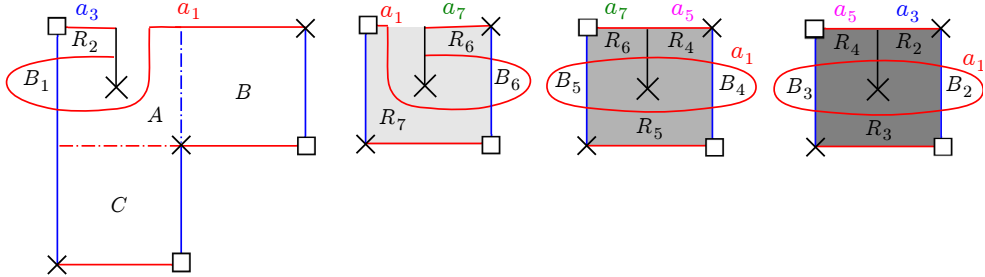}}
\caption {In this picture we illustrate a composite region of type $I(1)$, that has another decomposition of the same type, and the contributions from these two configurations cancel each other out. We have $r*p=r'*\overline{p}$ where $r=B$ , $main(p)=AC$, $r'=C$ and $main(\overline{p})=AB$.}
\label{I1General4m3}
\end{figure}

\begin{eqnarray*}
\SS(r)\cdot \mu(p)
&=& (-1)^{m-1} \SS(r)\cdot \SS(F(R_{1}^{1}))\cdot \SS(F({R'}_{2}^{1}))\cdots \SS(F(R_{2m-1}^{1})) \\
&=& (-1)^{m-1} \SS(B)\cdot \SS(AC)\cdot \SS(F({R'}_{2}^{1}))\cdots \SS(F(R_{2m-1}^{1})) \\
&=& (-1)^{m} \SS(C)\cdot \SS(AB)\cdot \SS(F({R'}_{2}^{1}))\cdots \SS(F(R_{2m-1}^{1})) \\
&=&  (-1)^{m} \SS(r')\cdot \SS(F({\overline{ R}_{1}}^{1}))\cdot \SS(F({R'}_{2}^{1}))\cdots \SS(F(R_{2m-1}^{1}))\\
&=& - \SS(r')\cdot \mu(\overline{p}).
\end{eqnarray*}

\begin{figure}[h]
\centerline{\includegraphics[scale=0.8]{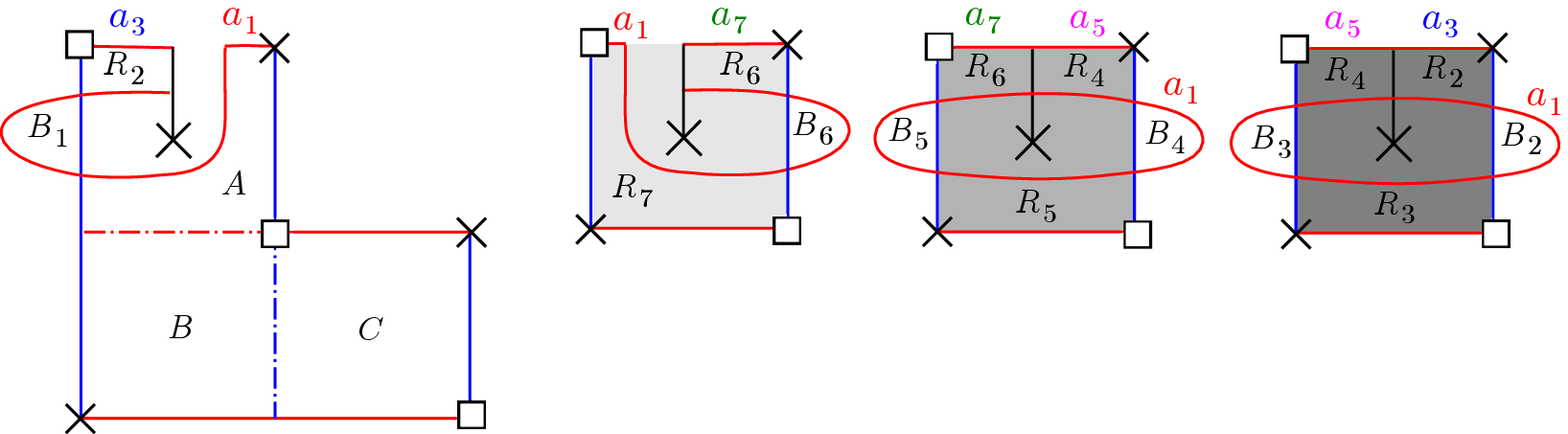}}
\caption {A cancellation of a pair of type $I(1)$ configurations.}
\label{I1General4m5a}
\end{figure}

In the configuration of Figure \ref{I1General4m5a}, we use the finger move along $a_1$. In this configuration we have $p*r=r'*\overline{p}$ where $main(p)=AB$, $r=C$, $r'=BC$ and $main(\overline{p})=A$.

\begin{eqnarray*}
 \mu(p)\cdot \SS(r)
&=& (-1)^{m-1}  \SS(F(R_{1}^{1}))\cdot \SS(F({R'}_{2}^{1}))\cdots \SS(F(R_{2m-1}^{1}))\cdot\SS(r) \\
&=& (-1)^{m-1}  \SS(F(R_{1}^{1}))\cdot\SS(r) \cdot \SS(F({R'}_{2}^{1}))\cdots \SS(F(R_{2m-1}^{1})) \\
&=& (-1)^{m-1}  \SS(AB)\cdot\SS(C) \cdot \SS(F({R'}_{2}^{1}))\cdots \SS(F(R_{2m-1}^{1})) \\
&=& (-1)^{m}  \SS(BC)\cdot\SS(A) \cdot \SS(F({R'}_{2}^{1}))\cdots \SS(F(R_{2m-1}^{1})) \\
&=&  (-1)^{m} \SS(r')\cdot \SS(F({\overline{ R}_{1}}^{1}))\cdot \SS(F({R'}_{2}^{1}))\cdots \SS(F(R_{2m-1}^{1}))\\
&=& - \SS(r')\cdot \mu(\overline{p}).
\end{eqnarray*}

Similarly, in the configuration of Figure \ref{I1General4m5b}, we use the finger move along $a_1$. In this configuration we have $p*r=r'*\overline{p}$ where $main(p)=A$, $r=BC$, $r'=B$ and $main(\overline{p})=AC$.
\begin{figure}[h]
\centerline{\includegraphics[scale=0.8]{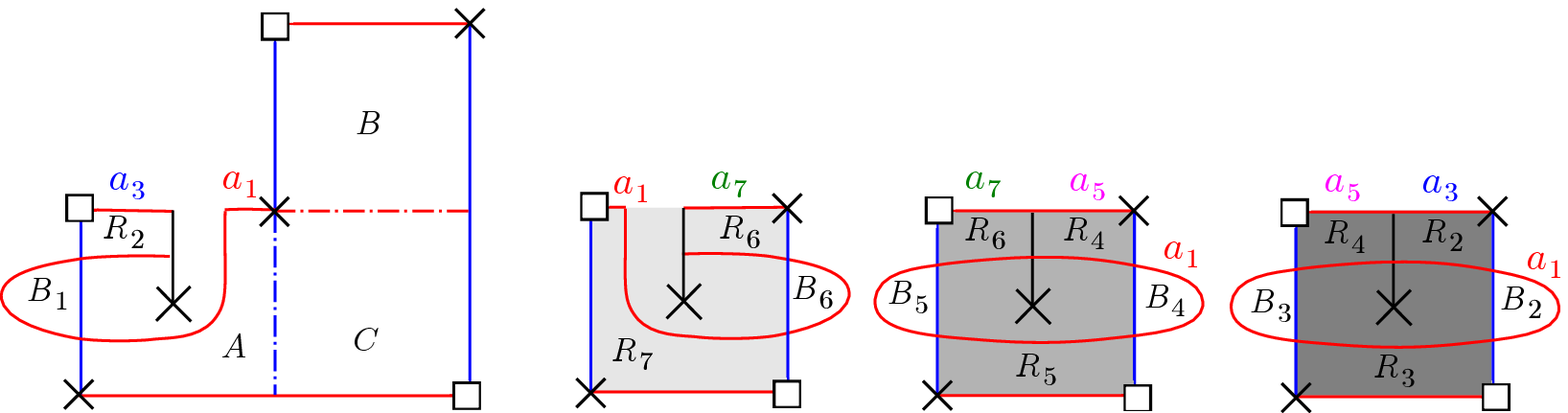}}
\caption {A cancellation of a pair of type $I(1)$ configurations.}
\label{I1General4m5b}
\end{figure}

\begin{eqnarray*}
 \mu(p)\cdot \SS(r)
&=& (-1)^{m-1}  \SS(F(R_{1}^{1}))\cdot \SS(F({R'}_{2}^{1}))\cdots \SS(F(R_{2m-1}^{1}))\cdot\SS(r) \\
&=& (-1)^{m-1}  \SS(F(R_{1}^{1}))\cdot\SS(r) \cdot \SS(F({R'}_{2}^{1}))\cdots \SS(F(R_{2m-1}^{1})) \\
&=& (-1)^{m-1}  \SS(A)\cdot\SS(BC) \cdot \SS(F({R'}_{2}^{1}))\cdots \SS(F(R_{2m-1}^{1})) \\
&=& (-1)^{m}  \SS(B)\cdot\SS(AC) \cdot \SS(F({R'}_{2}^{1}))\cdots \SS(F(R_{2m-1}^{1})) \\
&=&  (-1)^{m} \SS(r')\cdot \SS(F({\overline{ R}_{1}}^{1}))\cdot \SS(F({R'}_{2}^{1}))\cdots \SS(F(R_{2m-1}^{1}))\\
&=& - \SS(r')\cdot \mu(\overline{p}).
\end{eqnarray*}

Using the finger move along the $\A$-curve $a_1$ for the configuration of Figure \ref{I1General4m6a}, in which $main(p)=A$, $r=BC$ ,$r'=B$ and $main(\overline{p})=A$, we obtain the following equalities.

\begin{figure}[h]
\centerline{\includegraphics[scale=0.8]{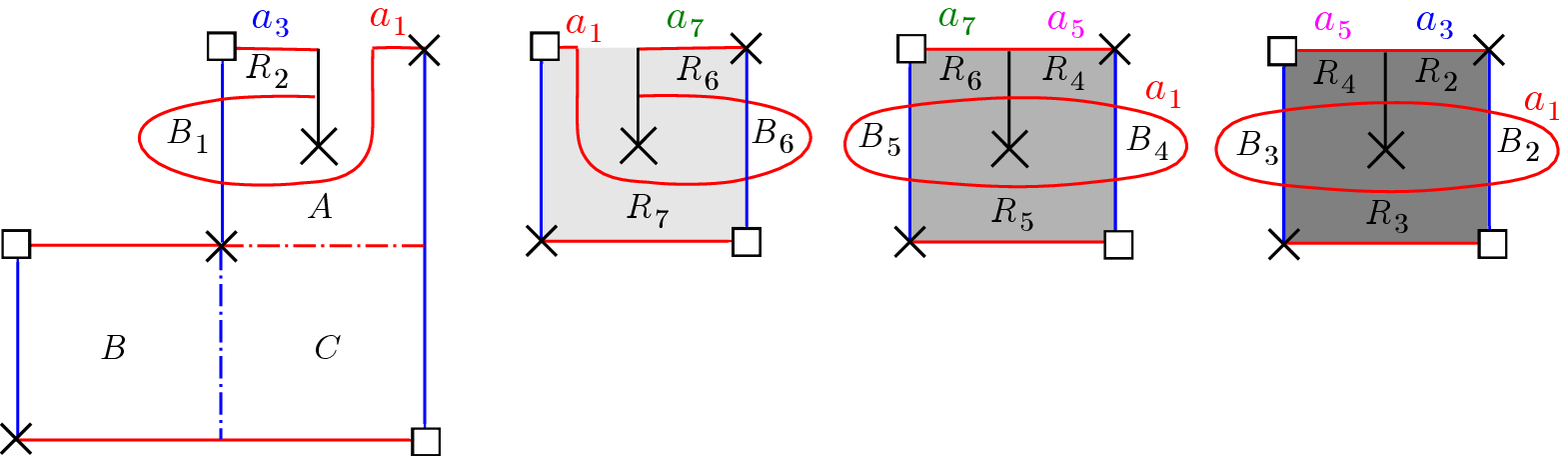}}
\caption {A cancellation of a pair of type $I(1)$ configurations.}
\label{I1General4m6a}
\end{figure}

\begin{eqnarray*}
 \mu(p)\cdot \SS(r)
&=& (-1)^{m-1}  \SS(F(R_{1}^{1}))\cdot \SS(F({R'}_{2}^{1}))\cdots  \SS(F(R_{2m-1}^{1}))\cdot\SS(r) \\
&=& (-1)^{m-1}  \SS(F(R_{1}^{1}))\cdot\SS(r) \cdot \SS(F({R'}_{2}^{1}))\cdots  \SS(F(R_{2m-1}^{1})) \\
&=& (-1)^{m-1}  \SS(A)\cdot\SS(BC) \cdot \SS(F({R'}_{2}^{1}))\cdots  \SS(F(R_{2m-1}^{1})) \\
&=& (-1)^{m}  \SS(B)\cdot\SS(AC) \cdot \SS(F({R'}_{2}^{1}))\cdots  \SS(F(R_{2m-1}^{1})) \\
&=&  (-1)^{m} \SS(r')\cdot \SS(F({\overline{ R}_{1}}^{1}))\cdot \SS(F({R'}_{2}^{1}))\cdots \SS(F(R_{2m-1}^{1}))\\
&=& - \SS(r')\cdot \mu(\overline{p}).
\end{eqnarray*}

Similarly, using the finger move along the $\A$-curve $a_1$ for the configuration of Figure \ref{I1General4m6b}, in which $main(p)=AB$, $r=C$, $r'=BC$ and $main(\overline{p})=A$, we obtain:

\begin{figure}[h]
\centerline{\includegraphics[scale=0.8]{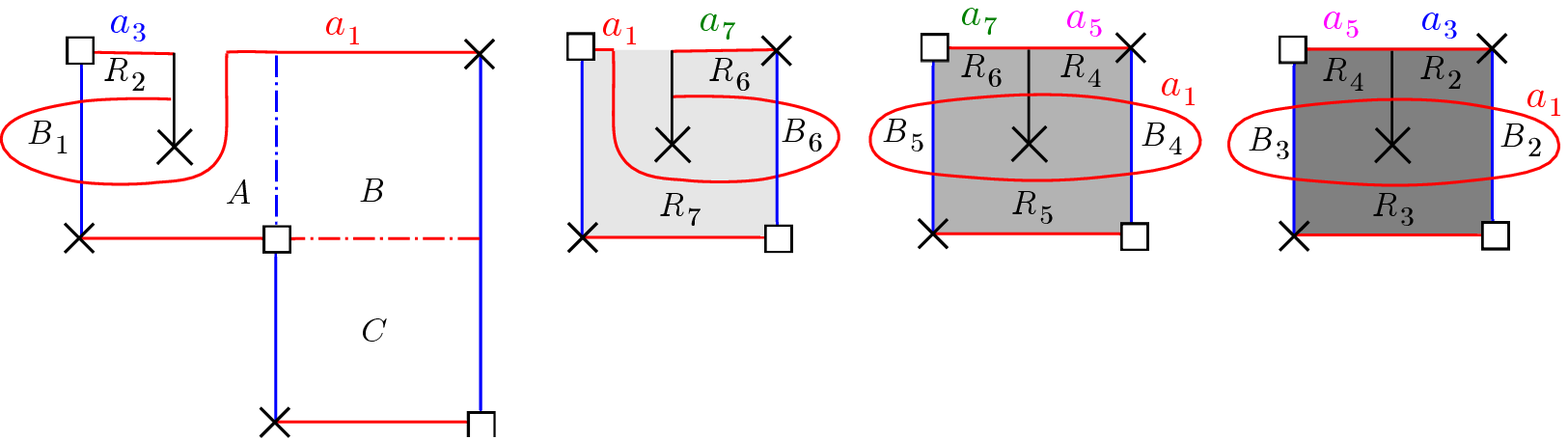}}
\caption {A cancellation of a pair of type $I(1)$ configurations.}
\label{I1General4m6b}
\end{figure}

\begin{eqnarray*}
 \mu(p)\cdot \SS(r)
&=& (-1)^{m-1}  \SS(F(R_{1}^{1}))\cdot \SS(F({R'}_{2}^{1}))\cdots  \SS(F(R_{2m-1}^{1}))\cdot\SS(r) \\
&=& (-1)^{m-1}  \SS(F(R_{1}^{1}))\cdot\SS(r) \cdot \SS(F({R'}_{2}^{1}))\cdots  \SS(F(R_{2m-1}^{1})) \\
&=& (-1)^{m-1}  \SS(AB)\cdot\SS(C) \cdot \SS(F({R'}_{2}^{1}))\cdots  \SS(F(R_{2m-1}^{1})) \\
&=& (-1)^{m}  \SS(BC)\cdot\SS(A) \cdot \SS(F({R'}_{2}^{1}))\cdots  \SS(F(R_{2m-1}^{1})) \\
&=&  (-1)^{m} \SS(r')\cdot \SS(F({\overline{ R}_{1}}^{1}))\cdot \SS(F({R'}_{2}^{1}))\cdots \SS(F(R_{2m-1}^{1}))\\
&=& - \SS(r')\cdot \mu(\overline{p}).
\end{eqnarray*}

\begin{remark}
For a pseudo-domain $p$ as in Figure \ref{I1General4m6b}, we have illustrated an appropriate finger move, that enabled us to compute the sign of $p$ as the product of the signs of various regions. 
In the first few cases we have explicitly drawn the finger move to make the arguments easier to follow, but from this point in order to make the pictures less complicated, we do not draw the modified curves. As a result, we may write the sign of a pseudo-domain as a product of signs of regions with a cut along their boundary. This should be understood as the region that we obtain after the finger move (which has no cut along the boundary). For example look at the region $AB$ in Figure~\ref{I1General4m6b}.

\end{remark}

\textbf{I($\mathbf{1'}$) and I(3) over $\Z$:}

\begin{figure}[h]
\centerline{\includegraphics[scale=0.8]{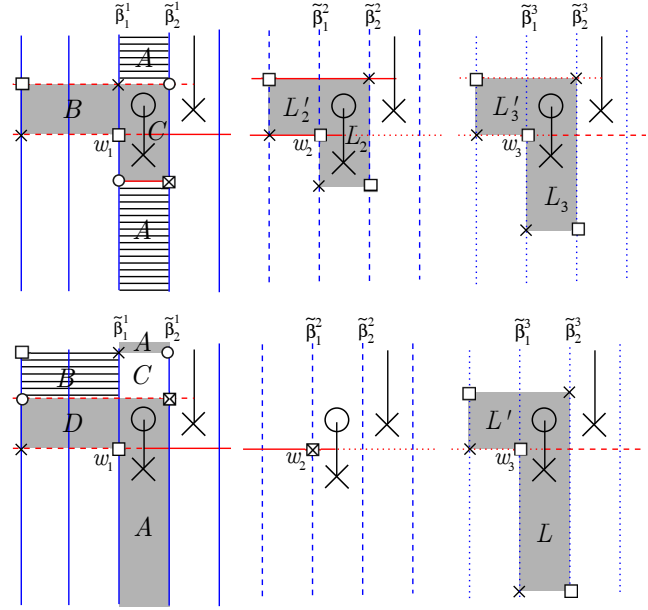}}
\caption {Cancellations of a type I($1'$) configuration with a type I(3) configuration with a Type $L$ pseudo-domain.}
\label{I3LGeneral}
\end{figure}

The contribution from a composite region of type I($1'$) can be paired with a contribution of a composite region of type I(3) or II(1). For a type I(3) composite region we have to consider two cases, depending on whether the pseudo-domain is of Type $L$ or Type $R$. We draw the possible configuration with a Type $L$ pseudo-domain in Figure~\ref{I3LGeneral}, and Type $R$ cases are drawn in Figure~\ref{I3RGeneral}.

In the first picture in Figure \ref{I3LGeneral} we have $r*p=r'*p'$ where $r=A$, main piece of $p$ is $BC$, the main piece of $p'$ is empty and $r'=B$ is a Type 1 rectangle. Note that since an edge of $r'$ is on a lift of $\B_1$ it can only be counted as an empty rectangle in $\t H$. In the following computation according to the S-2 property, the term $\SS(A)\cdot \SS(C)$ is equal to $-1$.


\begin{eqnarray*}
\SS(r) \cdot \mu(p)
&=&  \SS(A)\cdot \SS(\hh_1) \cdot \SS(\hh_2) \cdot\cdots \SS(\hh_k) \\
&=&  \SS(A)\cdot \{\SS(C)\cdot \SS(B) \}  \cdot \SS(\hh_2) \cdot\cdots \SS(\hh_k) \\
&=&  \{\SS(A)\cdot \SS(C)\} \cdot \SS(B)  \cdot \SS(\hh_2) \cdot\cdots \SS(\hh_k) \\
&=& -\SS(B) \cdot \SS(\hh_2) \cdot\cdots \SS(\hh_k) \\
&=&-\SS(r') \cdot \mu(p').
\end{eqnarray*}

In the second picture of Figure~\ref{I3LGeneral}, $p*r=r'*p'$ where $main(p)=AD$, $r=BC$, $r'=BD$ is a Type 1 rectangle and the main piece of $p'$ is empty. Note that as in the last case, an edge of $r'$ is an arc on a lift of $\B_1$ so it can only be counted as an empty rectangle in $\t H$. 

\begin{eqnarray*}
\mu(p) \cdot \SS(r)
&=&\SS(\hh_1) \cdot\cdots \SS(\hh_k)\cdot\SS(BC)\\
&=&\SS(A)\cdot \SS(D) \cdot \SS(\hh_2) \cdot\cdots \SS(\hh_k) \cdot \SS(BC)\\
&=&\SS(A)\cdot \SS(D) \cdot \SS(BC) \cdot\SS(\hh_2) \cdot\cdots \SS(\hh_k)\\
&=&-\SS(A)\cdot \SS(C) \cdot \SS(BD) \cdot \SS(\hh_2) \cdot\cdots \SS(\hh_k)\\
&=& \SS(BD) \cdot \SS(\hh_2) \cdot\cdots \SS(\hh_k)\\
&=&\SS(r') \cdot \mu(p').
\end{eqnarray*}

In the second equality we swap the terms $\SS(\hh_2) \cdot\cdots \SS(\hh_k)$ with $\SS(BC)$. The sign does not change since the sign of each $\hh_i$ is the product of the signs of two formal rectangles, and the S-3 property has been used an even number of times. The term $\SS(A)\cdot \SS(C)$ is equal to $-1$ according to the S-2 property.


\begin{figure}[h]
\centerline{\includegraphics[scale=0.8]{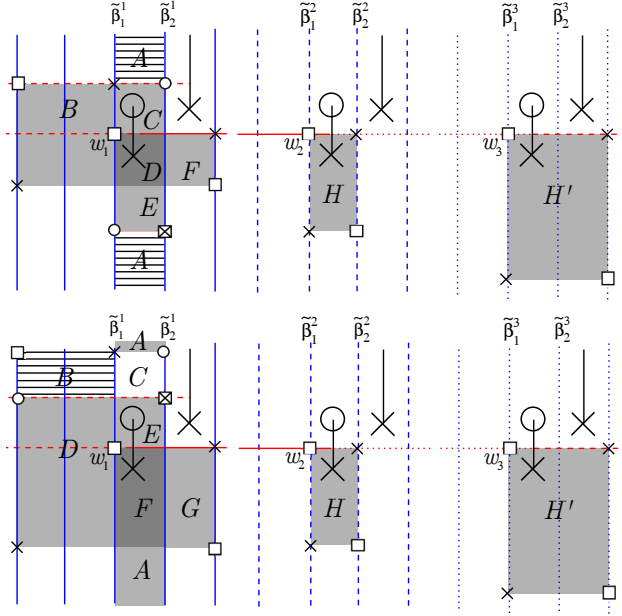}}
\caption {Cancellations of a type I($1'$) configuration with a type I(3) configuration with a Type $R$ pseudo-domain.}
\label{I3RGeneral}
\end{figure}

Now we consider the cases of I(3) configurations with a Type $R$ pseudo-domain. In the first picture in Figure~\ref{I3RGeneral}, $r*p=r'*p'$ where $r=A$, $main(p)=BCDEF$, $r'=B$ and the main piece of $p'$ is $DF$. In the second equality we use the S-2 property to replace the term $\SS(A)\cdot \SS(CDE)$ with $-1$. 

\begin{eqnarray*}
\SS(r) \cdot \mu(p)
&=& \{ \SS(A)\cdot \SS(CDE) \} \cdot \SS(B) \cdot \mu(p')\\
&=& - \SS(B) \cdot \mu(p')\\
&=&-\SS(r') \cdot \mu(p').
\end{eqnarray*}

In the second picture in Figure~\ref{I3RGeneral}, $p*r=r'*p'$ where
$main(p)=ADEFG$, $r=BC$, $r'=BD$ and $main(p')=FG$.

\begin{eqnarray*}
\mu(p) \cdot \SS(r)
&=&\SS(AEF)\cdot \SS(D) \cdot \mu(p'') \cdot \SS(BC)\\
&=&-\SS(AEF)\cdot \{\SS(D)\cdot \SS(BC)\} \cdot \mu(p') \\
&=&\SS(AEF)\cdot \SS(C)\cdot \SS(BD) \cdot \mu(p') \\
&=&-\SS(r') \cdot \mu(p')
\end{eqnarray*}

Note that $p'$ and $p''$ are pseudo-doamins of Type $R$ with the same underlying main components, but their initial generators are not the same. In the second equality we switch the order of a rectangle with underlying domain $BC$ with a disjoint Type $R$ pseudo-domain $p''$ with $main(p'')=FG$, so we get a minus sign. In fact the sign of each Type $R$ pseudo-domain is defined as the product of the signs of an odd number of formal rectangles.
In the forth equality we use the S-2 property to write $\SS(AEF) \cdot \SS(C)=-1$.


\textbf{I($\mathbf{1'}$) and II(1) over $\Z$:} 

Here we discuss the pairing of the contributions from the configurations of type I($1'$) and II(1). In the first picture of Figure~\ref{II1General}, $p*r=r'*\overline{p}$ where $main(p)=AE$, $r=BCDE$ is a Type 2 rectangle, $r'=BD$ and $main(\overline{p})=E$. Using the S-2 property, in the fourth line we substitute $\SS(A)\cdot\SS(CE)$ with $-1$.

\begin{eqnarray*}
\mu(p) \cdot \SS(r) 
&=& (-1)^{m-1}\SS(F(R_{1}^{1}))\cdots \SS(F({R'}_{2m-2}^{1}))\cdot \SS(F(R_{2m-1}^{1})) \cdot \SS(r) \\
&=& (-1)^{m-1}\SS(F(R_{1}^{1}))\cdots \SS(F({R'}_{2m-2}^{1}))\cdot \SS(AE) \cdot \left[   \SS(D)\cdot\SS(CE)\cdot \SS(B) \right] \\
&=& (-1)^{m}\SS(F(R_{1}^{1}))\cdots \SS(F({R'}_{2m-2}^{1}))\cdot \SS(DE) \cdot  \SS(A)\cdot\SS(CE)\cdot \SS(B) \\
&=& (-1)^{m-1}\SS(F(R_{1}^{1}))\cdots \SS(F({R'}_{2m-2}^{1}))\cdot \SS(DE) \cdot \SS(B) \\
&=& (-1)^{m}\SS(F(R_{1}^{1}))\cdots \SS(F({R'}_{2m-2}^{1}))\cdot \SS(BD) \cdot \SS(E) \\
&=& (-1)^{m}\SS(BD) \cdot \SS(F(R_{1}^{1}))\cdots \SS(F({R'}_{2m-2}^{1}))\cdot  \SS(E) \\
&=& - \SS(r') \cdot \mu(\overline{p}).
\end{eqnarray*}

\begin{figure}[h]
\centerline{\includegraphics[scale=0.8]{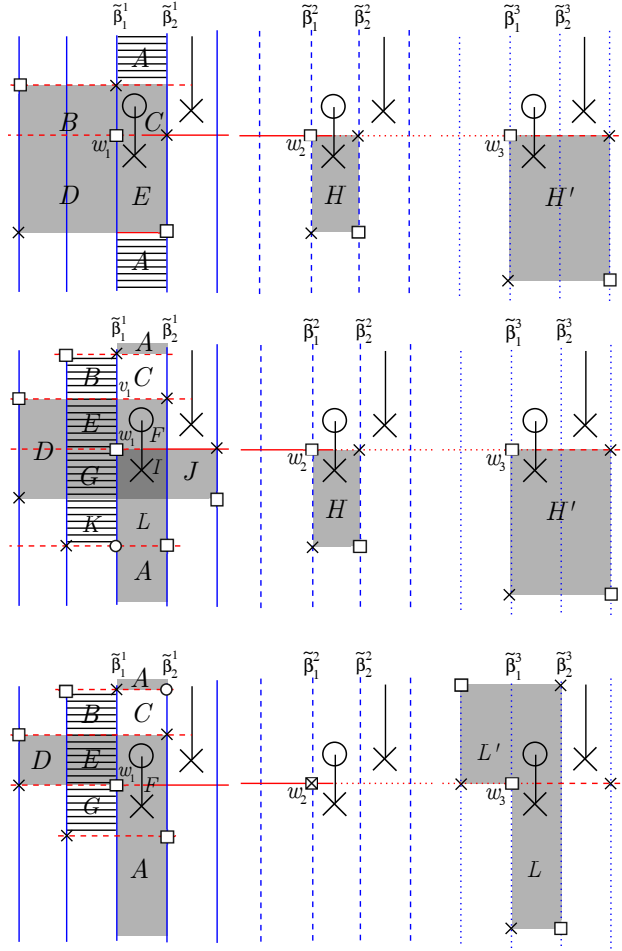}}
\caption {Cancellations of a type I($1'$) configuration with a type II(1) configuration.}
\label{II1General}
\end{figure}


In the second picture of Figure~\ref{II1General}, $p*r=r'*\overline{p}$ where $main(p)=ADEFGIJL$, $r=BCEFGIKL$ is a Type 2 rectangle, $r'=BEGK$ and $main(\overline{p})=DEFGIJL$. We denote by $p'$ the Type $R$ pseudo-domain with the main part $IJ$ and the remaining parts equal to the other parts of $p$ (in other grids).

In the second equality, note that $p'$ is disjoint from the formal rectangle associated with $r$. Using S-3 property an odd number of times leads to a minus sign and switches the order of them. Once again, in the third equality, the formal rectangle associated with $DEG$ is disjoint from the formal rectangle associated with $BCEFGIKL$, hence using the S-3 property switches their order and gives a minus sign. In the forth equality the Type 2 rectangle with underlying pseudo-domain $BCEFGIKL$ has its interior component at $v_1$. Hence we use the expression in Remark~\ref{type2-sign} to compute the sign of the associated rectangle. We use the S-2 property to substitute the term $\SS(AFIL) \cdot \SS(C)$ with $-1$.

\begin{eqnarray*}
\mu(p) \cdot \SS(r)
&=&\SS(AFIL)\cdot \SS(DEG) \cdot \mu(p')\cdot \SS(BCEFGIKL)\\
&=&-\SS(AFIL)\cdot \left[\SS(DEG) \cdot \SS(BCEFGIKL)\right] \cdot \mu(p')\\
&=&\SS(AFIL) \cdot \SS(BCEFGIKL) \cdot \SS(DEG) \cdot \mu(p')\\
&=&\left[ \SS(AFIL) \cdot \SS(C) \right] \cdot \SS(BEGK) \cdot \SS(FIL) \cdot \SS(DEG) \cdot \mu(p')\\
&=&-\SS(r') \cdot \mu(p')
\end{eqnarray*}


In the third picture in Figure~\ref{II1General}, $p*r=r'*\overline{p}$ where $main(p)=ADEF$, $r=BCEFG$ is a Type 2 rectangle, $r'=BEG$ and $main(\overline{p})=DEF$. In the second equality we moved the term $\SS(\hh_2) \cdot\cdots \SS(\hh_k)$ to the end, and this does not change the sign as we used property S-3 an even number of times.

\begin{eqnarray*}
\mu(p) \cdot \SS(r)
&=&\SS(AF)\cdot \SS(DE) \cdot \SS(\hh_2) \cdot\cdots \SS(\hh_k) \cdot \SS(G) \cdot \SS(CF) \cdot \SS(BE)\\
&=&\SS(AF)\cdot \left[ \SS(DE) \cdot \SS(G) \right] \cdot \SS(CF) \cdot \SS(BE) \cdot \SS(\hh_2) \cdot\cdots \SS(\hh_k)\\
&=&-\SS(AF)\cdot \SS(EG) \cdot \left[ \SS(D) \cdot \SS(CF) \right] \cdot \SS(BE) \cdot \SS(\hh_2) \cdot\cdots \SS(\hh_k)\\
&=&\SS(AF)\cdot \SS(EG) \cdot \SS(CF) \cdot \left[ \SS(D) \cdot \SS(BE) \right] \cdot\SS(\hh_2) \cdot\cdots \SS(\hh_k)\\
&=&-\SS(AF)\cdot \SS(EG) \cdot \left[ \SS(CF) \cdot \SS(B) \right] \cdot \SS(DE) \cdot\SS(\hh_2) \cdot\cdots \SS(\hh_k)\\
&=&\SS(AF)\cdot \left[ \SS(EG) \cdot \SS(BC)\right] \cdot \left[ \SS(F) \cdot \SS(DE) \cdot \SS(\hh_2) \cdot\cdots \SS(\hh_k) \right] \\
&=&- \left[ \SS(AF)\cdot \SS(C) \right] \cdot \SS(BEG) \cdot \mu(\overline{p})\\
&=&\SS(r') \cdot \mu(\overline{p})
\end{eqnarray*}


\textbf{I(2) and II(0) over $\Z$}:
In the first picture in Figure \ref{II0General}, $p*r=p'*r'$ where $p$ is a Type $R$ pseudo-domain whose main piece is $ABCDEF$, $r=G$ is a Type 1 rectangle, $main(p')=DEFG$ and $r'=ABCD$ is a Type 2 rectangle. We denote by $p_0$ (resp $p_1$) the Type $R$ pseudo-domain whose main part is $DE$ (resp $D$) and its remaining parts are equal to the other parts of $p$ (in other grids).

In the second equation a formal rectangle associated with $G$ is switched with the pseudo-domain $p_0$. The configuration is identical to the one in Figure~\ref{I1General4m6b}, and the same argument works here. 
In the third equation we use the S-3 property. In this equation we still do not have a component at $w_1$ so the rectangle $ABCD$ is not a Type 2 rectangle. In the forth equation, we expand the sign of the pseudo-domain $p_1$. 
In the fifth equation we use the identity of Remark~\ref{type2-sign}.
In the sixth equation the formal rectangle associated with $ABCD$ is disjoint from the pseudo-domain with the main piece $DEFGHH'$ and we switch them. 

\begin{eqnarray*}
\mu(p)\cdot \SS(r)
&=& \SS(BDF)\cdot \SS(AC) \cdot \mu(p_0) \cdot \SS(G) \\
&=& -\SS(BDF)\cdot \SS(AC)\cdot \SS(EG)\cdot \mu(p_1)\\
&=&\SS(F(ABCD))\cdot \SS(F)\cdot \SS(EG)\cdot \mu(p_1)\\
&=& \SS(F(ABCD))\cdot \SS(F)\cdot \SS(EG)\cdot \\
& & (-1)^{m-1}  \SS(F({R}_{1}^{1}=D))\cdot \SS(F({{ R}'}_{2}^{1}))\cdots  \SS(F({R}_{2m-1}^{1}))\\
&=& (-1)^{m-1} \SS(F(ABCD)) \cdot \SS(F({\overline{R}}_{1}^{1}=DEFG))\cdot \SS(F({{ R}'}_{2}^{1}))\cdots  \SS(F({R}_{2m-1}^{1}))\\
&=&\SS(F(ABCD)) \cdot \mu(p')\\
&=&- \mu(p') \cdot \SS(r').
\end{eqnarray*}

\begin{figure}[h]
\centerline{\includegraphics[scale=0.8]{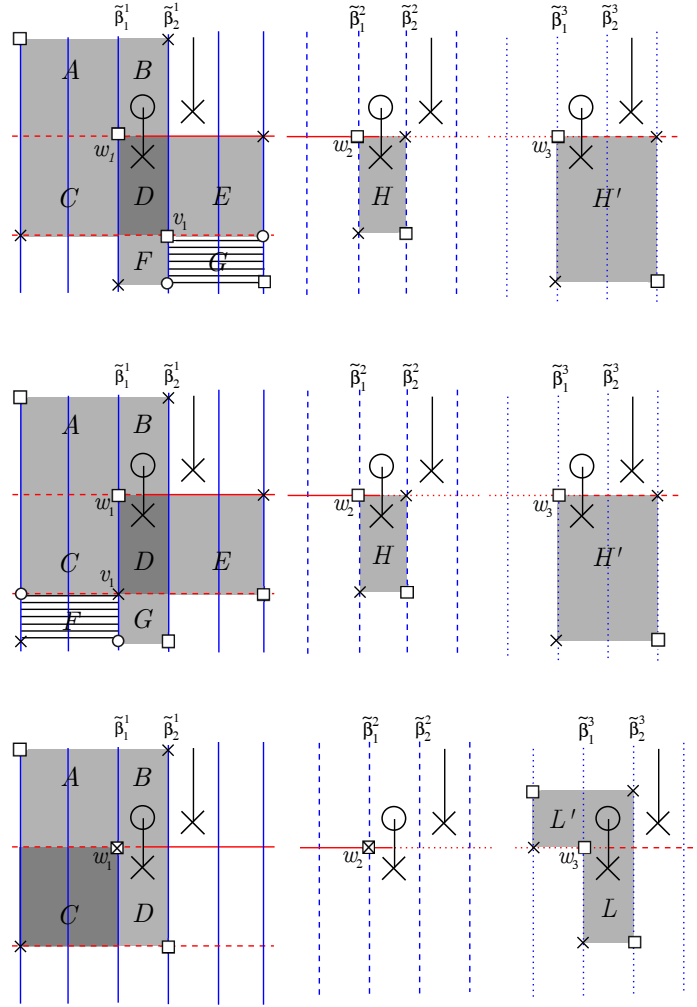}}
\caption {Cancellations of a type I(2) configuration with a type II(0) configuration.}
\label{II0General}
\end{figure}

In the second picture in Figure \ref{II0General}, $r*p=p'*r'$ where $r=F$ is a Type 1 rectangle, $p$ has the main peice $ABCDEG$, and the main piece of $p'$ is $DE$ and $r'=ABCDFG$ is a Type 2 rectangle.

In the second equality $ABCDFG$ is a rectangle that has a component of its initial generator at $v_1$. We used the identity of Remark~\ref{type2-sign} to compute its sign. In the Third equation, we switched the order of the two terms and $r'=ABCDFG$ is a Type 2 rectangle.

\begin{eqnarray*}
\SS(r) \cdot \mu(p)
&=&\SS(F)\cdot \SS(BDG)\cdot \SS(AC)\cdot \mu(p')\\
&=&\SS(F(ABCDFG))\cdot \mu(p')\\
&=&-\mu(p') \cdot \SS(r').
\end{eqnarray*}

In the third picture in Figure \ref{II0General}, $r*p=p'*r'$ where $r=C$, $p$ is of Type $L$ with $main(p)=ABD$, $r'=ABCD$ is a Type 2 rectangle and $p'$ has no intersection with $r'$ (in other words its main component is empty). Note that $C$ can only be counted as a rectangle in $\widetilde{H}$ because one of its edges has an arc on a lift of $\B_1$.

In the second equality we move the two terms $\SS(\hh_2) \cdot\cdots \SS(\hh_k)$ to the beginning. For this move, we used the S-3 property  an even number of times and the sign does not change.

\begin{eqnarray*}
\SS(r) \cdot \mu(p)
&=&\SS(C)\cdot \SS(BD)\cdot \SS(A) \cdot \SS(\hh_2) \cdot\cdots \SS(\hh_k)\\
&=&\SS(\hh_2) \cdot\cdots \SS(\hh_k) \cdot \SS(C)\cdot \SS(BD)\cdot \SS(A)\\
&=& \SS(\hh_2) \cdot\cdots \SS(\hh_k) \cdot \SS(ABCD)\\
&=& \mu(p')\cdot \SS(r').
\end{eqnarray*}

This completes the proof of all the possible cases that we described at the beginning of the proof. 

\end{proof}
 
\subsubsection{The $\QQ$ filtration}

In order to show that $F$ is a quasi-isomorphism, following the ideas of \cite{MOST}, we define a filtration.

Denote by $\S(\t H,k)$ the set of generators $\x\in S(\t H)$ with Alexander gradings equal to $k\in \Z$. Let $C(\t H,k)$ be the complex generated by elements of $\S(\t H,k)$. Since $\partial$ preserves the Alexander grading $C(\t H,k)$ is a sub-complex and a summand of $C(\t H)$.

Let $\Pi:\t H\rightarrow H$ be the $m$-sheeted branched cover map and $Q\subset H$ be the set consisting of one point in each square of $H$ other than those in the same row or the same column as $\Pi(O_1)$.
Let $\x,\y\in \S(\t H)$ and $p\in \sigma(\x,\y)$. The image of $p$ under $\Pi$ is a 2-chain (not necessarily a domain) in the Heegaard diagram $H$. We denote by $X_1(p)$ (respectively $O_1(p)$), the number of times that $\Pi(X_1)$ (respectively $\Pi(O_1)$) lies in $\Pi(p)$ counted with multiplicity.

We need the following lemmas from \cite{D}, although the context of that paper is for two sheeted coverings, but the proofs work verbatim in the $m$-sheeted case as well.

\begin{lemma}\cite[Lemma 3.16]{D}\label{2chain}
Let $D$ be a 2-chain (not necessarily a domain) in the Heegaard diagram $H$ that contains no basepoints, and $\partial D$ consists of $\A$- and $\B$-circles. Then $D$ is trivial. ($D$ might contain a basepoint in its interior, but the multiplicity of that point is zero.)
\end{lemma}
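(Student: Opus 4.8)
The plan is to exploit that the local multiplicity function of such a $D$ has a very restricted (bilinear) shape, and then to feed in both the vanishing at the markings and the connectedness of $K$.

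First I would record the shape of $D$. Write $\partial D=\sum_{i=1}^{n}a_i\A_i+\sum_{j=1}^{n}b_j\B_j$ with $a_i,b_j\in\Z$, index the $n^2$ squares of the grid $H$ by pairs $(i,j)\in(\Z/n)^2$, and let $m(i,j)$ be the multiplicity of $D$ on the square $(i,j)$. Since the coefficient of $\A_i$ in $\partial D$ is constant along the whole circle $\A_i$, the jump of $m$ when one crosses $\A_i$ is independent of the column; telescoping across the rows, and using that $H$ sits on a torus so that the total jump around a meridian vanishes, produces functions $f,g:\Z/n\to\Z$ with $m(i,j)=f(i)+g(j)$ for all $i,j$. (Equivalently: $\partial D$ being a combination of full $\A$- and $\B$-circles means $D$ is a periodic domain, hence a $\Z$-linear combination $\sum s_i A_i+\sum t_j B_j$ of the $n$ horizontal and $n$ vertical annuli of the grid torus, and such a combination has multiplicity $s_i+t_j$ on the square $(i,j)$.)

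Next I would use the hypothesis that $D$ has multiplicity zero at every marking. Let $\sigma_X,\sigma_O\in S_n$ be the permutations recording the marking positions: the $X$ (resp. $O$) in row $i$ lies in column $\sigma_X(i)$ (resp. $\sigma_O(i)$); these are genuine permutations because each row and each column of $G$ contains exactly one $X$ and one $O$. Vanishing of $m$ at the markings reads $f(i)+g(\sigma_X(i))=0$ and $f(i)+g(\sigma_O(i))=0$ for every $i$, and subtracting gives $g(\sigma_X(i))=g(\sigma_O(i))$ for all $i$; reindexing columns by $j=\sigma_X(i)$ yields $g(j)=g\bigl(\sigma_O\sigma_X^{-1}(j)\bigr)$ for all $j$, so $g$ is constant on the cycles of the permutation $\sigma_O\sigma_X^{-1}$. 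Now invoke connectedness: the link obtained from the grid $H$ has one component precisely because $\sigma_X^{-1}\sigma_O$ (equivalently its conjugate $\sigma_O\sigma_X^{-1}$) is an $n$-cycle, so $\sigma_O\sigma_X^{-1}$ acts transitively on $\Z/n$ and $g$ is a constant $\gamma_0$. Then $f\equiv-\gamma_0$ by either displayed equation, whence $m(i,j)=f(i)+g(j)=0$ for all $(i,j)$; that is, $D$ is trivial.

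The main obstacle is the first step's bookkeeping: verifying carefully, with a fixed orientation convention on the $\A$- and $\B$-curves, that $\partial D$ being a $\Z$-combination of full circles forces $m(i,j)=f(i)+g(j)$, and citing correctly the standard dictionary between the cycle structure of the marking permutations and the number of components of the associated link. Both facts are routine but need to be stated with care; once they are in place, the remainder is elementary linear algebra over $\Z$.
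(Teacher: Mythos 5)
The paper does not give its own proof of this lemma — it cites \cite[Lemma 3.16]{D} — so there is no in-paper argument to compare against. Your proof is correct and is essentially the standard one for periodic domains on grid diagrams.

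A few remarks on your write-up. The reduction to $m(i,j)=f(i)+g(j)$ is exactly right: $\partial D=\sum a_i\A_i+\sum b_j\B_j$ forces the jump of $m$ across $\A_i$ (resp.\ $\B_j$) to be the constant $a_i$ (resp.\ $b_j$), and since $m$ is a well-defined function on the torus the telescoped sums automatically satisfy the periodicity constraints $\sum a_i=\sum b_j=0$ — you do not need to ``use'' anything beyond the fact that $D$ is an honest $2$-chain, so the hedge about the meridian jump is really a consistency check, not an extra hypothesis. The heart of the matter, which you correctly isolate, is that the $2n$ vanishing conditions at the markings cut down the $(2n-1)$-dimensional space of functions $f(i)+g(j)$ to zero \emph{only when} $\sigma_O\sigma_X^{-1}$ is a single $n$-cycle, i.e.\ only when $K$ is a knot; for a multi-component link the lemma fails, with one parameter per link component. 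It is worth noting explicitly that the ``$D$ might contain a basepoint in its interior, but the multiplicity of that point is zero'' parenthetical in the statement is precisely the hypothesis $f(i)+g(\sigma_X(i))=f(i)+g(\sigma_O(i))=0$ you use. I see no gap; this is the argument one expects from the cited source.
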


For fixed $\x , \y \in \S(\t H,k)$, let $p, p' \in \sigma (\x , \y )$ be two pseudo-domains such that $O_1(p)=X_1(p)=O_1(p')=X_1(p')$ and there are no other basepoints inside them. The $2$-chain $\Pi(p)-\Pi(p')$ satisfies the conditions of Lemma~\ref{2chain}, therefore the $\Pi(p)-\Pi(p')$ is trivial. This implies that, counting with multiplicity, we have: $$\# (Q\cap \Pi(p))=\# (Q\cap \Pi(p')).$$

\begin{definition}
We call a pseudo-domain $p$ a \emph{Q-fine} pseudo-domain if its multiplicity at each base point except (possibly) at $X_1$ is zero. 
\end{definition}

\begin{lemma}\cite{D}
Let $Q$ be as above. There exist a function $\QQ$ such that
\begin{equation}\label{QQ}
\QQ(\x)-\QQ(\y)=\# (Q\cap \Pi(p))
\end{equation}
\noindent for all Q-fine pseudo-domains $p \in \sigma (\x, \y)$.
\end{lemma}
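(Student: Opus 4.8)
The plan is to define $\QQ$ on generators by fixing a base generator and ``integrating'' the quantity $\#(Q\cap \Pi(p))$ along pseudo-domains, then to check that the definition is independent of the choices made. Concretely, I would first observe that any two generators $\x,\y\in\S(\t H,k)$ in the same Alexander grading can be connected by a $Q$-fine pseudo-domain: indeed, Type $L$ and Type $R$ pseudo-domains together with empty rectangles (and the domains $\lambda_k$, whose multiplicity at every basepoint is $0$ except possibly at $X_1$ after the isotopy) generate enough connectivity on $\S(\t H)$; more simply, since $\partial$ preserves the Alexander grading and any two generators of the same grading are connected by a chain of empty rectangles together with periodic domains, one can arrange a pseudo-domain between them that avoids all basepoints except possibly $X_1$. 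Fix a generator $\x_0$ in each Alexander grading $k$, and for $\x\in\S(\t H,k)$ pick a $Q$-fine $p\in\sigma(\x_0,\x)$ and set $\QQ(\x):=\#(Q\cap\Pi(p))$, with $\QQ(\x_0)=0$.

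Next I would verify well-definedness. If $p,p'\in\sigma(\x_0,\x)$ are both $Q$-fine, then $p$ and $p'$ have the same multiplicity at every basepoint (namely $0$ everywhere except $X_1$, where by definition of $Q$-fine the multiplicities need not agree a priori — so here one invokes that the multiplicity at $X_1$ is also forced to agree, since $X_1(p)$ is determined by $\x_0$ and $\x$ via the local structure of the $\A$- and $\B$-curves near the branch cut, or alternatively one restricts to $Q$-fine pseudo-domains with $X_1$-multiplicity zero, which suffices). Then $\Pi(p)-\Pi(p')$ is a $2$-chain in $H$ with boundary on the $\A$- and $\B$-circles and no basepoints, so by Lemma~\ref{2chain} it is trivial, whence $\#(Q\cap\Pi(p))=\#(Q\cap\Pi(p'))$. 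This is exactly the computation already displayed in the excerpt just before the lemma, so the content is in place. Independence of the choice of $\x_0$ only shifts $\QQ$ by a constant on each Alexander grading, which is harmless.

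Finally I would check the defining relation~\eqref{QQ}. Given any $Q$-fine $p\in\sigma(\x,\y)$ with $\x,\y\in\S(\t H,k)$, choose $Q$-fine pseudo-domains $p_\x\in\sigma(\x_0,\x)$ and $p_\y\in\sigma(\x_0,\y)$. Then $p_\x * p\in\sigma(\x_0,\y)$ is again $Q$-fine (the $*$ operation adds multiplicities, so it introduces no new basepoints beyond $X_1$), so by well-definedness $\QQ(\y)=\#(Q\cap\Pi(p_\x*p))=\#(Q\cap\Pi(p_\x))+\#(Q\cap\Pi(p))=\QQ(\x)+\#(Q\cap\Pi(p))$, which rearranges to~\eqref{QQ}. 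The main obstacle is the bookkeeping around the basepoint $X_1$: one must be careful that ``$Q$-fine'' (which permits arbitrary multiplicity at $X_1$) still forces $\#(Q\cap\Pi(p))$ to depend only on endpoints, and the cleanest route is to note that $Q$ avoids the row and column of $\Pi(O_1)$ — hence avoids $\Pi(X_1)$ as well since $X_1$ is adjacent to $O_1$ — so the $X_1$-multiplicity is irrelevant to $\#(Q\cap\Pi(p))$ and Lemma~\ref{2chain} applies directly to $\Pi(p)-\Pi(p')$ after discarding the (zero-multiplicity-on-$Q$) contribution near $X_1$. Everything else is a routine additivity argument.
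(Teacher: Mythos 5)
The paper itself gives no proof for this lemma; it simply writes ``See \cite{D} page 35,'' so there is no in-text argument to compare against beyond the preamble paragraph that precedes the lemma. Your overall strategy — fix a base generator in each Alexander grading, define $\QQ$ by integrating $\#(Q\cap\Pi(\cdot))$ along a $Q$-fine pseudo-domain, and verify well-definedness by applying Lemma~\ref{2chain} to the difference $\Pi(p)-\Pi(p')$ — is the natural one, and it is consistent with what the preamble paragraph sets up. However, there are two genuine gaps that you flag but do not close.

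The first and more serious one is the $X_1$-multiplicity issue, and I do not think either of your proposed fixes works as stated. Your first suggestion, that $X_1(p)$ ``is determined by $\x_0$ and $\x$,'' is an unproved assertion and, I believe, false as a blanket claim for pseudo-domains: a pseudo-domain is allowed to have $\lambda_k$'s on its boundary, and adding/subtracting the small disk $d_{\lambda}$ bounded by $\lambda$ changes both $X_1(p)$ and $O_1(p)$ by one, so while the $Q$-fine constraint pins $O_1(p)=0$ there is no immediate reason $X_1(p)$ is pinned. Your second suggestion (restrict to $X_1$-multiplicity zero) is insufficient because the lemma is used later precisely for $4m$-gons, which contain $X_1$ with multiplicity one. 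Your third suggestion, ``discard the contribution near $X_1$,'' is the right instinct but the modification has to respect boundary conditions: the only cheap $2$-chains you can subtract near $X_1$ that avoid $Q$ are multiples of $d_{\lambda}$ (which introduces $\lambda$ into the boundary and alters $O_1$-multiplicity equally) or the full column through $O_1$ and $X_1$ (which again alters $O_1$ and $X_1$ equally). Killing the $X_1$-multiplicity without disturbing $O_1$ requires a $2$-chain whose boundary involves the $\A$-circle separating $O_1$ from $X_1$, and any such $\A$-circle bounds an entire row, which meets other basepoints and points of $Q$. The argument that resolves this requires more structure (e.g., chasing the alternating sequence of rows and columns through the markings, as in the construction of a grid Seifert surface) and should not be waved past.

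The second gap is the connectivity claim: that any two generators of the same Alexander grading are joined by a $Q$-fine pseudo-domain. You assert this (``one can arrange a pseudo-domain between them that avoids all basepoints except possibly $X_1$'') but give no justification. This is also a nontrivial combinatorial fact about $\t H$ that must be established, since the very definition of $\QQ(\x):=\#(Q\cap\Pi(p))$ presupposes the existence of such a $p$. Both of these points are presumably handled carefully on the cited page of \cite{D}, and they are exactly the places where a blind reconstruction needs to do real work.
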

\begin{proof}
See \cite{D} page 35.
\end{proof}

\begin{notation}
Using $\QQ$ we can define a filtration on $C(\t H,k)$. The boundary operator of the associated graded object counts those rectangles 
which do not contain any basepoints and does not contain any points of $Q$. We denote by $C_Q$ the associated graded object, and omit the index $k$ when there is no confusion. Similarly we use $C_Q^{\omega}$ for $\omega \in \I_m$ (see Notation~\ref{I-index}) to denote the associated module.

\end{notation}

In order to study the boundary map between different submodules and calculate the homology groups, we need a new definition. 

\begin{definition}
Given $\x, \y \in S(\t H)$ and an empty rectangle $r \in Rect(\x,\y)$ that has no points of $Q$ inside it. We call such rectangle $r$ an \emph{undone} rectangle with respect to $\x$ and a \emph{done} rectangle with respect to $\y$. See Fig.~\ref{CIJNGeneral}.

Each term in the $\partial\x$ is obtained by picking an undone rectangle $r_0 \in Rect(\x,\y)$. We refer to the generator $\y$, as \emph{the generator obtained from  $\x$ by using $r_0$}.
\end{definition}

\begin{lemma}\label{regular}
Let $\x \in C_Q^{\omega}$ with $\omega\in \I_m$. If $\omega\neq (I,I,\cdots,I)$ and $\omega \neq (J,J,\cdots,J)$ then there is at least one done or undone rectangles with respect to $\x$. We call such an element a \emph{regular} element, and the rest of generators are called special, i.e. those in $(I,I,\cdots,I)$ and $(J,J,\cdots,J)$.

\end{lemma}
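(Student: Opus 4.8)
The plan is to show that any generator whose type $\omega$ is neither $(I,I,\dots,I)$ nor $(J,J,\dots,J)$ admits a done or an undone rectangle by a direct combinatorial construction on the Heegaard diagram $\t H$. First I would observe that since $\omega$ is not constant, there exist indices $i$ and $i'$ such that the $i^{th}$ entry and the $i'^{th}$ entry of $\omega$ are different. Recalling Notation~\ref{I-index}, this means that among the components of $\x$ on the curves $\t\alpha_1^1,\dots,\t\alpha_1^m$, not all of them lie on lifts of $\B_1$ and not all of them lie on lifts of $\B_2$; in fact at least one of the following holds: some $\t\alpha_1^i$-component of $\x$ lies on a lift of $\B_1$ (entry $I$), or some $\t\alpha_1^{i'}$-component lies on a lift of $\B_2$ (entry $J$), or some entry is $N$. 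The key point is that the two columns bounded by $\B_1$ and $\B_2$ are the columns adjacent to the branch cut connecting $O_1$ and $X_1$, and by the normalization at the start of Section~\ref{stab} (using commutation so that $O_1$ is adjacent to the $X$ marking in its row, with the $X$ to the right), the region between consecutive lifts of $\B_1$ and $\B_2$ is a thin strip containing no basepoints except possibly $X_1$, and in particular no points of $Q$.

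Next I would build the rectangle explicitly. Suppose first that the $i^{th}$ entry of $\omega$ is $I$, so the $\t\alpha_1^i$-component of $\x$, call it $x_0$, lies on some lift $\t\B_1^j$. Consider moving $x_0$ horizontally to the right along $\t\alpha_1^i$ to the adjacent lift of $\B_2$; this sweeps out a width-one rectangle $r$ whose other two corners are a component of $\x$ and the target point. If that rectangle is empty (contains no $\x$-component in its interior) and contains no point of $Q$, it is the desired undone rectangle. If it is not empty, there is an $\x$-component in its interior, and one shrinks the rectangle accordingly — since the strip has width one, any $\x$-component in the interior must lie on one of the finitely many $\t\alpha$-curves crossing the strip, and choosing the closest such obstruction produces a genuinely empty width-one rectangle connecting $\x$ to some $\y$, still containing no point of $Q$ because the whole strip is $Q$-free. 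The case where some entry is $J$ is symmetric, moving left from a lift of $\B_2$ to the adjacent lift of $\B_1$. The case where some entry is $N$ is handled by first producing a rectangle that moves that $\t\alpha_1^i$-component toward $\B_1$ or $\B_2$, or by combining with an $I$ or $J$ entry which must also exist (since if all non-$N$ entries were, say, $I$, one still gets an $I$ entry to work with); in every subcase one isolates a $Q$-free width-one rectangle.

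The main obstacle I anticipate is the bookkeeping needed to guarantee that the constructed rectangle is \emph{empty} with respect to the correct generator and simultaneously \emph{$Q$-free}, across all the possible positions of the other components of $\x$ in the other $m-1$ grids and all the ways the relevant $\B$-curve lifts can wind through the cuts. The honest way to handle this is to phrase the whole argument within a single fixed width-one vertical strip of $\t H$ adjacent to the cut: in that strip the $\B$-curves $\t\B_1^l$ and $\t\B_2^l$ bound exactly the columns flanking the cut, no point of $Q$ lies in this strip by construction of $Q$, and the minimal-width rectangle from $\x$ toward this strip is automatically $Q$-fine; the only content is then to check that such a minimal rectangle exists, which is a finite search among the $\t\alpha$-curves. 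I would also note, as the excerpt's reference to Figure~\ref{CIJNGeneral} suggests, that drawing the three model pictures (one for an $I$ entry, one for a $J$ entry, one for an $N$ entry) makes the construction transparent, and the done/undone dichotomy is just the choice of reading the rectangle as emanating from $\x$ (undone) versus terminating at $\x$ (done), so producing either one suffices.
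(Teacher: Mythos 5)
Your proposal has a genuine gap, centered on where the basepoints actually lie. You assert that ``the region between consecutive lifts of $\B_1$ and $\B_2$ is a thin strip containing no basepoints except possibly $X_1$.'' This is not correct: that strip is precisely a lift of the column of $O_1$ in $H$, and by construction it contains \emph{both} $O_1$ and $X_1$, in the two squares immediately above and below $\A_1$. Recall that a done/undone rectangle must be a term in the $C_Q$ differential, so it must avoid \emph{all} basepoints (the definition of Q-fine pseudo-domains, which allows $X_1$, concerns the map $F$ and not the internal differential of $C_Q$). Consequently, the width-one rectangle you construct from an $I$-entry component $w_i=\t\alpha_1^i\cap\t\B_1^i$ into the $O_1$ column is automatically obstructed: since $w_i$ lies on $\A_1$ and $O_1$, $X_1$ occupy the two adjacent squares of that column, any such rectangle contains $O_1$ or $X_1$ in its interior. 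Your shrinking step cannot repair this, because the obstruction sits in the square at the corner where $w_i$ is, not somewhere that can be cut away by an intervening $\x$-component. The symmetric construction for a $J$-entry component $v_i=\t\alpha_1^i\cap\t\B_2^{\cdot}$ fails for the same reason, and the $N$ case is not actually argued but deferred to ``one isolates a $Q$-free width-one rectangle.''

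The hypothesis that $\omega$ is neither $(I,\dots,I)$ nor $(J,\dots,J)$ is used in a different way than you propose: it guarantees that some lift of $\B_1$ carries an $\x$-component which is \emph{not} $w_i$ (namely, on the grid(s) whose entry is $J$ or $N$), or symmetrically that some lift of $\B_2$ carries a component which is not $v_i$. These are components that sit on $\B_1$/$\B_2$-lifts but away from $\A_1$, hence away from the squares containing $O_1$ and $X_1$; the admissible width-one rectangles that the paper calls ``obvious from the definition'' (and displays in Figure~\ref{CIJNGeneral}) are built from \emph{those} components, not from $w_i$ or $v_i$. Your argument starts from exactly the wrong corners, which is why every rectangle it produces contains a basepoint and is therefore never admissible.
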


\begin{proof}
The proof is obvious from the definition.
\end{proof}

\begin{lemma} \label{Homology}
$H_*(C_Q)$ is isomorphic to free module generated by elements of type $(I,I,\cdots,I)$ and $(J,J,\cdots,J)$, where the ring of coefficients can be $\Z$ or $\Z_2$. 
\end{lemma}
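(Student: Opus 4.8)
*$H_*(C_Q)$ is isomorphic to the free module generated by elements of type $(I,I,\cdots,I)$ and $(J,J,\cdots,J)$.*

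\begin{proof}[Proof proposal]
The plan is to produce an explicit algebraic deformation retraction of $C_Q$ onto the submodule spanned by the special generators, by a standard "cancellation of a matched pair" argument run simultaneously over all regular generators. First I would set up the matching: for each regular generator $\x\in C_Q^\omega$ (so $\omega$ has at least one $I$ and at least one $J$ entry, by Lemma~\ref{regular}), I would canonically select a distinguished rectangle attached to $\x$. Concretely, look at the first coordinate $i$ (in a fixed cyclic order) where something can be "done" or "undone": if the $\t\alpha_1^i$-component of $\x$ sits on a lift of $\B_2$ but can be pushed onto a lift of $\B_1$ by an empty, $Q$-avoiding rectangle, declare $\x$ to be of \emph{done} type and let $r(\x)$ be that rectangle (pointing out of $\x$); otherwise $\x$ has such a component on a lift of $\B_1$ that can be pushed onto $\B_2$, declare $\x$ \emph{undone} and let $r(\x)$ be the corresponding rectangle into $\x$. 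The content of Lemma~\ref{regular} is precisely that this dichotomy covers every regular generator, and one checks the matching is an involution-free pairing $\x\leftrightarrow\x'$ with $r(\x)\in \mathrm{Rect}(\x',\x)$ or $\mathrm{Rect}(\x,\x')$ of sign $\pm 1$.

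Next I would invoke the standard cancellation lemma for chain complexes over $\Z$: if $\partial$ has a matrix entry $\pm 1$ between basis elements $\x',\x$, then the subquotient complex obtained by deleting both $\x'$ and $\x$ and correcting the differential by $\partial'(\b a)=\partial(\b a)\mp \langle\partial \b a,\x\rangle\,\partial\x'$ (with the chosen sign) is chain homotopy equivalent to $C_Q$. Since all the matched pairs involve only \emph{regular} generators, and one needs to cancel infinitely (well, finitely) many such pairs at once, I would instead phrase this as a single Gaussian elimination: order the regular generators compatibly with the $\QQ$-filtration, observe that $r(\x)$ is $Q$-fine so by \eqref{QQ} the distinguished rectangle strictly drops $\QQ$ (or one arranges it to, after noting $\partial$ on $C_Q$ only counts rectangles with no $Q$-points, which forces $\QQ$ to be non-increasing and to decrease exactly along done/undone rectangles), hence the "matching part" of $\partial$ is upper-triangular with $\pm1$'s on the relevant entries and is therefore invertible on the span of regular generators. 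A block-matrix computation then gives $H_*(C_Q)\cong$ the span of the special generators, with zero induced differential because there is no rectangle between two special generators that avoids $Q$ — any rectangle from an $(I,\dots,I)$ or $(J,\dots,J)$ generator either is a done/undone rectangle (landing in a regular generator) or must contain a point of $Q$.

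The main obstacle I expect is verifying that the matched part of $\partial$ is genuinely triangular and invertible, i.e.\ that the pairing $\x\mapsto r(\x)$ is well defined and that the resulting coefficient matrix, after the $\QQ$-ordering, has invertible diagonal blocks. This requires two checks. First, \emph{well-definedness and injectivity of the matching}: the distinguished rectangle $r(\x)$ must be uniquely determined and the generator $\x'$ at its other end must again be regular with $r(\x')=r(\x)$; this is a combinatorial verification using the structure of Levine's diagram near the stabilization column, and one uses that such an innermost rectangle cannot contain a point of $Q$ (it lies in the single square region between consecutive $\B$-lifts). Second, \emph{monotonicity under $\QQ$}: one must confirm that \eqref{QQ} forces every $Q$-avoiding, basepoint-free rectangle to be $\QQ$-non-increasing and strictly decreasing precisely on the matched rectangles, so that no matched pair is "hidden" inside a $\QQ$-level and the triangular structure is honest. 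Granting these, the cancellation is formal. I would remark that the argument is identical over $\Z$ and over $\Z_2$ since every cancelled entry is a unit $\pm 1$ in either ring, which is why the statement holds with either coefficient ring, and that the signs produced by the deformation retraction are recorded for later use (they will feed into the identification of the induced differential on the special generators in the subsequent propositions).
\end{proof}
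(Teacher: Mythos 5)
Your overall strategy — collapse $C_Q$ onto the special generators by an explicit algebraic matching/Gaussian elimination — is in the same family as the paper's argument, but there is a concrete error in the device you use to obtain triangularity, and it is not a fixable side remark: it is the crux of the step.

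You propose to order the regular generators by $\QQ$ and to claim that the matched (distinguished) rectangles strictly decrease $\QQ$, making the matched part of $\partial$ triangular with $\pm 1$ diagonal. But $C_Q$ is by definition the associated graded of the $\QQ$-filtration: its differential counts only $Q$-avoiding rectangles, and by Equation~\eqref{QQ} a $Q$-avoiding, basepoint-free rectangle satisfies $\QQ(\x)=\QQ(\y)$. In other words, \emph{every} rectangle seen by $\partial$ on $C_Q$ preserves $\QQ$; none can strictly decrease it. So the ordering you propose assigns the same $\QQ$-value to a generator and to everything appearing in its $C_Q$-boundary, and the ``monotonicity under $\QQ$'' check you list as your second obstacle is not merely unverified but false as stated. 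The paper gets triangularity from a different filtration entirely: the number of $J$-entries (Case 1) or $N$-entries (Case 2) in the $m$-tuple type $\omega\in\I_m$. That quantity is genuinely non-increasing under $\partial$ on $C_Q$, and passing to the subquotients $C_Q^{J,k}$ (resp.\ $C_Q^{N,k}$) is what isolates, at each level $k$, the orbits on which the induced differential is the exterior-algebra / simplex differential. Without replacing $\QQ$ by this $J$- (or $N$-) count, the Gaussian elimination never gets off the ground.

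Two further points. First, your matching rule picks, per generator, ``the first coordinate where something can be done or undone,'' but you have not checked that this is involutive: if $r(\x)\in\mathrm{Rect}(\x',\x)$ toggles coordinate $i$, doing or undoing $r(\x)$ changes the type at $i$ and can change which coordinate is ``first active'' for $\x'$, so $r(\x')$ need not equal $r(\x)$. The paper sidesteps this by not matching generators one-to-one at all; it instead groups them into orbits whose induced complex is the reduced simplicial chain complex of a simplex (hence acyclic over $\Z$ and $\Z_2$). If you do want a literal matching, the safe choice is the standard cone matching \emph{within each orbit} (toggle a fixed rectangle chosen once per orbit), which requires first identifying the orbit decomposition — at which point you have reproduced the paper's argument. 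Second, you do not address the case split on whether $X_2$ sits to the left or to the right of $O_1$; the two cases have different monotone quantities ($J$-count versus $N$-count) and different reasons that the special generators are closed, and your single uniform rule for choosing $r(\x)$ will not cover both.
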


\begin{proof}

There are two cases depending on whether $X_2$ is placed in the square, just to the left or just to the right of $O_1$, in the grid diagram $H$ for $K$. 

\textbf{Case (1):} Suppose $X_2$ is in the square just to the left of the square marked $O_1$. Let $\x \in C_Q^{\omega}$, and let $\y$ be in $\partial\x$. In this case if the $i^{th}$ entry of $\x$ is $I$ (resp. $N$), the $i^{th}$ entry of $\y$ remains the same, i.e. $I$ (resp. $N$). Also if the entry of $\x$ is $J$, in $\y$ that entry might remain $J$ or change to $N$. See Figure~\ref{CIJNGeneral}. 

\begin{figure}[h]
\centerline{\includegraphics[scale=0.8]{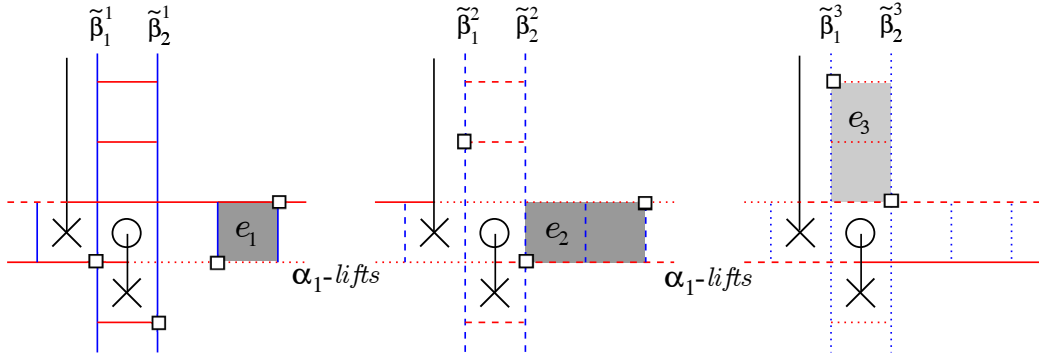}}
\caption {The hollow squares show a generator $\x \in {C_Q}^{IJN}$. The rectangles $e_1$ and $e_2 $ are undone with respect to $\x$. The rectangle $e_3$ is done with respect to $\x$.
Using rectangle 1 leads to a generator in ${C_Q}^{IJN}$. If we use rectangle 2 the result is a generator in ${C_Q}^{INN}$.
Note that
$\A_1^1$ is drawn with a solid line, $\A_1^2$ is drawn with a dashed line and
$\A_1^3$ is drawn with a dotted line.}
\label{CIJNGeneral}
\end{figure}

Let $1 \leq k \leq m$ be an integer, by the above argument we have the following exact sequence:

\begin{center}
\exact{C_Q^{J,\leq k-1}}{C_Q^{J,\leq k}}{C_Q^{J,k}} .
\end{center}

By lemma \ref{regular} for any regular element $\x$ in ${C_Q^{J,k}}$ there exist at least one done or undone rectangle with respect to $\x$. As an element in the quotient complex ${C_Q^{J,k}}$, the boundary of $\x$ consists of those $\y$ that are obtained from $\x$ by using an undone rectangle.

In the Heegaard diagram in Fig.~\ref{CIJNGeneral}, there are exactly two disjoint undone rectangles and one done rectangle, with respect to $\x$. We represent $\x$ by its undone rectangles, $e_1\wedge e_2$.
In each term in $\partial \x$ in ${C_Q}$, one of these undone rectangles (with respect to $\x$) will be counted. For example, we denote by $ e_1$, the term that comes from using $e_2$. In other words, it represents a generator $\y$ such that $e_1$ is the only undone rectangle with respect to $\y$. Using the sign assignment we get the term $e_1$ with the sign $s_2$ of the formal rectangle associated to $e_2$ with the initial generator $\x$.

Hence $\partial \x$ in ${C_Q}$ can be represented by the following notation (we call this the induced boundary operator):

$$\partial  (e_1\wedge e_2) = s_1\cdot e_2 + s_2\cdot e_1.$$

It is not hard to see that since the sign assignment has the property S-3, with an appropriate gauge transform, we can change the sign assignment such that we have:
$$\partial (e_{n_1}\wedge \cdots \wedge e_{n_k})= \displaystyle \sum (-1)^{i}  (e_{n_1}\wedge \cdots \wedge \widehat{e_{n_i}}\wedge \cdots\wedge e_{n_k}) .$$

One can see that each generator in ${C_Q}$ can have at most $2m$ undone rectangles. Given a generator $\x$, we construct a set of generators associated with $\x$. This set consists of the generators obtained from $\x$ using a number of undone rectangles, and also those generators that $\x$ is obtained from them by using a number of undone rectangles. By the above notation, this set with the induced boundary operator from $C_Q$ is isomorphic to an exterior algebra over a vector space of dimension at most $2m$ with coefficients in $\Z$.

In the above example, the induced boundary map after the required change of basis (Using S-3) is represented as follows: 
\[
\partial (e_1 \wedge e_2) = e_1 - e_2
\]

This shows that we can partition our complex as a union of subcomplexes, each isomorphic to the exterior algebra over a vector space with coefficients in $\Z$. Since this complex is isomorphic to the complex of the reduced homology of an $n$-simplex over $\Z$, its homology is trivial.

Hence we get a decomposition of regular elements of $C_Q^{J,k}$ as the direct sum of terms with zero homology. So the homology of $C_Q^{J,k}$ is generated by the special element. Therefore in order to complete the proof we have to show that for any special element $\x$ we have $\partial \x =0$. We consider two cases separately:

\begin{enumerate}[(i)]
\item Let $\x \in (I,I,\cdots,I)$ then by definition of empty rectangles $\partial \x =0$.
\item Let $\x \in (J,J,\cdots,J)$, then any elements in the $\x$ has fewer number of $J$'s, hence as an element of the quotient complex is zero.
\end{enumerate}

\textbf{Case (2):} Suppose that $X_2$ is just to the right of $O_1$, in the grid diagram $H$ for $K$. Let $\x \in C_Q^{\omega}$, and let $\y$ be in $\partial\x$. In this case if the $i^{th}$ entry of $\x$ is $I$ (respectively $J$), the $i^{th}$ entry of $\y$ remains the same, i.e. $I$ (resp. $J$). Also if the entry of $\x$ is $N$, in $\y$ that entry might remain $N$ or change to $I$. See Figure~\ref{CIJNGeneralRight}.

\begin{figure}[h]
\centerline{\includegraphics[scale=0.8]{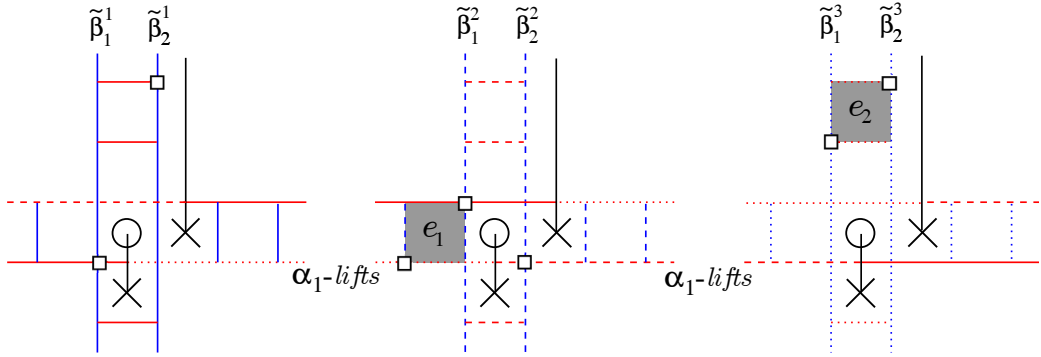}}
\caption {The hollow squares show a generator $\x \in {C_Q}^{IJN}$. We count each of the rectangles shown above in the boundary map. Using rectangles 1 leads to a generator in ${C_Q}^{IJI}$. Using rectangle 2 results in a generator in ${C_Q}^{IJN}$.}
\label{CIJNGeneralRight}
\end{figure}

Let $1 \leq k \leq m$ be an integer, by the above argument we have the following exact sequence:

\begin{center}
\exact{C_Q^{N,\leq k-1}}{C_Q^{N,\leq k}}{C_Q^{N,k}} .
\end{center}

The same argument as in case (1) shows that the homology of regular terms in $C_Q^{N,k}$ vanishes. Hence we have to check that boundary of any special element is zero. In this case the boundary of any special element is trivially zero, since there are no empty rectangle staring from such an element.

\end{proof}

\begin{proposition} \label{quasi}
The map $F$ is a filtered quasi-isomorphism.
\end{proposition}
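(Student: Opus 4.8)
## Proof proposal for Proposition~\ref{quasi}

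The plan is to compare the associated graded objects of the $\QQ$-filtrations on source and target and to show that $F$ induces an isomorphism on these graded pieces; since a filtered map inducing an isomorphism on associated graded objects is a filtered quasi-isomorphism, this suffices. First I would equip $C(\t H,k)$ with the filtration induced by $\QQ$, and give $C(\t G)[1]\oplus C(\t G)$ the filtration pulled back through the identification $C^{I,m}_Q \cong C(\t G)$ together with its own Alexander grading; one checks directly from Definition~\ref{F-def} that $F$ is filtered, i.e. that $\QQ(\x) \le \QQ(\y)$ (up to the appropriate shift in the two summands) whenever $\mu(p)\neq 0$ for some $p\in\sigma^F(\x,\y)$. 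Concretely, a Type $L$ pseudo-domain contributes to the degree-$0$ summand without raising the filtration level, while a Type $R$ pseudo-domain necessarily passes through the $4m$-gon, which picks up the extra point of $Q$ corresponding to the square containing $X_1$, landing it in the shifted summand $C(\t G)[1]$. This is where the precise bookkeeping of $X_1(p)$, $O_1(p)$ and $\#(Q\cap\Pi(p))$ from the lemmas preceding the statement is used.

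Next I would pass to the associated graded map $\mathrm{gr}(F) : \mathrm{gr}\, C(\t H,k) \to C(\t G)[1]\oplus C(\t G)$. By Lemma~\ref{Homology} the homology of $C_Q$ is freely generated by the special generators, i.e. those of type $(I,\dots,I)$ and those of type $(J,\dots,J)$; correspondingly $H_*(\mathrm{gr}\,C(\t H,k))$ is identified with $C^{I,m}_Q \oplus C^{J,m}_Q$. Under $\psi$ of Remark~\ref{psi-remark}, the summand $C^{I,m}_Q$ maps identically to $C(\t G)$ — these are exactly the generators of $\t G$ with the $w_i$'s adjoined, and the degree-$0$ component $F^L$ restricted to them counts only the trivial domain, hence is the identity. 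For the $(J,\dots,J)$ summand, one checks that there is a distinguished Type $R$ pseudo-domain (built from a chain of punctured hexagons followed by a $4m$-gon) carrying each $(J,\dots,J)$ generator isomorphically onto the corresponding $(I,\dots,I)$ generator in the shifted summand $C(\t G)[1]$, with sign $\pm 1$ by the sign computations of Lemma~\ref{part1} and Lemma~\ref{indep-lemma}; this identifies $F^R$ on $(J,\dots,J)$-generators with an isomorphism onto $C(\t G)[1]$. Hence $\mathrm{gr}(F)$ is an isomorphism on homology, so $F$ is a quasi-isomorphism on each associated graded piece and therefore a filtered quasi-isomorphism.

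The main obstacle I expect is the sign bookkeeping in identifying $F^R$ on the $(J,\dots,J)$ sector with an honest isomorphism: one must verify that the distinguished Type $R$ pseudo-domain attached to a $(J,\dots,J)$ generator is essentially unique modulo the exterior-algebra cancellations exploited in the proof of Lemma~\ref{Homology}, and that the sign $\mu(p)$ is consistent across generators — i.e. after the gauge transformation used in Lemma~\ref{Homology} to normalize the induced boundary operator, $F^R$ really is $\pm\mathrm{id}$ rather than merely a sign-indefinite bijection. This requires invoking Lemma~\ref{indep-lemma} (independence of the chosen finger move) to make the sign of the $4m$-gon well defined, together with the $(S\text{-}3)$ property to commute the disjoint punctured hexagons past one another, exactly as in the case analysis of Lemma~\ref{part1}. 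Once the two special sectors are matched, the fact that $F$ respects the differentials $\partial$ and $(-\partial,\partial)$ on the nose (Lemma~\ref{part1}) upgrades the graded-level isomorphism to the filtered statement, completing the proof.
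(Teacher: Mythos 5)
Your proposal follows essentially the same strategy as the paper: show $F$ is filtered with respect to the $\QQ$-filtration, pass to the associated graded map $F_Q$, invoke Lemma~\ref{Homology} to reduce to the special sectors $(I,\dots,I)$ and $(J,\dots,J)$, and show $F_Q^L$ (resp.\ $F_Q^R$) restricted to $(I,\dots,I)$ (resp.\ $(J,\dots,J)$) is an isomorphism via the trivial domain (resp.\ the unique $4m$-gon). This is the same proof.

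Two details in your write-up are off, though neither breaks the structure. First, the square containing $X_1$ lies in the same column as $O_1$, so by the definition of $Q$ it carries no $Q$-point; a $4m$-gon supported in the row and column through $O_1$ therefore does \emph{not} ``pick up an extra point of $Q$.'' The shift into $C(\t G)[1]$ is a matter of definition of $F$ (Type $R$ contributes to the $[1]$-summand), not a consequence of the $\QQ$-filtration jumping; the filtration statement one needs is only that each constituent piece (complementary rectangle of a punctured rectangle, $4m$-gon, empty rectangle) does not \emph{increase} $\QQ$. Second, for the $(J,\dots,J)$ sector the only Type~$R$ pseudo-domains surviving in $C_Q$ are bare $4m$-gons, with no chain of punctured hexagons preceding them: the rectangle factor $r$ in a punctured hexagon $\hh=\a*r$ always meets a point of $Q$, so such domains vanish in the associated graded object. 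Once these are corrected, the remaining discussion (uniqueness of the $4m$-gon, sign $\pm 1$ via Lemma~\ref{indep-lemma} and the gauge normalization of Lemma~\ref{Homology}) matches the paper.
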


Note that in order to define the function $\QQ$ for a generator $\x$ of $C(\t G)$, we consider $\psi(\x) \in C(\t H)$ and define $\QQ(\x):= \QQ(\psi(\x))$. For an element $(\x_1,\cdots,\x_m) \in C'$ we define $$\QQ(\x_0,\x_1):=max\{\QQ(\x_1),\cdots,\QQ(\x_m) \}.$$
 
\begin{proof}
First we show that $F$ preserve the $Q$-filtration. Let $\x$ be a generator of $C(\t H)$. We want to show $\QQ(\x) \geq \QQ(F(\x))$. So we have to show that $\QQ(\x) \geq \QQ(F^L(\x))$ and $\QQ(\x) \geq \QQ(F^R(\x))$. 

We decompose a pseudo-domain of type $L$ or $R$ as the $*$ of a number of punctured rectangles, $4m$-gons that contain $X_1$ and empty rectangles.

Let $\mathbf{u}, \mathbf{v}\in \S(\t H)$ such that there is a punctured rectangle $\a \in A(\mathbf{u} , \mathbf{v})$, then the image of the complementary rectangle $r_{\a}\in Rect(\mathbf{v}, \mathbf{u})$ under $\Pi$ is empty from the points in $Q$. Hence by Equation~\ref{QQ} we have $\QQ(\mathbf{u})=\QQ(\mathbf{v})$.
For $\mathbf{w}, \mathbf{z}\in \S(\t H)$ if there is a $4m$-gons that contains $X_1$ or an empty rectangle from $\mathbf{w}$ to $\mathbf{z}$, from Equation~\ref{QQ} we have $\QQ(\bm w) \geq \QQ(\bm z)$. Hence $\QQ(\x) \geq \QQ(F(\x))$ i.e. $F$ preserves the $Q$-filtration.

Let $C_Q$ (resp. $C'_Q$) be the associated graded object of $C(\t H)$ (resp. $C(\t G)[1]\oplus C(\t G)$). We consider the map induced by $F$ on the filtered objects.
\[F_Q:{C_Q} \longrightarrow {C'_Q} \]

By Lemma~\ref{Homology}, the homology of ${C_Q}$ is carried by the subcomplex ${C_Q}^{(I,\cdots,I)}\oplus{C_Q}^{(J,\cdots,J)}\subset{C_Q}$. So we consider the restriction of $F_Q$ to this subcomplex, and will show that it induces an isomorphism. 

Note that since we consider the induced map on the filtered objects the only pseudo-domains that contribute to $\partial$, are those that do not have intersection with the dots in $Q$.

Let $\x$ be a generator in ${C_Q}^{(I,\cdots,I)}$. Since $F(\x)$ ends up in $(I,\cdots,I)$, the only pseudo-domain that contributes to $F(\x)$ is the trivial domain of Type $L$. Hence $F_Q^L$ restricted to $C_Q^{(I,\cdots,I)}$ is an isomorphism.

The restriction of $F_Q^R$ to ${C_Q}^{(J,\cdots,J)}$, counts pseudo-domains of Type $R$ which are supported in the lift of the row and the column through $O_1$. Therefore we only count $4m$-gons, from a generator of type $(J,\cdots,J)$ to a generator of type $(I,\cdots,I)$.
But for each element in $(J,\cdots,J)$ there is a unique $4m$-gon with this property. Thus $F_Q^R$ restricted to $C_Q^{(J,\cdots,J)}$ is an isomorphism.
\end{proof}

Now we need the following algebraic lemma, for a proof see Theorem 3.2 from \cite{Mc}.

\begin{lemma} \label{nice}
Suppose that $F:C\longrightarrow C'$ is a filtered chain map which induces an isomorphism on the homology of the associated graded object. Then $F$ is a quasi-isomorphism. 
\end{lemma}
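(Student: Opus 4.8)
The plan is to deduce this from the standard comparison principle for filtered complexes, implemented as a five-lemma induction along the filtration. (Strictly, the statement requires the filtrations to be bounded; in the situation where we apply it this is automatic, since the Heegaard diagrams are finite, so $C(\t H,k)$ and $C(\t G)[1]\oplus C(\t G)$ are finitely generated over $\Z$ and the $\QQ$-filtration takes only finitely many values.)

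Write the filtration on $C$ as $0 = F_{p_0}C \subseteq F_{p_0+1}C \subseteq \cdots \subseteq F_{p_1}C = C$, and analogously for $C'$; since $F$ is filtered we have $F(F_pC)\subseteq F_pC'$ for all $p$. For each $p$ there is a short exact sequence of chain complexes
\[
0 \longrightarrow F_{p-1}C \longrightarrow F_p C \longrightarrow \mathrm{gr}_p\, C \longrightarrow 0,
\]
where $\mathrm{gr}_p\, C = F_pC/F_{p-1}C$ is the $p$-th graded piece of the associated graded object, and $F$ carries this sequence to the corresponding one for $C'$. Passing to homology, $F$ induces a morphism between the long exact sequence in homology of $(F_{p-1}C,\,F_pC,\,\mathrm{gr}_p\,C)$ and that of $(F_{p-1}C',\,F_pC',\,\mathrm{gr}_p\,C')$ which commutes with all maps, including the connecting homomorphisms. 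By hypothesis the vertical arrows $H_*(\mathrm{gr}_p\,C)\to H_*(\mathrm{gr}_p\,C')$ are isomorphisms for every $p$ — their direct sum over $p$ being exactly the induced map on the homology of the associated graded object established in Proposition~\ref{quasi}.

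Next I would induct on $p$. The base case $p=p_0$ is trivial because $F_{p_0}C = F_{p_0}C' = 0$. Assuming $F$ induces an isomorphism $H_*(F_{p-1}C)\to H_*(F_{p-1}C')$, apply the five lemma to the morphism of long exact sequences above: four of the five relevant vertical maps (the two on $H_*(F_{p-1})$ in adjacent degrees and the two on $H_*(\mathrm{gr}_p)$) are isomorphisms, which forces $H_*(F_pC)\to H_*(F_pC')$ to be an isomorphism as well. Since the filtration is bounded above, taking $p=p_1$ yields that $F\colon H_*(C)\to H_*(C')$ is an isomorphism, i.e. $F$ is a quasi-isomorphism. (Equivalently, one can phrase the argument via spectral sequences: $F$ induces a morphism of the two bounded spectral sequences which is an isomorphism on the $E^1$ page, hence on every $E^r$ and on $E^\infty$; a bounded spectral sequence converges to the homology of the total complex, so the homologies agree.)

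There is no genuinely hard step here; the only point requiring care — the "main obstacle" in a bookkeeping sense — is the boundedness of the $\QQ$-filtration, which is precisely what gives the induction both a bottom and a top and makes the spectral sequence converge; without some such finiteness hypothesis the statement is false. In our setting boundedness holds for the reasons noted above, and the fact that $F$ respects the filtration was checked in the course of the proof of Proposition~\ref{quasi}. Granting these, the five-lemma induction goes through with no further computation.
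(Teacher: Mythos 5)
Your proof is correct, and it actually supplies an argument where the paper itself gives none: the paper's entire ``proof'' of this lemma is the citation ``see Theorem~3.2 of \cite{Mc},'' which is the comparison theorem for bounded filtered complexes (a filtered chain map inducing an isomorphism on the $E^1$ page of the associated spectral sequence is a quasi-isomorphism). Your five-lemma induction along the filtration proves the same statement directly without ever assembling the spectral sequence, so the two routes buy different things: the paper's citation is compact and signals the standard machinery, while your induction is self-contained, uses nothing beyond the long exact homology sequence of a pair of complexes plus the five lemma, and makes explicit exactly where boundedness enters (it supplies both the base case $F_{p_0}C=0$ and the terminal step $F_{p_1}C=C$). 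Your observation that boundedness is a genuine hypothesis --- and that it holds here because the $\QQ$-filtration on the finitely generated complexes $C(\t H,k)$ and $C(\t G)[1]\oplus C(\t G)$ takes only finitely many values --- is precisely the hypothesis McCleary's theorem needs and the paper leaves tacit, so spelling it out is a real improvement rather than padding. One small presentational suggestion: when applying the five lemma, display the five-term window with $H_n(F_pC)$ in the middle, namely $H_{n+1}(\mathrm{gr}_p C)\to H_n(F_{p-1}C)\to H_n(F_pC)\to H_n(\mathrm{gr}_p C)\to H_{n-1}(F_{p-1}C)$, so the reader sees at a glance that the outer four vertical arrows are controlled by hypothesis (positions $1$ and $4$) and by induction (positions $2$ and $5$); your prose already says this correctly, but the displayed window makes the bookkeeping transparent.
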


\begin{proposition}\label{quasi-final}

The map $F$ is a quasi-isomorphism.
\end{proposition}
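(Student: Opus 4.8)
The plan is to obtain Proposition~\ref{quasi-final} as an immediate consequence of Proposition~\ref{quasi} together with the algebraic Lemma~\ref{nice}. First I would set up the bookkeeping: by Lemma~\ref{part1} the map $F$ is a chain map over $\Z$ that preserves the Alexander grading, so it splits as a direct sum, over $k\in\Z$, of chain maps $F_k$ from $C(\t H,k)$ to the Alexander-grading-$k$ summand of $C(\t G)[1]\oplus C(\t G)$; it therefore suffices to prove that each $F_k$ is a quasi-isomorphism. Since each $C(\t H,k)$ is a finitely generated free $\Z$-module, the $\QQ$-filtration restricted to it has only finitely many nonzero levels, so the filtration is bounded and the hypotheses of Lemma~\ref{nice} can be verified on this summand.

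Next I would invoke Proposition~\ref{quasi}: there it is shown that $F$ is a filtered chain map for the $\QQ$-filtration and that the induced map $F_Q$ on the associated graded complexes has a controllable behavior. Concretely, by Lemma~\ref{Homology} the homology of $C_Q$ is carried entirely by the summand $C_Q^{(I,\dots,I)}\oplus C_Q^{(J,\dots,J)}$, and the computation in the proof of Proposition~\ref{quasi} shows both that $F_Q$ restricts to an isomorphism from this summand onto $C'_Q$ and that the complementary summand of $C_Q$ is acyclic. Combining these two facts, $F_Q$ induces an isomorphism on the homology of the associated graded objects in every Alexander grading.

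Finally, applying Lemma~\ref{nice} (Theorem~3.2 of \cite{Mc}) to each $F_k$ gives that $F_k$, and hence $F$, is a quasi-isomorphism. I do not expect a real obstacle in this step, as all the substantive content lies in Lemma~\ref{part1}, Lemma~\ref{Homology}, and Proposition~\ref{quasi}; the only things to check here are that $F$ respects the Alexander-graded splitting (so one may argue grading by grading), that the $\QQ$-filtration on each $C(\t H,k)$ is bounded (so the filtered comparison argument is valid over $\Z$), and that the surjectivity-and-acyclicity statements from Proposition~\ref{quasi} assemble into the isomorphism on associated-graded homology demanded by Lemma~\ref{nice}. One could also remark in passing that, since $H_*(C(\t G)[1]\oplus C(\t G))\cong H_*(C(\t G))\otimes(\Z\oplus\Z)$, this quasi-isomorphism yields precisely the invariance of the stable combinatorial knot Floer homology under stabilization.
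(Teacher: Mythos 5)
Your proposal is essentially identical to the paper's proof: both arguments deduce the result by combining Proposition~\ref{quasi} (that $F$ is a filtered chain map inducing an isomorphism on homology of the associated graded objects for the $\QQ$-filtration) with the algebraic Lemma~\ref{nice}. The extra remarks you include about the Alexander-graded splitting, boundedness of the filtration, and the identification $H_*(C(\t G)[1]\oplus C(\t G))\cong H_*(C(\t G))\otimes(\Z\oplus\Z)$ are correct and make the deduction a bit more explicit, but do not change the route.
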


\begin{proof}
Use Lemma~\ref{nice} and Proposition~\ref{quasi} for the $\QQ$-filtration to conclude that $F$ is a quasi-isomorphism.  
\end{proof}

This completes the proof of the fact that $H(C)\cong H(C')\cong H(B)\otimes V$, where $V \cong \Z \oplus \Z$ with generators in gradings $0$ and $-1$.


\end{document}